\newcommand{\HH}{\mathbb{H}}
\newcommand{\R}{\mathbb{R}}
\newcommand{\cH}{\mathcal{H}}
\newcommand{\cL}{\mathcal{L}}
\newcommand{\cN}{\mathcal{N}}
\newcommand{\cS}{\mathcal{S}}
\newcommand{\Kor}{\mathsf{Kor}}
\renewcommand{\epsilon}{\varepsilon}
\newcommand{\ud}[0]{\,\mathrm{d}}
\newcommand{\0}{\mathbf{0}}
\newcommand{\COW}{C^1_{\mathbb{H}}}
\DeclareMathOperator{\II}{II}
\DeclareMathOperator{\rot}{rot}
\DeclareMathOperator{\area}{area}
\DeclareMathOperator{\slope}{slope}
\DeclareMathOperator{\BP}{BP}
\def\resetMathstrut@{%
  \setbox\z@\hbox{%
    \mathchardef\@tempa\mathcode`\(\relax
    \def\@tempb##1"##2##3{\the\textfont"##3\char"}%
    \expandafter\@tempb\meaning\@tempa \relax
  }%
  \ht\Mathstrutbox@1.2\ht\z@ \dp\Mathstrutbox@1.2\dp\z@
}
\newcommand{\one}{\mathbf{1}}
\newcommand{\from}{\colon}
\newcommand{\symdiff}{\mathbin{\triangle}}
\renewcommand{\mid}{:}
\renewcommand{\ge}{\geqslant}
\renewcommand{\le}{\leqslant}
\renewcommand{\setminus}{\smallsetminus}
\renewcommand{\emptyset}{\varnothing}
\DeclareMathOperator{\id}{id}
\DeclareMathOperator{\Per}{Per}
\DeclareMathOperator{\Lip}{Lip}
\let\div\relax
\DeclareMathOperator{\div}{div}
\newtheorem{thm}{Theorem}[section]
\newtheorem{lemma}[thm]{Lemma}
\newtheorem{prop}[thm]{Proposition}
\newtheorem{cor}[thm]{Corollary}
\theoremstyle{remark}
\newtheorem{definition}[thm]{Definition}
\newtheorem{remark}[thm]{Remark}
\title{Area-minimizing ruled graphs and the Bernstein problem in the Heisenberg group}
\author{Robert Young}
\address{New York University, Courant Institute of Mathematical Sciences, 251 Mercer Street, New York, NY 10012, USA.}
\email{ryoung@cims.nyu.edu}
\begin{document}

\begin{abstract}
  In this paper, we give a necessary and sufficient condition for a graphical strip in the Heisenberg group $\HH$ to be area-minimizing in the slab $\{-1<x<1\}$. We show that our condition is necessary by introducing a family of deformations of graphical strips based on varying a vertical curve. We show that it is sufficient by showing that strips satisfying the condition have monotone epigraphs. We use this condition to show that any area-minimizing ruled entire intrinsic graph in the Heisenberg group is a vertical plane and to find a boundary curve that admits uncountably many fillings by area-minimizing surfaces.
\end{abstract}

\maketitle

\section{Introduction}

Bernstein's theorem states that an entire area-minimizing codimension--$1$ graph in $\R^n$ is a plane when $n\le 8$. Analogues of Bernstein's theorem hold for many classes of surfaces in the Heisenberg group $\HH$. For example, area-minimizing intrinsic graphs of entire $C^2$ functions are vertical planes \cite{BASCV}, as are area-minimizing intrinsic graphs of entire $C^1$ functions \cite{GRBernstein} and locally Lipschitz functions \cite{NSCBernstein}. The problem of determining which classes of graphs satisfy an analogue of Bernstein's theorem is known as the Bernstein problem; for a survey of the Bernstein problem in $\HH$, see \cite{SCVBernstein}. A key difficulty in proving analogues of Bernstein's theorem in $\HH$ is finding appropriate classes of variations of surfaces. In this paper, we introduce new variations of ruled surfaces and use them to solve the Bernstein problem for ruled entire intrinsic graphs.

We briefly recall some terminology. Let $\HH$ be the three-dimensional Heisenberg group; we identify $\HH$ with $\R^3$ with the multiplication
\begin{equation}\label{eq:heisMult}
  (x,y,z)\cdot (x',y',z')=\left(x+x',y+y',z+z'+\frac{xy'-yx'}{2}\right).
\end{equation}
Let $X=(1,0,0)$, $Y=(0,1,0)$, and $Z=(0,0,1)$; since a nilpotent Lie group can be identified with its Lie algebra via the exponential map, we also refer to the corresponding left-invariant vector fields as $X_{(x,y,z)}=(1,0,-\frac{y}{2})$, $Y_{(x,y,z)}=(0,1,\frac{x}{2})$, and $Z_{(x,y,z)}=(0,0,1)$.
These fields generate $1$--parameter subgroups and we denote the elements of these subgroups by $X^t=(t,0,0)$, $Y^t=(0,t,0)$, and $Z^t=(0,0,t)$ for $t\in \R$. Let $V_0=\{(x,0,z)\in \HH\}$ be the $xz$--plane. For a subset $U\subset V_0$ and a function $f\from U\to \R$, we define the intrinsic graph of $f$ to be the set
$$\Gamma_f=\{uY^{f(u)}\mid u\in U\}.$$
If $U=V_0$, we call $f$ an \emph{entire} intrinsic graph. There is a corresponding intrinsic projection $\Pi\from \HH\to V_0$ given by $\Pi(p)=pY^{-y(p)}$, where $y(p)$ is the $y$--coordinate of $p$.

For any $p\in\HH$, we call the plane spanned by $X_p$ and $Y_p$ at $p$ the \emph{horizontal plane} centered at $p$. Let $\mathsf{A}=\{(x,y,0)\mid x,y\in \R\}$ be the horizontal plane centered at $\0$. For any $p\in \HH$ and any $v\in \mathsf{A}$, let $\langle v \rangle$ be the span of $v$; this is a one-parameter subgroup of $\HH$ and we call the coset $p\langle v\rangle$ a \emph{horizontal line}.

One of the main steps in the proofs of the analogues of Bernstein's theorem for $C^2$, $C^1$, and Lipschitz graphs is to calculate the first variation of the area and show that a graph that is area-stationary (i.e., a critical point of the area) is a ruled surface. A \emph{ruled surface} in $\HH$ is a surface $\Sigma\subset \HH$ that can be written as a union of horizontal line segments (\emph{rulings}) with endpoints in $\partial\Sigma$. One then calculates the second variation of the area of $\Sigma$ and shows that if $\Sigma$ is a ruled entire intrinsic graph which is area-stable (i.e., a second-order minimum of area), then $\Sigma$ is a vertical plane.

This approach works well for surfaces that are smooth or regular with respect to the Riemannian structure on $\HH$, but there are important classes of surfaces in $\HH$ that are regular with respect to the sub-Riemannian structure but not the Riemannian structure. Once such class is defined in terms of the intrinsic gradient. 
Given a continuous function $f\from U\to \R$, we define $\nabla_f$ to be the vector field $\nabla_f=X-fZ$ on $U$. This is the push-forward of $X$ under the intrinsic projection from $\Gamma_f$ to $V_0$, i.e., $\nabla_f=(\Pi|_{\Gamma_f})_*(X)$. When $f$ is $C^1$, we can define the \emph{intrinsic gradient} of $f$ as $\nabla_ff$; when $f$ is merely $C^0$, one can define $\nabla_ff$ distributionally (see Section~\ref{sec:prelims}). If $\nabla_ff$ can be represented by a continuous function, we say that $f$ is of \emph{class $\COW$} or $f\in \COW(U)$.
We call the intrinsic graphs of $\COW$ functions \emph{$\COW$ graphs}. These are important examples of \emph{regular surfaces}; that is, they are the level sets of functions $\phi$ such that the horizontal gradient  $\nabla_{\mathbb{H}}\phi=(X\phi,Y\phi)$ is continuous and nonvanishing.

The $\COW$ condition bounds how quickly $f$ varies in the horizontal direction, but $f$ can vary very rapidly in non-horizontal directions. For example, Kirchheim and Serra Cassano constructed $\COW$ graphs that have Hausdorff dimension 2.5 with respect to the Euclidean metric~\cite{KirSC04}. This makes it difficult to apply standard variational methods to such surfaces, because perturbing a $\COW$ graph generally changes the set of horizontal directions. Indeed, a perturbation of a $\COW$ graph need not be $\COW$, and may even have infinite perimeter.

\begin{figure}
  \hfill
  \hfill
  \parbox{.5\textwidth}{\includegraphics[width=.5\textwidth]{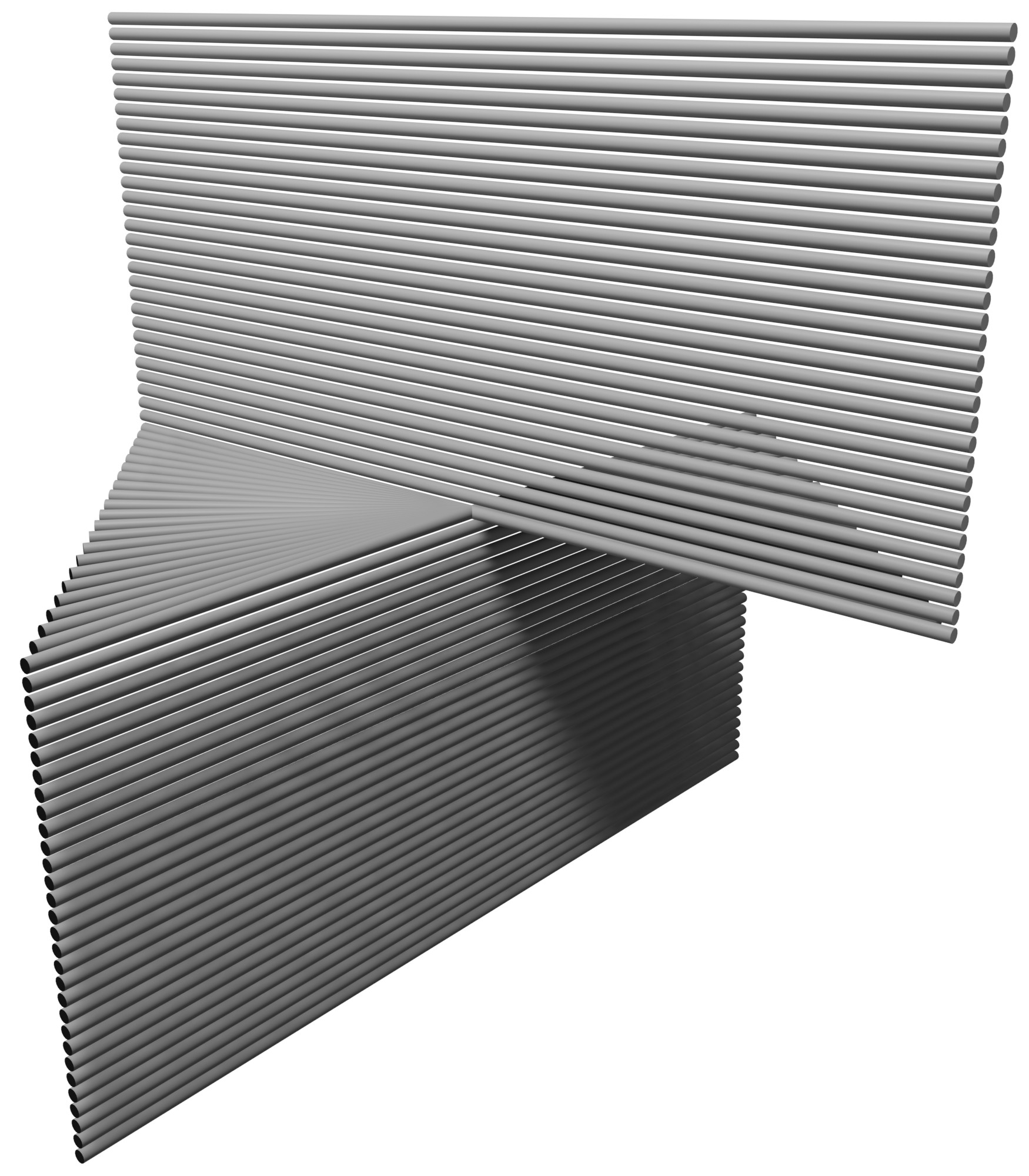}}
  \hfill
  \parbox{.3\textwidth}{\begin{center}
      \includegraphics[width=.27\textwidth]{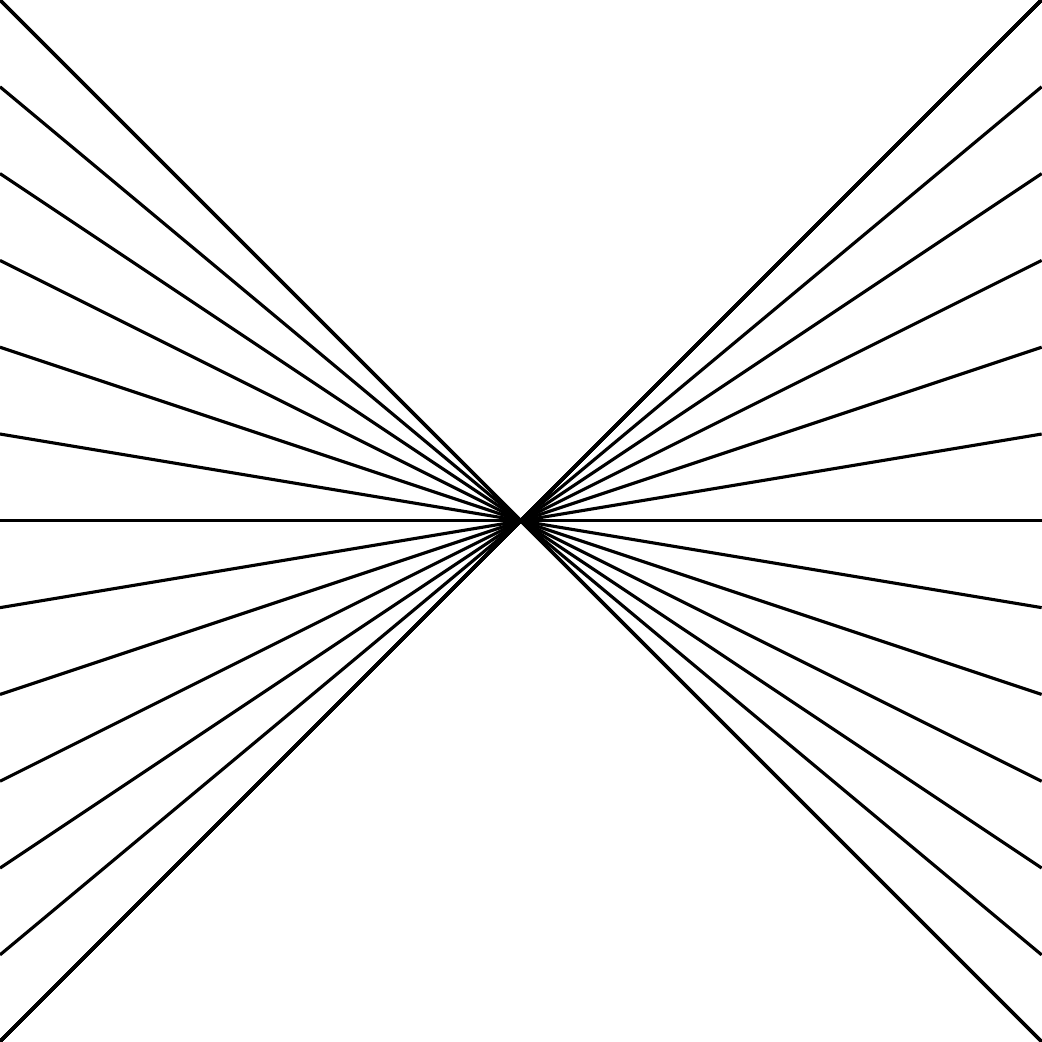}
      
      \bigskip 
      
      \includegraphics[width=.27\textwidth]{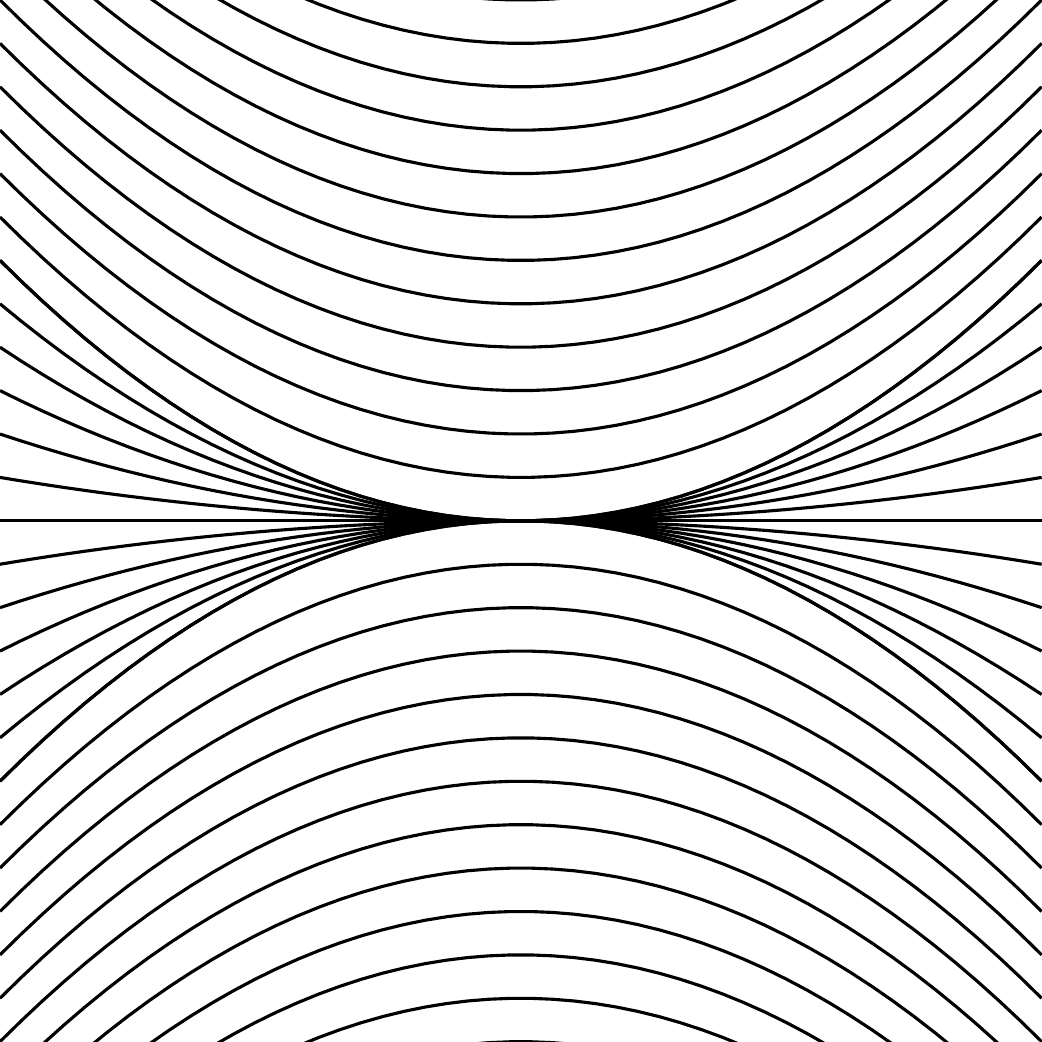}
    \end{center}}
  \hfill
  \hfill
  \caption{\label{fig:broken-plane}
    The left figure shows the broken plane $\BP_1\subset \HH$, which is made up of two half-planes connected by two wedges. (See Section~\ref{sec:scaling-limits} for the definition of $\BP_u$.) While the half-planes are foliated by horizontal lines, the horizontal lines that make up the wedges all intersect at the origin. The right figures show the projection of the horizontal lines in $\BP_1$ to the abelianization $\mathsf{A}$ and the projection along cosets of $\langle Y \rangle$ to the $xz$--plane $V_0$. Horizontal lines project to parabolas in $V_0$; the two half planes project to two families of parallel parabolas, while the wedges project to a family of parabolas through $\0$.  }
\end{figure}

This even affects relatively simple graphs, like the broken plane depicted in Figure~\ref{fig:broken-plane}. For $u\ge 0$, the broken plane $\BP_u$ is a surface made up of two vertical half-planes with slope $\pm u$ (the slope of a vertical half-plane is the slope of its projection to the $xy$--plane) connected by two wedges in the $xy$--plane. For any $u$, there is a function $b\from V_0\to \R$ such that $\Gamma_b=\BP_u$ and $b$ is intrinsic Lipschitz, that is, $\|\nabla_{b}b\|_\infty \le u$. This bounds how quickly $b$ varies in horizontal directions, but $b$ can still vary rapidly in vertical directions. In particular, $\partial_zb$ goes to infinity near $\0$, so if $f\in C^\infty_c(V_0)$ is nonzero at $\0$, then
$$\nabla_{b + f}[b+f] = (X-(b+f)Z)b + \nabla_{b + f}f= \nabla_b b - f\partial_z b + \nabla_{b + f}f.$$
Since $\nabla_bb$ and $\nabla_{b + f}f$ are bounded, this goes to infinity near $\0$, which means that $b+f$ is not intrinsic Lipschitz. This makes it difficult to decrease the area of $\BP_u$ by a smooth deformation. Indeed, Nicolussi Golo and Serra Cassano showed \cite{NSCBernstein} that $\BP_u$ is area-stable in the sense that $\partial_\epsilon[\area(\Gamma_{b+\epsilon f})]=0$ and $\partial^2_\epsilon[\area(\Gamma_{b+\epsilon f})]\ge 0$ for any $f\in C^\infty_c(V_0)$ and asked whether $\BP_u$ is area-minimizing.

One approach to questions like this is to consider variations of surfaces through intrinsic Lipschitz graphs. For example, \cite{Golo} and \cite{YoungHGraphs} studied variations through contact diffeomorphisms. These variations preserve the horizontal distribution on $\HH$ and send intrinsic Lipschitz graphs to intrinsic Lipschitz graphs, but there are many examples of surfaces that are area-stable with respect to contact variations but not smooth variations; Nicolussi Golo \cite{Golo} showed, for instance, that every ruled $\COW$ graph is area-stable with respect to contact variations.

The issue is that there are too few contact variations to recognize area-minimizing surfaces. Because contact variations preserve the horizontal distribution, they send horizontal curves to horizontal curves and do not affect the horizontal connectivity of a surface. That is, if $f\from \HH\to \HH$ is a contact diffeomorphism and $p$ and $q$ are connected by a horizontal curve in $\Sigma$, then $f(p)$ and $f(q)$ are connected by a horizontal curve in $f(\Sigma)$. Consequently, a surface that is area-minimizing with respect to contact variations can still have competitors with smaller area but different horizontal connectivity. Indeed, Figure~\ref{fig:conj-surfs} shows examples of competitors for $\BP_u$ with smaller area, but with a different pattern of horizontal curves. Finding a family of variations of non-smooth surfaces in $\HH$ that is rich enough to recognize area-minimizing surfaces would be an important step in the study of perimeter-minimizers in $\HH$.

In this paper, we introduce a family of variations of ruled surfaces and use it to characterize area-minimizing graphical strips. A \emph{graphical strip} is a ruled intrinsic graph that is symmetric around the $z$--axis (see Section~\ref{sec:deform-strips}). For example, the broken plane $\BP_u$ is a graphical strip. Previous work with graphical strips, for instance \cite{NSCBernstein}, \cite{GRBernstein}, and \cite{DGNPStrips}, mostly focused on the case of entire graphical strips, showing that, under some regularity conditions, an entire area-minimizing graphical strip is a vertical plane. Here, we will consider the case of graphical strips with boundary, especially graphical strips that project to a vertical strip in $V_0$.

We prove the following. Let $K=\{(x,0,z)\in V_0\mid -1\le x\le 1\}$ and let $U=\{(x,y,z)\in \HH\mid -1<x<1\}$. Let $\Sigma$ be a graphical strip over $K$. Then $\Sigma$ is an intrinsic graph, so we can define its epigraph $\Sigma^+$ (Section~\ref{sec:int-graphs}). We say that $\Sigma$ is area-minimizing in $U$ if $\Sigma^+$ is perimeter-minimizing in $U$.
\begin{thm}\label{thm:minimal-strips}
  Let $\Sigma$ be a graphical strip over $K$. Then $\Sigma$ is area-minimizing in $U$ if and only if there is a function $\sigma\from \R\to \R$ such that
  $-2 \le \frac{\sigma(t) - \sigma(s)}{t - s} < 2$ for all $s<t$ and
  $$\Sigma = \bigcup_{z\in \R} [(-1, -\sigma(z), z), (1, \sigma(z), z)],$$
  where $[p_1,p_2]$ denotes the line segment from $p_1$ to $p_2$.
\end{thm}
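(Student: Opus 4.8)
I will prove the two implications separately. It is convenient to record first that the condition $-2\le\frac{\sigma(t)-\sigma(s)}{t-s}<2$ for all $s<t$ says exactly that $z\mapsto\sigma(z)+2z$ is nondecreasing and $z\mapsto2z-\sigma(z)$ is strictly increasing; in particular $\sigma$ is continuous (a jump would have to be simultaneously upward and downward) and $z\mapsto z-\tfrac{a^2}{2}\sigma(z)$ is strictly increasing for every $a\in[-1,1]$, with the case $a=\pm1$ corresponding to the strict upper bound.

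\emph{Sufficiency.} Assume $\sigma$ is as in the statement and $\Sigma=\bigcup_z[(-1,-\sigma(z),z),(1,\sigma(z),z)]$. Each segment $[(-1,-\sigma(z),z),(1,\sigma(z),z)]$ is horizontal with midpoint $(0,0,z)$, and since $z\mapsto z-\tfrac{a^2}{2}\sigma(z)$ is strictly increasing for $a\in[-1,1]$, the set $\Sigma$ is an intrinsic graph over $K$ whose epigraph $\Sigma^+$ is well defined (Section~\ref{sec:int-graphs}); in particular $\Sigma$ is a graphical strip over $K$. The heart of the argument is to show that $\Sigma^+$ is a \emph{monotone set in $U$}: for a.e.\ horizontal line $L$, the slice $\Sigma^+\cap L\cap U$ is a subray of $L\cap U$, equivalently every horizontal line not contained in $\Sigma$ meets $\Sigma$ at most once inside the slab $\{-1<x<1\}$. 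I would argue this by cases on the direction of $L$. Since the rulings of $\Sigma$ lie in the planes $\{z=\mathrm{const}\}$, the trace $\Sigma\cap\{x=x_0\}$ is the curve $y=x_0\sigma(z)$, and intersecting it with the horizontal lines of that plane reduces to the injectivity of $z\mapsto x_0y-2z-x_0^2\sigma(z)$, which follows from the strict upper bound. For a line of direction $(1,c,0)$, substituting the line into $\Sigma$ yields an equation of the form $y_0+tc=t\,\sigma\!\left(z_0-\tfrac{ty_0}{2}\right)$ for the intersection parameter $t$: the upper bound again controls the graph‑like solution, while the lower bound $\frac{\sigma(t)-\sigma(s)}{t-s}\ge-2$ is precisely what prevents a second solution with $|t|<1$ when $c$ is near a ruling slope (the two candidate parameters are, to leading order, $\pm\sqrt{-2/\kappa}$, where $\kappa$ is the local slope of $\sigma$, and these leave $[-1,1]$ exactly when $\kappa\ge-2$). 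Once $\Sigma^+$ is known to be monotone in $U$, I conclude that it is perimeter‑minimizing there, using the principle that a set which is monotone along a.e.\ horizontal line cannot be improved in perimeter by a competitor with the same boundary data.

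\emph{Necessity.} Assume $\Sigma$ is a graphical strip over $K$ which is area‑minimizing in $U$. By the structure of graphical strips (Section~\ref{sec:deform-strips}), each ruling is a horizontal segment with midpoint on the $z$‑axis running from $\{x=-1\}$ to $\{x=1\}$, so $\Sigma=\bigcup_z[(-1,-\sigma(z),z),(1,\sigma(z),z)]$ for some $\sigma\from\R\to\R$; and because $\Sigma$ is a graph over all of $K$, in particular over the boundary lines $x=\pm1$, the map $z\mapsto z-\tfrac12\sigma(z)$ is strictly increasing (if $\frac{\sigma(t)-\sigma(s)}{t-s}=2$ for some $s<t$ it would identify the levels $s$ and $t$), which is the bound $\frac{\sigma(t)-\sigma(s)}{t-s}<2$. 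It remains to prove $\frac{\sigma(t)-\sigma(s)}{t-s}\ge-2$, and here I would use the deformations obtained by varying a vertical curve. Keeping the boundary traces $\Sigma\cap\{x=\pm1\}$ fixed, replace the vertical curve $\Sigma\cap\{x=0\}$ (which for a strip is the $z$‑axis) by a compactly supported perturbation $\gamma\subset\{x=0\}$, and take the competitor to be the union of the horizontal segments joining $\Sigma\cap\{x=-1\}$ to $\gamma$ and $\gamma$ to $\Sigma\cap\{x=1\}$; this is a legitimate competitor for $\Sigma^+$ in $U$. Computing the area of the competitor in terms of $\gamma$ and $\sigma$, I would show that if $\frac{\sigma(t)-\sigma(s)}{t-s}<-2$ on some interval then a suitable $\gamma$ strictly decreases the area, contradicting minimality. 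The same mechanism appears structurally: failure of $z\mapsto z+\tfrac12\sigma(z)$ to be monotone lets the rulings of $\Sigma$ be re‑matched between $\{x=-1\}$ and $\{x=1\}$ into a distinct competitor with the same boundary, and the area comparison shows this competitor is strictly shorter exactly when the slope drops below $-2$ — it has equal area at slope $-2$, which is why equality is allowed and why some boundary curves admit uncountably many minimal fillings.

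The step I expect to be the main obstacle is the necessity direction: making the ``vary a vertical curve'' deformations precise — verifying that the rebuilt surfaces are genuine finite‑perimeter competitors that agree with $\Sigma$ off a compact subset of $U$, and pinning down which perturbations $\gamma$ decrease area — and then carrying out the area comparison sharply enough to detect that $-2$ is the exact threshold and that the inequality there is non‑strict. A secondary difficulty, on the sufficiency side, is to verify the monotone‑slice property uniformly over \emph{all} directions of horizontal lines (not only those parallel to the rulings) and to invoke ``monotone $\Rightarrow$ perimeter‑minimizing'' in exactly the form needed for minimization in the open slab $U$.
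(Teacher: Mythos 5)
Your outline follows the paper's own strategy in both directions (sufficiency: the epigraph is monotone and monotone sets are perimeter-minimizing via the kinematic formula; necessity: deform the strip by moving the central vertical curve in the $yz$--plane, re-rule both halves, and expand the area to second order), so the issue is not the route but two genuine gaps. The more serious one is at the start of your necessity argument: you assert that \emph{any} graphical strip over $K$ can be written as $\bigcup_{z}[(-1,-\sigma(z),z),(1,\sigma(z),z)]$ for a function $\sigma$ of $z$. This is false: the broken plane $\BP_u\cap\Pi^{-1}(K)$ with $u>0$ is a graphical strip over $K$, yet all the rulings through the wedges sit at height $z=0$, so no such function exists. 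The existence of the $\sigma$--representation is itself part of the theorem's conclusion and is exactly what fails for the surfaces you most need to rule out; assuming it makes the argument circular. The paper avoids this by parametrizing the rulings through the boundary trace at $x=1$, writing $\Sigma=S_\alpha$ with rulings $[(-1,-\alpha(w),\eta(w)),(1,\alpha(w),\eta(w))]$, $\eta(w)=w+\tfrac{\alpha(w)}{2}$ (this always exists), proving $\frac{\alpha(w_2)-\alpha(w_1)}{w_2-w_1}\ge-1$ by the variational argument, and only then passing to $\sigma$ via Lemma~\ref{lem:alternate}; the degenerate case where $\eta$ is not injective (coincident rulings, i.e.\ slope exactly $-2$ for $\alpha$ on an interval) has to be treated separately in the second-variation step, and in your $\sigma$--coordinates it cannot even be written down.

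The second gap is that the quantitative core of necessity is only promised, not supplied: you have not derived the second-variation formula (in the paper, $\II(\tau)=\int \tau(\eta(w))^2(1+\alpha(w)^2)^{-3/2}(1+\alpha'(w))\,\mathrm{d}w$, whose sign is what pins the threshold at $\alpha'\ge-1$, i.e.\ $\sigma$--slope $\ge-2$), and the two technical obstructions you flag are real and need the paper's specific fix: the re-ruled competitor's symmetric difference with $\Sigma^+$ is not compactly contained in the open slab $U$, and the boundary datum need not be Lipschitz, so the expansion cannot be justified directly. Both are resolved simultaneously by comparing against the dilated strip $s_{1+\epsilon,1+\epsilon}(S_\alpha)$, whose restriction to $U$ has Lipschitz boundary datum (by the ordering of rulings on the wider slab) and for which the deformation is a legitimate compactly supported competitor, followed by letting $\epsilon\to0$. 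On the sufficiency side your conclusion is correct, but the direction-by-direction case analysis (and the heuristic ``candidate parameters $\pm\sqrt{-2/\kappa}$'') is not a proof as written; the clean statement is that $\Phi(x_1,z_1)$ and $\Phi(x_2,z_2)$ with $z_1<z_2$, $x_1,x_2\in(-1,1)$ lie on a common horizontal line only if $x_1x_2\,\frac{\sigma(z_2)-\sigma(z_1)}{z_2-z_1}=2$, which is impossible when $|x_1x_2|<1$ and the quotient lies in $[-2,2]$, so any horizontal line meeting the open strip twice meets it along a ruling.
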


When there are $s<t$ such that $\frac{\sigma(t) - \sigma(s)}{t - s} \ge 2$, the surface $\Sigma$ is not an intrinsic graph. When $\sigma'(t) = -2$ for all $t$, $\Sigma$ is area-minimizing, but it is one of uncountably many area-minimizing surfaces with the same boundary.
\begin{thm}\label{thm:non-unique}
  Let $\gamma=\{(-1, 2z,z)\mid z\in \R\} \cup \{(1, -2z,z)\mid z\in \R\}$. Any two points $(-1, 2z_1,z_1)$ and $(1, -2z_2,z_2)$ are connected by a horizontal line. Let $\rho \from \R\to \R$ be a surjective continuous increasing function and let
  $$\Sigma_\rho = \bigcup_{z\in \R} [(-1, 2z,z), (1,-2\rho(z),\rho(z))].$$
  Then $\Sigma_\rho$ is an area-minimizing surface with $\partial \Sigma_\rho = \gamma$.
\end{thm}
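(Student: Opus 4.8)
The plan is to build a single calibration, independent of $\rho$, which at one stroke shows that every $\Sigma_\rho$ is area-minimizing and that there are uncountably many of them. Write $\gamma=\gamma_-\cup\gamma_+$ with $\gamma_-=\{(-1,2s,s)\mid s\in\R\}$ and $\gamma_+=\{(1,-2t,t)\mid t\in\R\}$. From \eqref{eq:heisMult} one computes $(-1,2s,s)^{-1}\cdot(1,-2t,t)=(2,-2s-2t,0)$, so any point of $\gamma_-$ and any point of $\gamma_+$ are joined by a horizontal line, which (being horizontal) is the Euclidean segment $\ell_{s,t}=\{(x,\,s(1-x)-t(1+x),\,\tfrac12 s(1-x)+\tfrac12 t(1+x))\mid -1\le x\le1\}$. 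Solving for $(s,t)$ shows that through every $p=(x,y,z)\in U$ there passes exactly one such segment, the one with $s=\frac{y+2z}{2(1-x)}$ and $t=\frac{2z-y}{2(1+x)}$; thus $\{\ell_{s,t}\}$ foliates $U$ and $\Sigma_\rho=\bigcup_{s}\ell_{s,\rho(s)}$ is a union of leaves. As $\rho$ is a continuous increasing bijection of $\R$, distinct leaves of $\Sigma_\rho$ are disjoint in $U$ and vary continuously, so $\Sigma_\rho$ is embedded with $\partial\Sigma_\rho=\gamma_-\cup\gamma_+=\gamma$ (surjectivity of $\rho$ is used to see that the $x=1$ ends sweep out all of $\gamma_+$). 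Finally, for each $x\in[-1,1]$ the map $s\mapsto\frac{(1-x)^2}{2}s+\frac{(1+x)^2}{2}\rho(s)$ is a strictly increasing bijection of $\R$, which translates into the statement that $\Sigma_\rho$ meets each intrinsic fibre $uY^{\R}$, $u\in K$, exactly once; hence $\Sigma_\rho=\Gamma_{f_\rho}$ is an intrinsic graph over $K$ and its epigraph $\Sigma_\rho^+$ is defined.

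Next I would exhibit the calibration. On $U$ set $a\eqdef\frac{xy+2z}{1-x^2}$ and $\nu\eqdef(a^2+1)^{-1/2}(aX+Y)$, a horizontal unit field. Writing $s,t$ also for the two coordinate functions above, one computes $\nabla_{\HH}s=\frac{1+x}{2(1-x)}(aX+Y)$ and $\nabla_{\HH}t=-\frac{1-x}{2(1+x)}(aX+Y)$: both are everywhere parallel to $\nu$. Consequently the horizontal Gauss map of every level set $\Sigma_\rho=\{t=\rho(s)\}$ equals $\pm\nu$, and the sign is independent of $\rho$ — this is the conceptual reason the same calibration serves the whole family. The crucial fact is that $\nu$ is horizontally divergence-free: a short computation gives $Xa=\frac{2xa}{1-x^2}$ and $Ya=\frac{2x}{1-x^2}$, hence $Xa=a\,Ya$, and since $\div_{\HH}\nu=(a^2+1)^{-3/2}(Xa-a\,Ya)$ we get $\div_{\HH}\nu\equiv0$ on $U$.

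The conclusion is then standard. Along $\Sigma_\rho$ the horizontal normal of $\Sigma_\rho^+$ equals $\pm\nu$; replacing $\nu$ by $-\nu$ if necessary we may assume it equals $\nu$ on $\partial^*\Sigma_\rho^+=\Sigma_\rho$ (the sign is constant because $\Sigma_\rho$ is connected, and $\Sigma_{\mathrm{id}}$ — which is area-minimizing by Theorem~\ref{thm:minimal-strips} — is the member corresponding to $\rho=\mathrm{id}$). Since $\div_{\HH}\nu=0$, the field $\nu$ calibrates $\Sigma_\rho^+$: for any competitor $F$ with $F\symdiff\Sigma_\rho^+\Subset U$, the divergence theorem applied on a large ball gives $\Per(F;U)\ge\int_{\partial^*F}\langle\nu,\nu_F\rangle=\int_{\partial^*\Sigma_\rho^+}\langle\nu,\nu_{\Sigma_\rho^+}\rangle=\Per(\Sigma_\rho^+;U)$, so $\Sigma_\rho$ is area-minimizing in $U$. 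For the non-uniqueness, the one-parameter family $\rho_c(s)=(1+c)s$, $c>-1$, consists of continuous increasing surjections, and the surfaces $\Sigma_{\rho_c}$ are pairwise distinct (for instance, the ruling of $\Sigma_{\rho_c}$ through $(-1,2,1)$ ends at $(1,-2(1+c),1+c)$); more generally $\rho$ is recovered from the curve $\Sigma_\rho\cap\{x=0\}=\{(0,s-\rho(s),\tfrac12(s+\rho(s)))\mid s\in\R\}$, so all the $\Sigma_\rho$ are distinct. This proves the theorem.

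The step I expect to require the most care is the measure-theoretic one: verifying that $\Sigma_\rho^+$ is a set of locally finite $\HH$-perimeter with reduced boundary exactly $\Sigma_\rho$ and horizontal normal $\nu$ $\mathcal{H}$-a.e., so that the calibration argument actually applies, when $\rho$ is only continuous and increasing (for a Cantor-type $\rho$ the parametrization of $\Sigma_\rho$ is only Hölder in $s$). Concretely one must show, via the intrinsic-graph calculus of Section~\ref{sec:int-graphs}, that $f_\rho$ is of class $\COW$ with intrinsic gradient represented by the continuous field determined by $\nu$; this is precisely where monotonicity and continuity of $\rho$ enter, and it parallels the sufficiency half of Theorem~\ref{thm:minimal-strips}. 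By contrast, the Heisenberg arithmetic, the divergence identity, the identification of the foliation, and the orientation bookkeeping are all routine.
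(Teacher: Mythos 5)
Your calibration route is genuinely different from the paper's argument, and the computations at its heart check out: with $a=\frac{xy+2z}{1-x^2}$ one indeed has $Ya=\frac{2x}{1-x^2}$ and $Xa=\frac{2xa}{1-x^2}=a\,Ya$, so $\div_{\mathbb{H}}\nu=0$ on $U$; moreover $a=s+t$ along the segment $\ell_{s,t}$, whose horizontal direction is $X-aY$, so $\nu$ is orthogonal to every ruling and the single field $\nu$ calibrates the entire family $\{\Sigma_\rho\}$ simultaneously. The identification of the foliation of $U$ by the segments $\ell_{s,t}$ and the intrinsic-graph property of $\Sigma_\rho$ (via strict monotonicity of $s\mapsto \tfrac{(1-x)^2}{2}s+\tfrac{(1+x)^2}{2}\rho(s)$, which is exactly the paper's computation \eqref{eq:gz1z2}) are also correct. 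The paper argues differently: Lemma~\ref{lem:non-unique-monotone} shows by a hyperboloid-type computation that no horizontal line joins two distinct rulings, hence $\Sigma_\rho^+$ is monotone on $U$, and monotone sets are perimeter minimizers by the kinematic formula (Proposition~\ref{prop:monotone-implies-min}). What each approach buys: yours explains conceptually why all the $\Sigma_\rho$ minimize (they are unions of leaves of one calibrated foliation) and would apply to any union of leaves; the paper's route works line-by-line and never needs to identify the reduced boundary, the horizontal normal, or any regularity of $\Sigma_\rho^+$ --- which is precisely the part your argument still owes.

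That owed step is the one genuine gap, and you flag it yourself: to run the divergence-theorem comparison you must know that $\Sigma_\rho^+$ has locally finite perimeter in $U$, that $\partial^{*}\Sigma_\rho^+$ coincides with $\Sigma_\rho$ up to $\cS^3_\infty$-null sets, and that the horizontal normal is $\pm\nu$ almost everywhere; for $\rho$ merely continuous and increasing this is not free, and the area formula quoted in Section~\ref{sec:int-graphs} is stated only for intrinsic Lipschitz graphs. Two ways to close it with the paper's own tools: (i) on any $W\Subset U$ the parameters $s=\frac{y+2z}{2(1-x)}$ and $t=\frac{2z-y}{2(1+x)}$ are bounded, so the ruling slopes $-(s+\rho(s))$ are bounded and $f_\rho$ is locally intrinsic Lipschitz by the criterion ``$f\in C^0$, $\nabla_ff\in L_\infty$'' cited in Section~\ref{sec:int-graphs} --- though you still have to verify that the distributional $\nabla_{f_\rho}f_\rho$ is the continuous slope function (e.g.\ by approximation), which is more work than the $\COW$ assertion suggests; or (ii) sidestep regularity entirely by approximating $\rho$ with smooth increasing surjections $\rho_k$, noting each smooth $\Sigma_{\rho_k}$ is calibrated by the same $\nu$ and hence minimizing, and then applying Proposition~\ref{prop:limits-of-min} since $\Sigma_{\rho_k}^+\to\Sigma_\rho^+$ locally. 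Finally, the appeal to $\Sigma_{\mathrm{id}}$ and to connectedness to fix the sign of $\nu$ is unnecessary: for an intrinsic epigraph the $Y$-component of the outward horizontal normal has a fixed sign, which pins the orientation down directly.
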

Pauls gave an example of a closed curve that admits two different fillings by ruled surfaces in \cite{PaulsMinimalSurf}, showing that the Heisenberg minimal surface equation with Dirichlet boundary conditions can have multiple solutions, but the ruled surfaces he constructed are not area-minimizing \cite{ChengHwangYang}.

A further consequence of Theorem~\ref{thm:minimal-strips} is that if $\Sigma$ is an area-minimizing graphical strip over $K$, then no two rulings of $\Sigma$ intersect. It follows that when $u>0$, the broken plane $\BP_u$ is not area-minimizing.

\begin{figure}
  \hfill
  \hfill 
  \parbox{.5\textwidth}{\includegraphics[width=.5\textwidth]{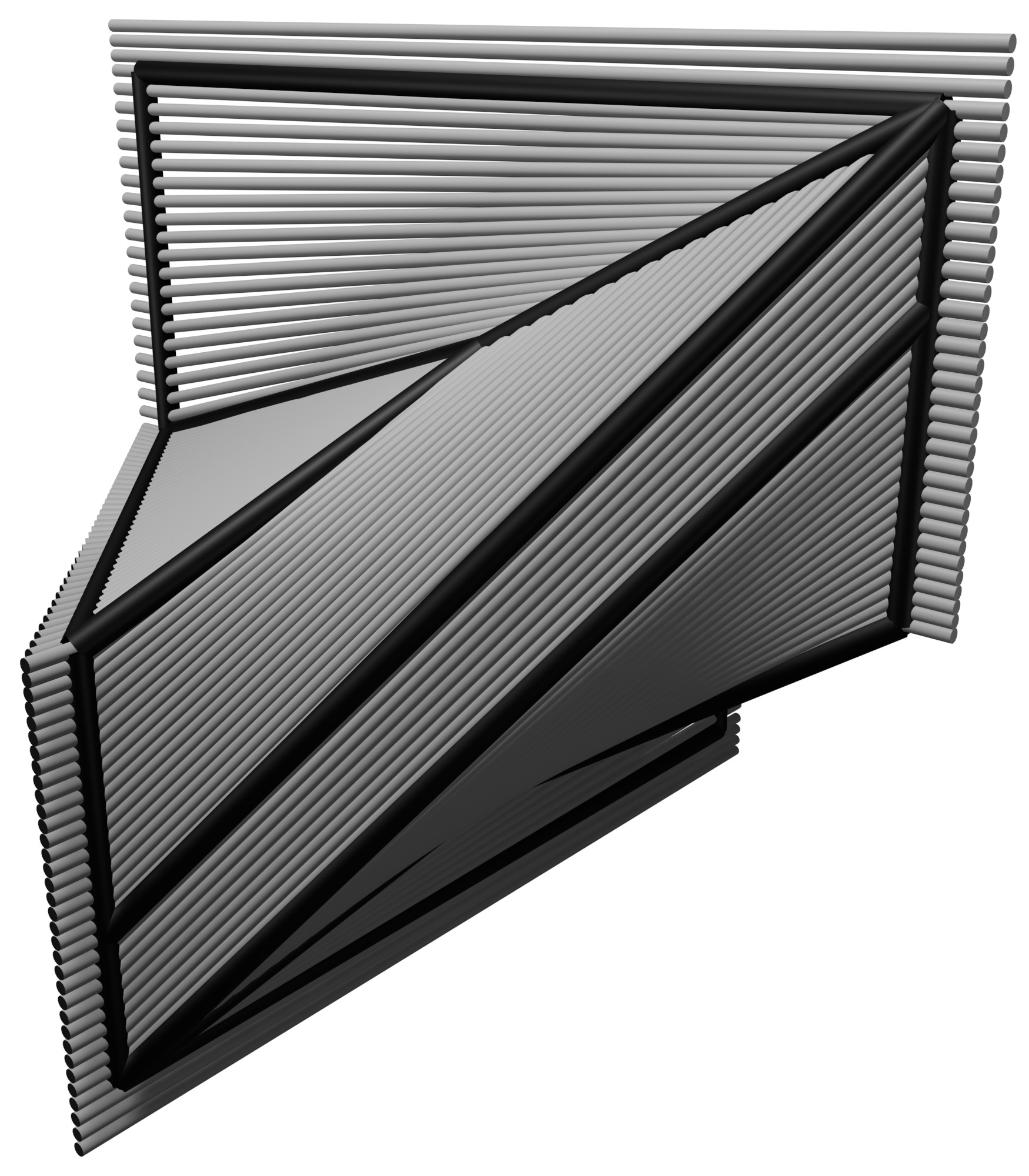}}
  \hfill 
  \parbox{.3\textwidth}{\begin{center}
      \includegraphics[width=.27\textwidth]{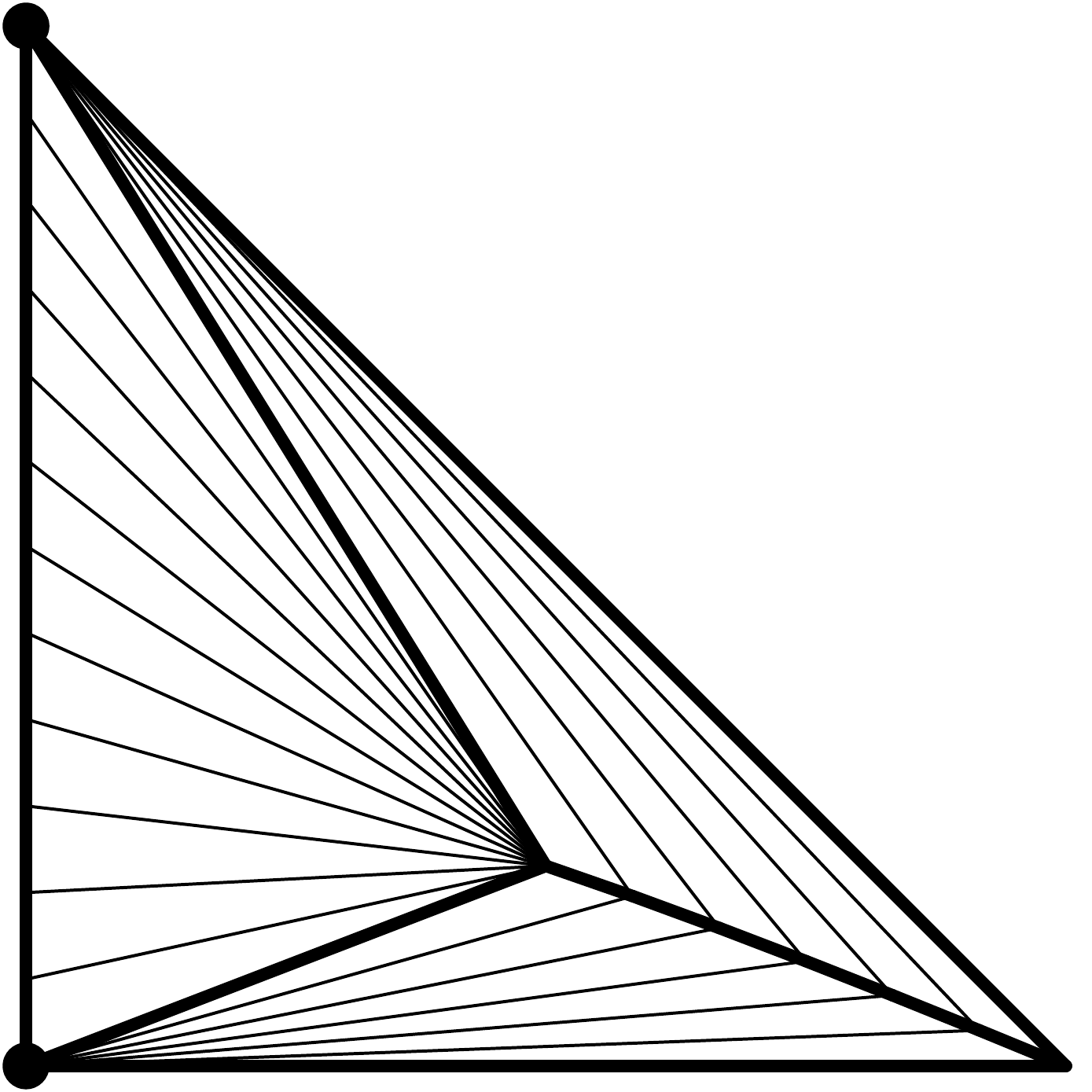}

      \bigskip 

      \includegraphics[width=.27\textwidth]{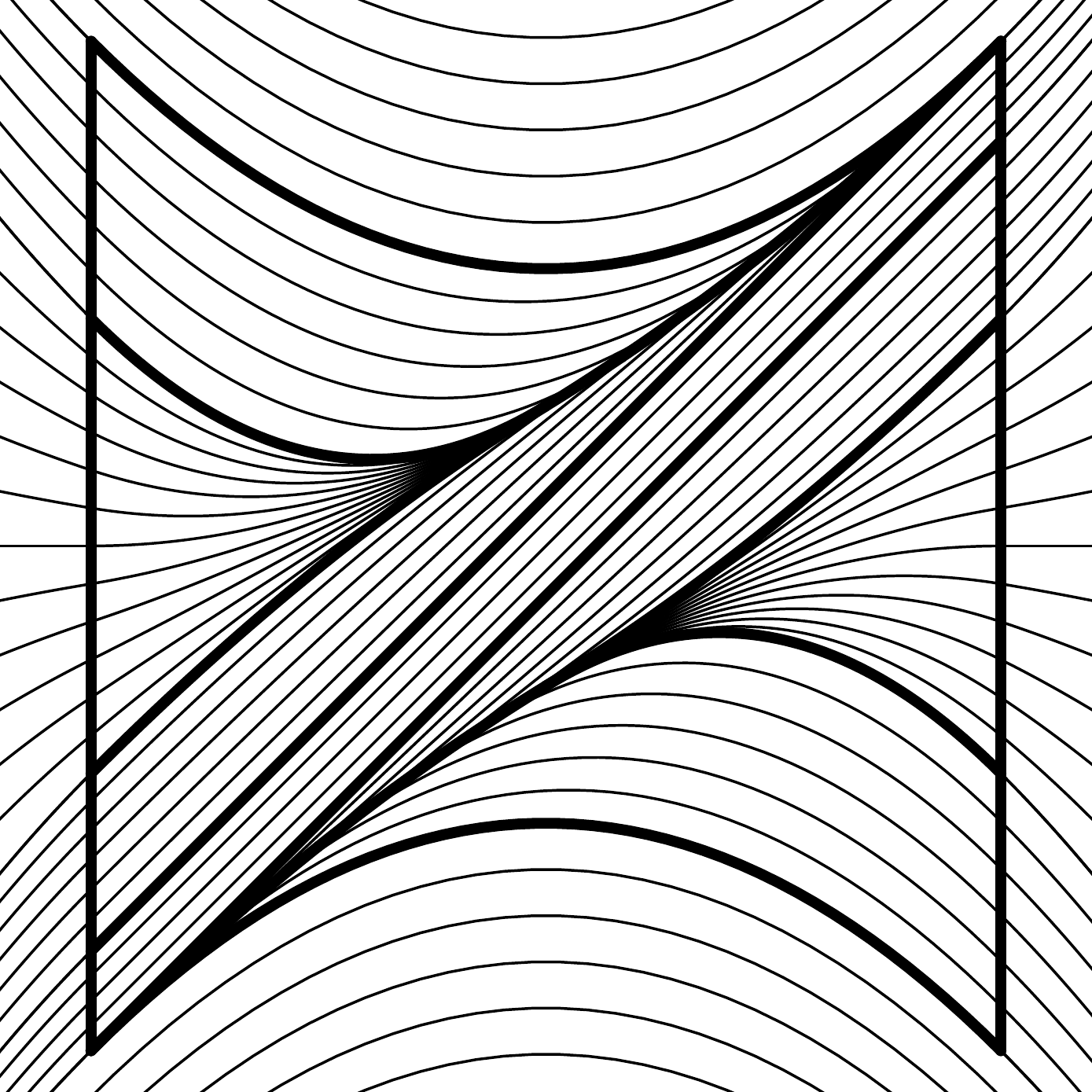}
    \end{center}}
  \hfill
  \hfill
  
  \bigskip 

  \hfill
  \hfill
  \parbox{.5\textwidth}{\includegraphics[width=.5\textwidth]{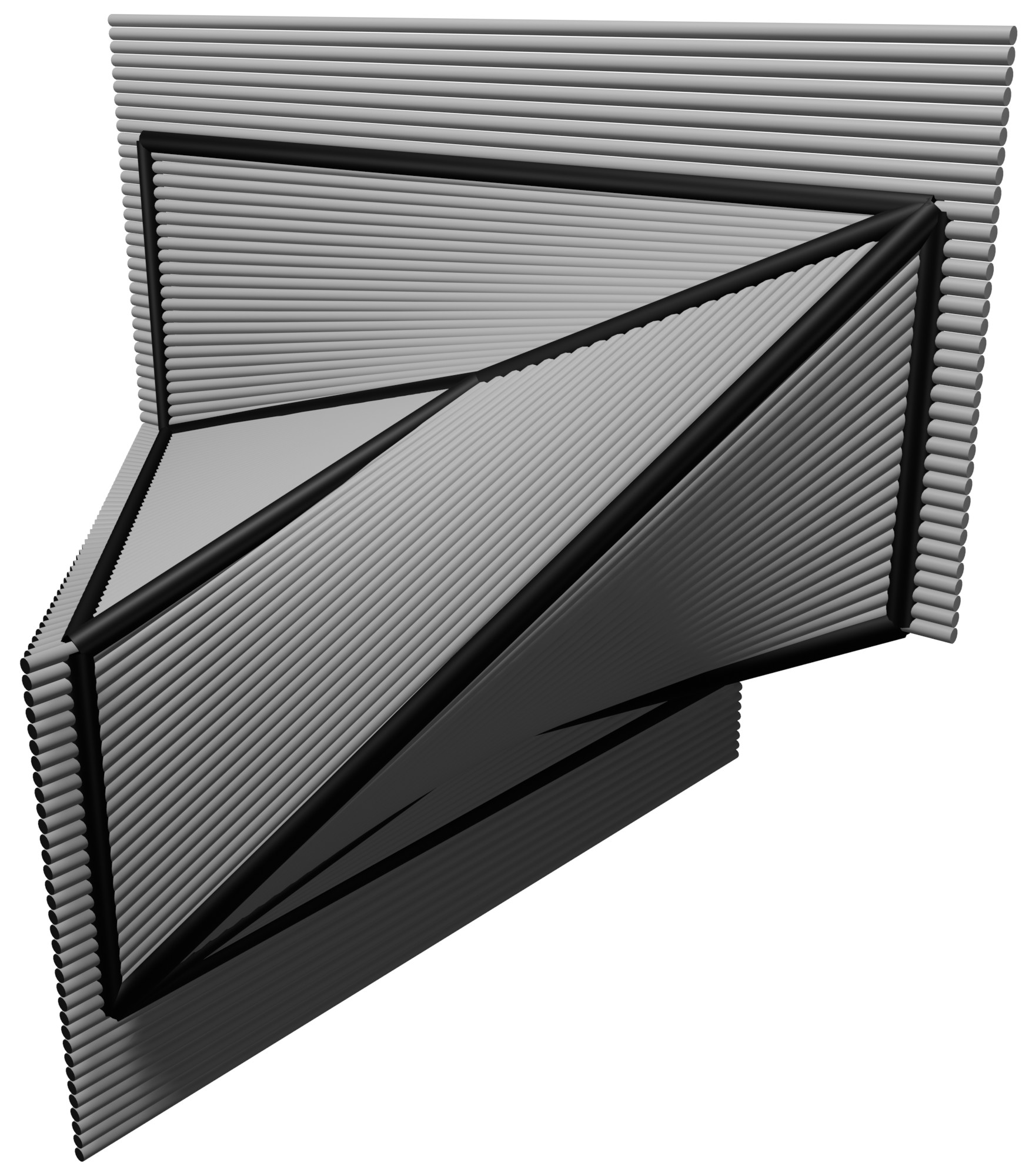}}
  \hfill 
  \parbox{.3\textwidth}{\begin{center}
      \includegraphics[width=.27\textwidth]{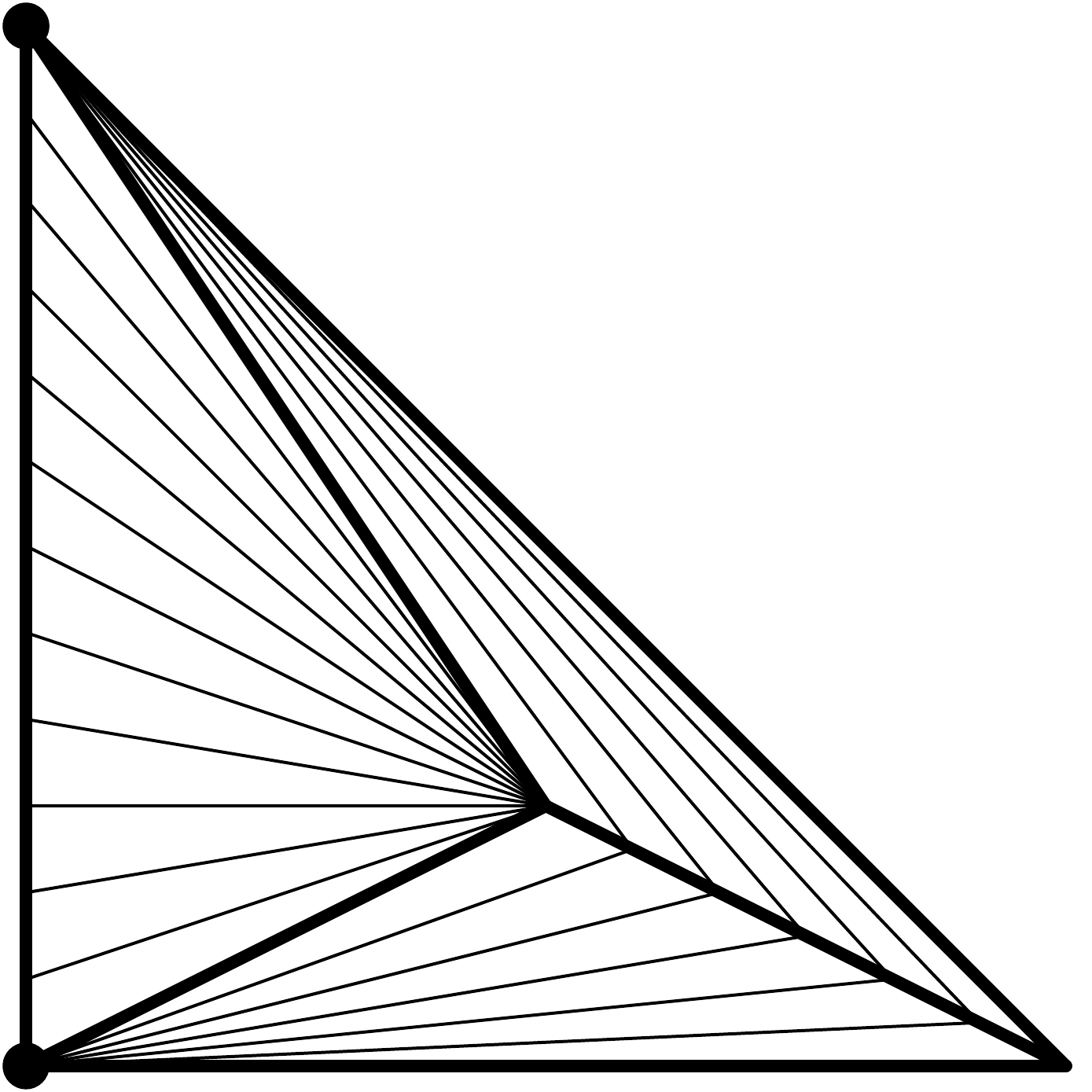}
      
      \bigskip 
      
      \includegraphics[width=.27\textwidth]{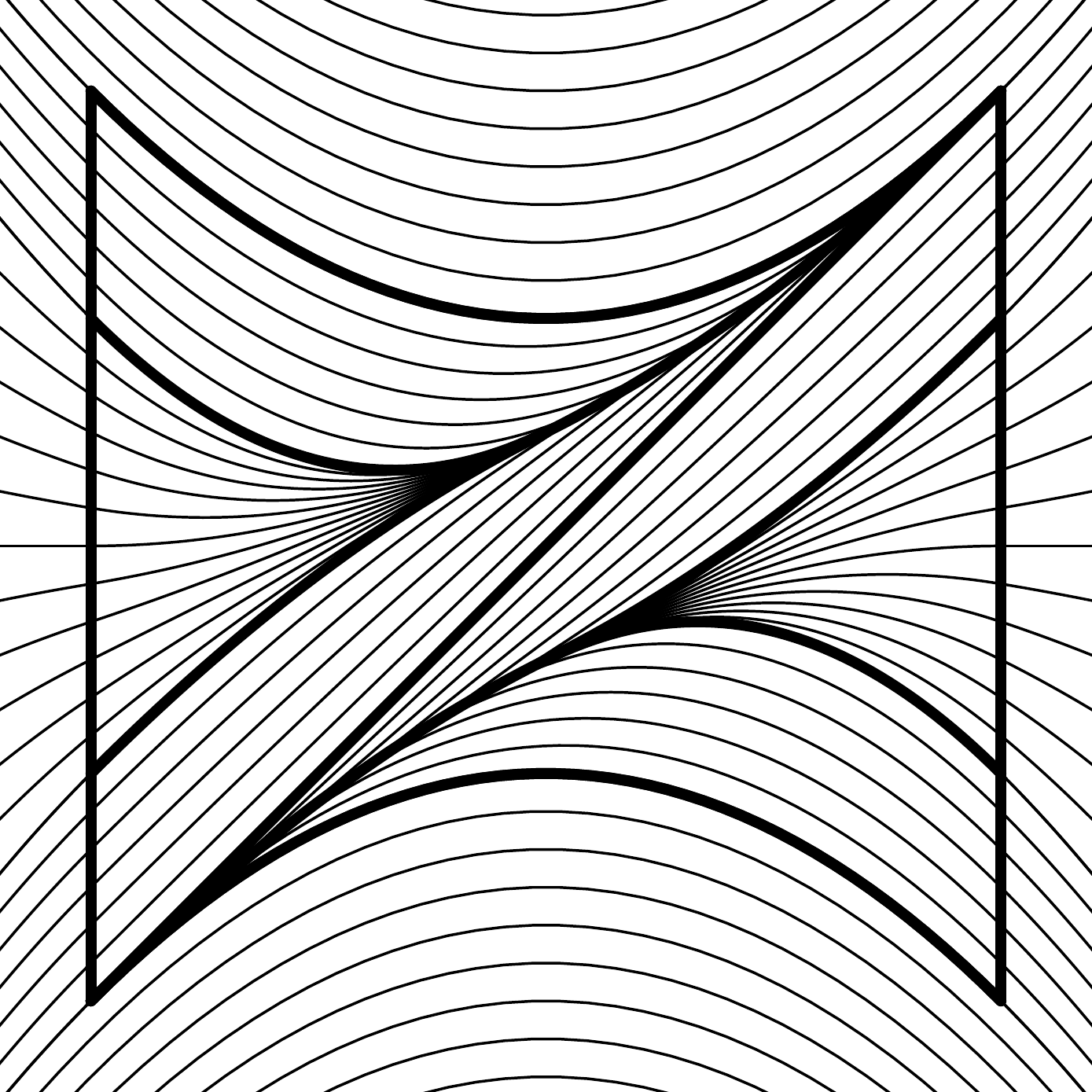}
    \end{center}}
  \hfill
  \hfill
  \caption{\label{fig:conj-surfs}
    Conjectured area-minimizing (top) and energy-minimizing (bottom) competitors for $\BP_1$. (See Section~\ref{sec:competitors} for full definitions.) The figures on the right are the projections to $\mathsf{A}$ and $V_0$. For clarity, the projections to $\mathsf{A}$ only show the top half of each surface. Black lines on the surfaces correspond to thick lines in the projections; the dots mark the images of vertical black lines.
  }
\end{figure}

The proof of Theorem~\ref{thm:minimal-strips} relies on constructing deformations of graphical strips through piecewise-ruled surfaces and computing the second variation of the area under such deformations (Proposition~\ref{prop:vf-strips}). We construct these deformations as follows. Since $\Sigma$ is an intrinsic graph over $K$ which is symmetric around the $z$--axis, the boundary of $\Sigma$ consists of two intrinsic graphs $X \gamma_\alpha$ and $X^{-1} \gamma_{-\alpha}$, where $\alpha\from \R\to \R$ is continuous and $\gamma_\alpha:=\{(0,y,z)\mid y=\alpha(z)\}$ is the graph of $\alpha$ in the $yz$--plane.

Cutting $\Sigma$ along the $z$--axis produces two symmetric halves, one bounded by $\langle Z\rangle$ and $X \gamma_\alpha$ and one bounded by $\langle Z\rangle$ and $X^{-1} \gamma_{-\alpha}$. Given a function $\tau\in C^\infty_c(\R)$, we construct a surface $S_{\alpha,\tau}$ consisting of two ruled surfaces, one bounded by $\gamma_\tau$ and $X \gamma_\alpha$ and one bounded by $\gamma_\tau$ and $X^{-1} \gamma_{-\alpha}$. In  Section~\ref{sec:deform-strips}, we prove Theorem~\ref{thm:minimal-strips} by expanding $\area S_{\alpha, \lambda \tau}$ to second order in $\lambda$.

In Sections~\ref{sec:scaling-limits}--\ref{sec:proof-ruled-bernstein}, we apply Theorem~\ref{thm:minimal-strips} to show that Bernstein's theorem holds for the class of ruled intrinsic graphs.
\begin{thm}\label{thm:ruled-bernstein}
  An entire area-minimizing ruled intrinsic graph in $\HH$ is a vertical plane.
\end{thm}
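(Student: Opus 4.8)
The plan is to deduce Theorem~\ref{thm:ruled-bernstein} from Theorem~\ref{thm:minimal-strips} by passing to a blow-down at infinity. Write $\Sigma = \Gamma_f$. Since $\Sigma$ is ruled it is a union of horizontal lines, and no such line can have vanishing $X$-component: such a line would project under $\Pi$ to a single point of $V_0$, contradicting that $\Pi|_{\Sigma}$ is a bijection onto $V_0$. Hence each ruling $\ell$ has a well-defined slope $m(\ell)$, the slope of its projection to $\mathsf{A}$, and $\Pi(\ell)$ is a parabola in $V_0$ with leading coefficient $-\tfrac12 m(\ell)$. A ruled intrinsic graph is determined by the family of parabolas it projects to, and two parabolas with the same leading coefficient either coincide, are disjoint vertical translates of one another, or meet at exactly one point; since a horizontal line is determined by one of its points together with its slope, the last case would produce two distinct rulings of $\Sigma$ of equal slope through a common point, which is impossible. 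So it suffices to show that $m(\ell)$ takes a single value $m$ for all rulings $\ell$: then the projected parabolas are vertical translates of one parabola, and $\Sigma = \{(x,y,z) \mid y = mx + c\}$ for some $c \in \R$, which is a vertical plane.

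First I would record that restriction to slabs preserves minimality: for $a < b$ the set $\Sigma \cap \{a \le x \le b\}$ is an area-minimizing ruled intrinsic graph in the slab $\{a < x < b\}$, since any competitor there can be completed by $\Sigma$ outside the slab. After a left translation along $\langle X \rangle$ and a Heisenberg dilation $\delta_r(x,y,z) = (rx, ry, r^2 z)$ such a restriction becomes an area-minimizing ruled intrinsic graph over $K$; the difficulty is that it need not be symmetric about a vertical line, so it need not be a graphical strip and Theorem~\ref{thm:minimal-strips} does not apply to it directly. To get around this I would normalize $\Sigma$ by a left translation along $\langle Y \rangle$ so that $\0 \in \Sigma$, and take $\Sigma_\infty$ to be a dilation-invariant subsequential limit of the rescalings $\delta_{1/R}\Sigma$ as $R \to \infty$, using a compactness theorem for area-minimizing intrinsic graphs together with a monotonicity argument. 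Then $\Sigma_\infty$ is a dilation-invariant, area-minimizing, ruled, entire intrinsic graph, and $\0 \in \Sigma_\infty$.

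In Section~\ref{sec:scaling-limits} I would classify dilation-invariant ruled intrinsic graphs and show that, apart from the vertical planes $\{y = mx\}$, they are (shears of) the broken planes $\BP_u$, and that in each non-planar case the restriction to $U$ is a genuine graphical strip over $K$. Applying Theorem~\ref{thm:minimal-strips} to $\Sigma_\infty \cap U$ then excludes $\BP_u$ with $u > 0$: there the wedge rulings all pass through $\0$, whereas the theorem forces the form $\bigcup_z [(-1,-\sigma(z),z),(1,\sigma(z),z)]$, in which no two rulings meet. Hence $\Sigma_\infty$, being a plane through $\0$, equals $\{y = mx\}$ for some $m \in \R$.

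Finally I would exploit that Heisenberg dilations preserve the slope of a ruling. The ruling of $\Sigma$ whose projection passes through $(0,0,z) \in V_0$ has a slope $\beta(z)$; under $\delta_{1/R}$ it becomes the ruling of $\delta_{1/R}\Sigma$ whose projection passes through $(0,0,z/R^2)$, still of slope $\beta(z)$, and whose projected parabola is $Z = -\tfrac12\beta(z)X^2 + O(R^{-1})X + O(R^{-2})$. Letting $R \to \infty$, this parabola converges to $Z = -\tfrac12\beta(z)X^2$, which must be the projection of a ruling of $\Sigma_\infty = \{y = mx\}$ through $\0$; since $\0$ is a regular point of a vertical plane that ruling is unique, so $\beta(z) = m$. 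As every parabola in $V_0$ crosses $\{x = 0\}$, every ruling of $\Sigma$ contains a point whose projection lies on $\{x = 0\}$ and hence has slope $m$, so by the first paragraph $\Sigma$ is a vertical plane. I expect the main obstacle to be the middle steps: establishing the compactness and monotonicity that make $\Sigma_\infty$ a bona fide dilation-invariant tangent cone at infinity, and classifying the dilation-invariant ruled intrinsic graphs precisely enough --- in particular verifying that the non-planar ones restrict to honest graphical strips --- so that Theorem~\ref{thm:minimal-strips} can be brought to bear.
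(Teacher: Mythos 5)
Your overall skeleton is the paper's: blow down at infinity, identify the limit as a vertical plane or a broken plane, exclude the broken planes using Theorem~\ref{thm:minimal-strips}, and then conclude that constancy of the ruling slopes forces $\Sigma$ to be a vertical plane (your first paragraph is essentially the last step of the paper's Lemma~\ref{lem:scaling-limits-are-BPs}). The genuine gap is exactly the step you flag as the main obstacle, and it is not a technicality you can outsource: there is no compactness-plus-monotonicity package available here. A monotonicity formula for sub-Riemannian perimeter minimizers in $\HH$ (which is what you would need to make a subsequential blow-down dilation-invariant) is not established in this setting and is never used in the paper. The paper's key observation is that the ruled structure makes all of this unnecessary: the slopes of the rulings through the $z$--axis form a monotone function (Lemma~\ref{lem:monotone-slopes}, Corollary~\ref{cor:entire-ruled}), so the rescaled graph functions $f_t$ converge pointwise a.e.\ to an explicit limit determined by $m_{\pm\infty}$ (Lemma~\ref{lem:limit-f}); this gives a full, non-subsequential $L^1_{\mathrm{loc}}$ limit $\rot_\theta(\BP_u^+)$ of the epigraphs, and the only abstract ingredient needed is that $L^1_{\mathrm{loc}}$ limits of perimeter minimizers are perimeter minimizers (Proposition~\ref{prop:limits-of-min}), which the paper proves. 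Moreover, with this explicit limit in hand, $u=0$ immediately forces \emph{every} ruling of $\Gamma$ itself to have slope $m$, so your final rescaling argument (which would require justifying that rulings pass to the limit under mere $L^1_{\mathrm{loc}}$ convergence of epigraphs) is not needed either.

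A second concrete gap: your plan assumes the blow-down is again an \emph{entire intrinsic graph} and classifies only such limits, but this can fail. When the ruling slopes are unbounded the limit is $\BP_\infty^+=\{xz>0\}$, which is a perfectly good $L^1_{\mathrm{loc}}$ limit of epigraphs but is not the epigraph of any function on $V_0$; in particular it is not a graphical strip over $K$, so Theorem~\ref{thm:minimal-strips} cannot be invoked against it and your classification would silently omit it. The paper handles this case separately, by an explicit competitor argument: after translating by $Y^{-t}$ the two half-planes of $\partial\BP_\infty^+$ become nearly parallel and close together, so removing a unit ball near $Y^t$ strictly decreases perimeter, showing $\BP_\infty^+$ is not perimeter-minimizing. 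To repair your proof you would need either to prove the compactness/monotonicity statements you invoke (hard, and not in the literature for this setting) or, better, to replace them with the ruled-structure argument above, and in either case to add an argument excluding the degenerate limit $\BP_\infty$.
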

We prove Theorem~\ref{thm:ruled-bernstein} by showing that if $\Gamma$ is a ruled intrinsic graph, then its scaling limit is a plane or a broken plane (Section~\ref{sec:scaling-limits}). Scaling limits of perimeter-minimizing sets are perimeter-minimizing (Section~\ref{sec:proof-ruled-bernstein}), so if $\Gamma$ satisfies Theorem~\ref{thm:ruled-bernstein}, then its scaling limit is a plane. It then follows that $\Gamma$ itself is a plane.

Finally, in Section~\ref{sec:non-unique}, we prove Theorem~\ref{thm:non-unique}, and in Section~\ref{sec:competitors} we construct conjectural area-minimizing and energy-minimizing competitors for $\BP_u$, seen in Figure~\ref{fig:conj-surfs}. These competitors are each made up of two $Z$--graphs (graphs of an equation $z=f(x,y)$); one can show numerically that these surfaces have smaller area than $\BP_u$, but we do not know whether they minimize the area or the energy.

The question of finding the broadest class of surfaces in $\HH$ that satisfies Bernstein's theorem remains open. 
It is known that intrinsic Lipschitz graphs do not satisfy Bernstein's theorem; there are many area-minimizing entire intrinsic Lipschitz graphs that are not vertical planes. These can be highly singular; the first example of such a graph had a singularity along the $x$--axis \cite{PaulsHMinimal}, and the examples in \cite{GR2020} can be chosen to have a characteristic set (the set of points where the tangent plane to the surface is horizontal) whose closure has positive measure. In these examples, however, the surface fails to be $\COW$ near the singularities, so the Bernstein problem for $\COW$ graphs is an open question. 

\begin{remark}
  The 3D models used in the figures in this paper can be found in .obj format in the ancillary files on the arXiv page for this paper. These files can be opened in Preview on Macs, Paint 3D on Windows, or any 3D modeling program. They can also be found at \url{https://cims.nyu.edu/~ryoung/ruledBernstein/}.
\end{remark}

\noindent\textbf{Acknowledgments:} This material is based upon work supported by the National Science Foundation under Grant Nos.\ 2005609 and 1926686 and research done while the author was a visiting member at the Institute of Advanced Study. The author would like to thank Sebastiano Nicolussi Golo, Manuel Ritoré, and Richard Schwartz for their time and advice during the preparation of this paper and to thank the Institute of Advanced Study for its hospitality.

\section{Preliminaries and notation}\label{sec:prelims}

\subsection{The Heisenberg group}
The Heisenberg group $\HH$ is the $3$--dimensional simply connected Lie group with Lie algebra
$$\mathfrak{h}:=\langle X, Y, Z\mid [X,Y]=Z, [X,Z]=[Y,Z]=0\rangle.$$
We identify $\HH$ with $\mathfrak{h}$ via the Baker--Campbell--Hausdorff formula, i.e., $\HH=\langle X,Y,Z\rangle \cong \R^3$ and 
$$(x,y,z)\cdot(x',y',z')=\left(x+x',y+y',z+z'+\frac{xy'-yx'}{2}\right).$$
We use $X$, $Y$, and $Z$ to denote the coordinate vectors of $\R^3$ and the corresponding left-invariant fields $X_{(x,y,z)}=(1,0,-\frac{y}{2})$, $Y_{(x,y,z)}=(0,1,\frac{x}{2})$, and $Z_{(x,y,z)}=(0,0,1)$. Let $x,y,z\from\HH\to \R$ be the coordinate functions of $\HH$. Every vector $v\in \HH$ generates a one-parameter subgroup of $\HH$; we write $\langle v\rangle=\R v$ for this subgroup and define $v^t=tv$ for all $t\in \R$.

We equip $\HH$ with the Korányi norm $\|(x,y,z)\|_{\Kor}=\sqrt[4]{(x^2+y^2)^2+z^2}$ and the corresponding left-invariant distance $d(p,q)=\|p^{-1}q\|_{\Kor}$. For $a,b\in \R\setminus \{0\}$, let $s_{a,b}$ be the automorphism $s_{a,b}(x,y,z)=(ax,by,abz)$. We call automorphisms of the form $s_{t,t}$, $t>0$ \emph{scaling automorphisms}; these satisfy $\|s_{t,t}(p)\|_{\Kor}=t\|p\|_{\Kor}$ and $d(s_{t,t}(p),s_{t,t}(q))=td(p,q)$. For $p\in \HH$ and $r>0$, let $B(p,r):=\{q\in \HH\mid d(p,q)< r\}$ be the open ball of radius $r$ around $p$. The tangent planes spanned by $X$ and $Y$ are called the \emph{horizontal distribution}, and vectors in the horizontal distribution are called \emph{horizontal vectors}. A curve $\gamma\from I\to \HH$ whose coordinates are Lipschitz and such that $\gamma'(t)$ is a horizontal vector for almost every $t$ is called a \emph{horizontal curve}.

\subsection{The perimeter measure}
The \emph{sub-Riemannian perimeter} of a measurable subset $E\subset \HH$ on an open set $\Omega\subset \HH$ is given by
$$\Per_{E}(\Omega)=\sup\left\{ \int_E \div_{\mathbb{H}}(a X + bY) \ud \eta \mid a,b\in C^\infty_c(\Omega), a^2+b^2\le 1 \right\},$$
where the \emph{horizontal divergence} $\div_{\mathbb{H}}$ is defined by $\div_{\mathbb{H}}(a X+b Y) = Xa + Yb$ and $\eta$ is Lebesgue measure on $\HH$. This perimeter was introduced in \cite{FSSCRectifiability} as a Heisenberg analogue of perimeter in Euclidean space.  If $\Per_E(B(\0,r))<\infty$ for every $r>0$, we say that $E$ has \emph{locally finite perimeter}. The perimeter is lower semicontinuous, i.e., if $E_1,E_2,\dots$ have locally finite perimeter and $\one_{E_i}$ converges locally in $L_1$ to $\one_E$, then $E$ has locally finite perimeter and $\Per_{E}(\Omega)\le \liminf_{i\to \infty} \Per_{E_i}(\Omega)$ \cite[2.12]{FSSCRectifiability}.

Let $E\symdiff F:= (E\setminus F)\cup (F\setminus E)$ be the symmetric difference operator. For $E\subset \HH$ a measurable set and $\Omega\subset \HH$ an open set, we say that $E$ is a \emph{perimeter minimizer} in $\Omega$ if for every $r>0$ and every measurable $F\subset \HH$ such that satisfies $E\symdiff F\Subset B(\0,r)\cap \Omega$, we have $\Per_E(B(\0,r)\cap \Omega)\le \Per_F(B(\0,r)\cap \Omega)$. 

\subsection{Intrinsic graphs and intrinsic gradient}\label{sec:int-graphs}

For any $U\subset V_0$ and any function $f\from U\to \R$, we define $\Gamma_f=\{uY^{f(u)}\mid u\in U\}$ to be the \emph{intrinsic graph} of $f$ and $\Gamma^+_f=\{uY^{t}\mid t>f(u)\}$ to be the \emph{epigraph} of $f$. The intrinsic graph $\Gamma_f$ is parametrized by the function $\Psi_f\from U\to \HH$, $\Psi_f(u)=uY^{f(u)}$, and we define $\Pi(p)=pY^{-y(p)}$,
$$\Pi(x,y,z)=\left(x,0,z-\frac{xy}{2}\right),$$
to be the intrinsic projection from $\HH$ to $V_0$, so that $\Pi\circ\Psi_f=\id_U$.

Let $\nabla_f$ be the vector field $\nabla_f=X-fZ=(\Pi|_{\Gamma_f})_*(X)$ on $U$. We call this the \emph{intrinsic gradient}; we are particularly interested in $\nabla_ff$, which determines the tangent plane to $\Gamma_f$. The derivative $\nabla_ff$ exists when $f$ is $C^1$, but it can be defined distributionally when $f$ is merely $C^0$. 
If $f\from V_0\to \R$ is continuous, we say that $\nabla_ff$ exists in the sense of distributions if there is a function $\theta \in L_{\infty,\mathrm{loc}}$ such that for every $\psi\in C^1_c$,
$$\int_{V_0} \theta \psi \ud \mu= \int_{V_0} -f  \partial_x\psi + \frac{f^2}{2}\partial_z\psi\ud \mu,$$
where $\mu$ is Lebesgue measure on $V_0$.
If so, we write $\nabla_ff=\theta$.  When $f$ is $C^1$, this coincides with the previous definition.
For $U$ an open subset of $V_0$, we define $\COW(U)$ to be the set of continuous functions $f\from U\to \R$ such that $\nabla_ff$ is represented by a continuous function on $U$. This implies that $\Gamma_f$ is a regular surface in the sense of \cite{AVSCIntrinsic}.

One can also define a version of the Lipschitz condition adapted to the Heisenberg group. For $D\subset V_0$, we say that a function $f\from D\to \R$ is \emph{intrinsic Lipschitz} or that $\Gamma_f$ is an \emph{intrinsic Lipschitz graph} if there exists a $0<\lambda<1$ such that
$|y(p)-y(q)|\le \lambda d(p,q)$
for all $p,q\in \Gamma_f$. By Theorem~4.29 of \cite{FSSCDifferentiability}, if $U$ is an open set and $f\from U\to \R$ is an intrinsic Lipschitz function, then $f$ satisfies an intrinsic version of Rademacher's theorem in the sense that $\nabla_ff$ exists in the sense of distributions and $\|\nabla_ff\|_\infty$ is bounded by a function of $\lambda$. Conversely, if $f\in C^0(U)$ and $\nabla_ff\in L_\infty(U)$, then $f$ is locally intrinsic Lipschitz \cite{BiCaSC}.

Let $D\subset V_0$ and let $f\from D\to \R$ be an intrinsic Lipschitz graph. The area formula \cite[Thm.\ 1.6]{CMPSC} states that for any bounded open set $U\subset V_0$,
$$\Per_{\Gamma^+_f}(\Pi^{-1}(U))=\int_U \sqrt{1+(\nabla_f f)^2} \ud \mu.$$
More generally, we define
\begin{equation}\label{eq:area-formula}
  \area \Psi_f(W):=\int_W \sqrt{1+(\nabla_f f)^2} \ud \mu.
\end{equation}
for any measurable $W\subset D$. 
For a continuous function $f\from V_0\to \R$ and an open subset $\Omega\subset \HH$, we say that $\Gamma_f$ is \emph{area-minimizing} in $\Omega$ if $\Gamma_f^+$ is perimeter-minimizing in $\Pi^{-1}(\Omega)$. 

\subsection{Horizontal lines and characteristic curves}\label{sec:char}

Let $\pi\from \HH\to \mathsf{A}$, $\pi(x,y,z)=(x,y,0)$. 
A \emph{horizontal line} is a coset of the form $p \langle V\rangle$, where $V\in \mathsf{A}$. The \emph{slope} of a horizontal line $L$ is the slope of $\pi(L)$, i.e., $\slope(p \langle aX + bY\rangle)=\frac{b}{a}$.

Let $p=(x_p,y_p,z_p) \in \HH$ and $m\in \R$, and let $L=p\langle X+mY\rangle$ be the line of slope $m$ through $p$. Let $\lambda(t)=p(X+mY)^{t-x_p}$ parametrize $L$. Then
\begin{align}\label{eq:point-slope}
\notag  \Pi(\lambda(x))& = \Pi\left(p(X+mY)^{x-x_p}\right) = \Pi\left(x, y_p + m(x-x_p), z_p +\frac{x_p m(x-x_p) - y_p (x-x_p)}{2}\right)\\
\notag                 &= \left(x, 0, z_p - \frac{x_py_p}{2} - y_p (x-x_p) - \frac{m}{2}(x-x_p)^2\right) \\
                 & =
                   \Pi(p) + \left(x-x_p, 0, - y_p (x-x_p) - \frac{m}{2}(x-x_p)^2\right).
\end{align}
That is, $\Pi(L)$ is a parabola, and any parabola in $V_0$ is the projection of a unique horizontal line.

Given an open subset $U\subset V_0$ and a continuous function $f\from U\to \R$, the \emph{characteristic curves} of $\Gamma_f$ are the integral curves of the vector field $\nabla_f=X-fZ$.
By the Peano Existence Theorem, for every $p\in U$, there is a characteristic curve through $p$, but when $f$ is not Lipschitz, this curve may not be unique. Note, however, that two characteristic curves that meet at a point $p$ must have the same tangent vector at $p$.

The characteristic curves of $\Gamma_f$ are projections of horizontal curves; if $f\from V_0\to \R$ is an intrinsic Lipschitz function and $\lambda\from I\to V_0$ is a characteristic curve for $\Gamma_f$, then $\gamma=\Psi_f\circ \lambda$ is a horizontal curve in $\Gamma_f$; conversely, if $\gamma\from I\to \Gamma_f$ is a horizontal curve such that $x(\gamma(t))=t$ for all $t$, then $\Pi\circ \gamma$ is a characteristic curve.

When $\Gamma_f$ is a ruled surface, we can say more about its characteristic curves. By \eqref{eq:point-slope}, the projection of any ruling of $\Gamma_f$ is a parabola. If $R_1$ and $R_2$ are distinct rulings of $\Gamma_f$ such that $\Pi(R_1)$ and $\Pi(R_2)$ intersect at a point $p\in V_0$, then $\Pi(R_1)$ and $\Pi(R_2)$ must be tangent at $p$. It follows that $\Pi(R_1)$ and $\Pi(R_2)$ cannot cross. That is, the following lemma holds.
\begin{lemma}\label{lem:ordering-rulings}
  Let $R_1$ and $R_2$ be distinct rulings of \(\Gamma_f\). Let $g_{R_i}\from I_i\to \R$ be such that $\Pi(R_i)=\{(x,0,g_{R_i}(x))\mid x\in I_i\}$. Then $g_{R_1}(x) \le g_{R_2}(x)$ for all $x\in I_1\cap I_2$ or $g_{R_1}(x) \ge g_{R_2}(x)$ for all $x\in I_1\cap I_2$.
\end{lemma}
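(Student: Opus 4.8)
The plan is to reduce the lemma to an elementary fact about polynomials of degree at most two, using two ingredients already assembled above: (i) by \eqref{eq:point-slope}, the projection of a ruling $R_i$ is an arc of a parabola, i.e.\ $g_{R_i}$ is the restriction to $I_i$ of a polynomial of degree at most $2$; and (ii) if $\Pi(R_1)$ and $\Pi(R_2)$ meet at a point $p$, they are tangent there, since each is a characteristic curve of $\Gamma_f$ and any two characteristic curves through $p$ share the tangent vector $\nabla_f(p)$. Granting (i) and (ii), what remains is the assertion that two polynomials of degree at most $2$ whose graphs are tangent at each common point are comparable wherever both are defined.

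To carry this out I would first extend the $g_{R_i}$: let $\widetilde g_i\from\R\to\R$ be the polynomial of degree at most $2$ with $\widetilde g_i|_{I_i}=g_{R_i}$, so that $\{(x,0,\widetilde g_i(x))\mid x\in\R\}$ is the projection of the full horizontal line carrying $R_i$. If $\widetilde g_1=\widetilde g_2$, then $g_{R_1}=g_{R_2}$ on $I_1\cap I_2$ and both inequalities hold, so assume $\widetilde g_1\neq\widetilde g_2$ and set $h:=\widetilde g_1-\widetilde g_2$, a nonzero polynomial with $\deg h\le 2$. Since $R_1$ and $R_2$ are non-vertical segments — a vertical horizontal line is collapsed to a point by $\Pi$, while $\Pi|_{\Gamma_f}$ is injective — the sets $I_1$, $I_2$, and hence $I_1\cap I_2$, are intervals, and it suffices to show that $h$ does not change sign on $I_1\cap I_2$. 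Suppose instead that $h(x_1)$ and $h(x_2)$ have opposite signs for some $x_1,x_2\in I_1\cap I_2$. By the intermediate value theorem there is $x_0$ between $x_1$ and $x_2$, hence in $I_1\cap I_2$, with $h(x_0)=0$; then $p:=(x_0,0,g_{R_1}(x_0))\in\Pi(R_1)\cap\Pi(R_2)$, so by (ii) the two parabolas are tangent at $p$, which forces $h'(x_0)=0$ as well. A nonzero polynomial of degree at most $2$ with a root of multiplicity at least $2$ has the form $h(x)=c(x-x_0)^2$ with $c\neq 0$, and this has constant sign — contradicting the choice of $x_1,x_2$. Hence $h\ge 0$ on $I_1\cap I_2$ or $h\le 0$ on $I_1\cap I_2$, which is the claimed alternative.

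The substantive content lies entirely in ingredients (i) and (ii), and these are the points I would want airtight: that $\Pi$ sends a ruling, reparametrized by its $x$-coordinate, to an integral curve of $\nabla_f=X-fZ$ (so that the tangency in (ii) applies), and that a ruling of an intrinsic graph cannot be vertical (so that its projection is genuinely a graph $z=g(x)$ with $g$ quadratic). Both follow immediately from \eqref{eq:point-slope} and the injectivity of $\Pi|_{\Gamma_f}$, as in the discussion preceding the lemma, so beyond spelling them out the proof is just the one-line polynomial argument above; I do not expect any genuine obstacle here.
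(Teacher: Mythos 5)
Your proposal is correct and follows essentially the same route as the paper, which justifies the lemma by exactly your two ingredients — rulings project to parabolas by \eqref{eq:point-slope}, and characteristic curves of $\Gamma_f$ meeting at a point share the tangent vector $\nabla_f$ there, so tangent parabolas cannot cross. Your explicit polynomial argument (a nonzero $h=\widetilde g_1-\widetilde g_2$ of degree $\le 2$ with a double root has constant sign) simply spells out the paper's ``It follows that $\Pi(R_1)$ and $\Pi(R_2)$ cannot cross,'' and your verification that rulings are not cosets of $\langle Y\rangle$ is a correct, if already implicit, detail.
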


\section{Deformations of graphical strips}\label{sec:deform-strips}

Let $D\subset V_0$. A \emph{graphical strip} over $D$ is an intrinsic graph $\Gamma_f$ of a continuous function $f\from D\to \R$ such that $\Gamma_f$ is ruled and every ruling intersects the $z$--axis. (This definition differs slightly from the definitions found in \cite{NSCBernstein}, \cite{GRBernstein}, and \cite{DGNPStrips}, which assume that $f$ is Lipschitz or $C^2$ with respect to the Euclidean structure on $\R^2$.)

We mainly consider graphical strips $\Gamma$ over $K=\{(x,0,z)\in V_0\mid -1\le x\le 1\}$, in which case $\Gamma$ is symmetric around the $z$--axis (i.e., $s_{-1,-1}(\Gamma)=\Gamma$) and $\Gamma$ is determined by the intersection $\Gamma\cap \{x=1\}$. That is,
$$\Gamma=\bigcup_{p\in \Gamma\cap \{x=1\}} [p,s_{-1,-1}(p)],$$
where $[p_1,p_2]$ denotes the line segment from $p_1$ to $p_2$.

Let $\Gamma_f$ be a graphical strip over $K$ and let $\alpha\from \R\to \R$, $\alpha(w)=f(1,0,w)$. Then
$$\Gamma_f\cap \{x=1\}=X\gamma_\alpha,$$
where $\gamma_\alpha:=\{(0, \alpha(z) , z)\mid z\in \R\}$ is the graph of $\alpha$ in the $yz$--plane, and $\Gamma_f=S_\alpha$, where
\begin{align*}
  S_\alpha 
  &:= \bigcup_{w\in \R} \left[X^{-1} \cdot (0,-\alpha(w), w), X \cdot (0,\alpha(w), w)\right] \\ 
  &= \bigcup_{w\in \R} \left[\left(-1,-\alpha(w), w+\frac{\alpha(w)}{2}\right), \left(1,\alpha(w), w+\frac{\alpha(w)}{2}\right)\right].
\end{align*}
For $w\in \R$, let $\eta(w):= w+\frac{\alpha(w)}{2}$.

In general, for any continuous $\alpha\from \R\to \R$, we can define $S_\alpha$ as above. This is a ruled surface bounded by $X\gamma_\alpha$ and $X^{-1}\gamma_{-\alpha}$, but it is not always an intrinsic graph. In Section~\ref{sec:strip-prelims}, we will show the following lemma.
\begin{lemma}\label{lem:strip-injectivity}
  Let $S_\alpha$ be as above. Then $S_\alpha$ is a graphical strip over $K$ if and only if $\frac{\alpha(w_2)-\alpha(w_1)}{w_2-w_1} \ge -2$ for all $w_1<w_2$ and $\eta(\R)=\R$. 
\end{lemma}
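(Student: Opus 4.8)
The plan is to understand when the natural parametrization of $S_\alpha$ is injective and covers exactly $\Pi^{-1}(K)$, by projecting everything to $V_0$ via $\Pi$ and reducing to a statement about parabolas. For $w\in\R$, let $R_w$ be the ruling $\left[\left(-1,-\alpha(w),\eta(w)\right),\left(1,\alpha(w),\eta(w)\right)\right]$, which is the horizontal segment of slope $\alpha(w)$ passing through the point $(1,\alpha(w),\eta(w))$, and has $x$ running over $[-1,1]$. By \eqref{eq:point-slope}, $\Pi(R_w)$ is the graph over $x\in[-1,1]$ of a quadratic $g_w(x)$; a direct computation from \eqref{eq:point-slope} (taking $p=(1,\alpha(w),\eta(w))$, $m=\alpha(w)$) gives $g_w(x) = \eta(w) - \tfrac{\alpha(w)}2 - \alpha(w)(x-1) - \tfrac{\alpha(w)}2(x-1)^2 = w - \tfrac{\alpha(w)}2(x^2-1)$, so that in particular $g_w(\pm 1) = w$. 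Thus each ruling projects to a parabola interpolating the two boundary points $(\pm1,0,w)$, and $S_\alpha$ is an intrinsic graph over $K$ precisely when the parabolas $\{\Pi(R_w)\}_{w\in\R}$ are pairwise disjoint and their union is all of $K$.

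First I would prove the ``only if'' direction. If $S_\alpha$ is a graphical strip over $K$ then it is an intrinsic graph, so the map $w\mapsto R_w$ is injective and the $\Pi(R_w)$ are pairwise disjoint; since each meets $\{x=1\}$ at $(1,0,w)$ this is automatic in $w$ but forces the parabolas not to cross, and applying Lemma~\ref{lem:ordering-rulings} to $R_{w_1},R_{w_2}$ with $w_1<w_2$ (noting $g_{w_1}(1)=w_1<w_2=g_{w_2}(1)$ pins down which inequality holds) gives $g_{w_1}(x)\le g_{w_2}(x)$ for all $x\in[-1,1]$. Evaluating at $x=-1$: $g_{w_i}(-1)=w_i$ again, no information; so instead I evaluate the difference $g_{w_2}(x)-g_{w_1}(x) = (w_2-w_1) - \tfrac12(\alpha(w_2)-\alpha(w_1))(x^2-1)\ge 0$ on $[-1,1]$, where $x^2-1\in[-1,0]$. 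Taking $x=0$ gives $(w_2-w_1)+\tfrac12(\alpha(w_2)-\alpha(w_1))\ge0$, i.e. $\eta(w_2)\ge\eta(w_1)$, so $\eta$ is nondecreasing; the sharper constraint comes from letting $x^2-1\to 0^-$, which gives no new bound, and from $x^2-1\to -1$... Let me instead argue directly: nonnegativity of the affine-in-$(x^2-1)$ expression on the interval $x^2-1\in[-1,0]$ is equivalent to nonnegativity at the two endpoints, giving $w_2-w_1\ge 0$ and $w_2-w_1+\tfrac12(\alpha(w_2)-\alpha(w_1))\ge 0$, i.e. $\eta$ nondecreasing. Combined with continuity of $\eta$ and the requirement that $\Pi(S_\alpha)=K$ (which needs $\bigcup_w\{(x,0,g_w(x))\}=K$, and since $g_w(1)=w$ ranges over $\eta^{-1}$... more carefully over $w$ with $\eta(w)$ the height, surjectivity of the height map forces $\eta(\R)=\R$), we get $\eta(\R)=\R$. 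Finally, unwinding $\eta(w_2)\ge\eta(w_1)$, i.e. $w_2+\tfrac{\alpha(w_2)}2\ge w_1+\tfrac{\alpha(w_1)}2$ for $w_1<w_2$, is exactly $\frac{\alpha(w_2)-\alpha(w_1)}{w_2-w_1}\ge -2$.

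For the ``if'' direction, assume $\frac{\alpha(w_2)-\alpha(w_1)}{w_2-w_1}\ge-2$ for all $w_1<w_2$ and $\eta(\R)=\R$; equivalently $\eta$ is nondecreasing, continuous, and surjective. I claim the parabolas $\Pi(R_w)$ are pairwise disjoint and cover $K$. Disjointness: for $w_1<w_2$, the difference $g_{w_2}(x)-g_{w_1}(x)=(w_2-w_1)+\tfrac12(\alpha(w_1)-\alpha(w_2))(x^2-1)$; on $x\in[-1,1]$ we have $x^2-1\in[-1,0]$, and using $\alpha(w_2)-\alpha(w_1)\ge -2(w_2-w_1)$, i.e. $\alpha(w_1)-\alpha(w_2)\le 2(w_2-w_1)$, one checks the expression is bounded below by $(w_2-w_1)\big(1+(x^2-1)\big)=(w_2-w_1)x^2\ge 0$ when $\alpha(w_1)-\alpha(w_2)\le 2(w_2-w_1)$, hence $\ge 0$; moreover it equals $0$ only if $w_1=w_2$ or ($x=0$ and equality holds throughout), and one rules out an actual crossing point, so distinct rulings project to disjoint parabolas and $S_\alpha$ is an injective intrinsic graph. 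Coverage: given $(x_0,0,z_0)\in K$ with $x_0\in(-1,1)$, the function $w\mapsto g_w(x_0)= w-\tfrac{\alpha(w)}2(x_0^2-1) = (1-\tfrac{x_0^2-1}{2})w ... $ — more cleanly, $g_w(x_0)$ can be written as an increasing reparametrization argument: since $\eta$ is continuous and surjective, for the boundary cases $x_0=\pm1$ we have $g_w(\pm1)=w$ hitting all values, and for interior $x_0$ one shows $w\mapsto g_w(x_0)$ is continuous with $g_w(x_0)\to\pm\infty$ as $w\to\pm\infty$ (using the slope bound to control $\alpha(w)$), hence surjective by the intermediate value theorem, so some ruling passes through $(x_0,0,z_0)$. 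This gives $\Pi(S_\alpha)=K$, completing the proof that $S_\alpha$ is a graphical strip over $K$.

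The main obstacle I anticipate is the bookkeeping around the endpoints $x=\pm1$ of each ruling: since all parabolas share the boundary points $(\pm1,0,w)$-structure only through the height $w$, the injectivity and non-crossing conditions have to be extracted from the behavior on the open interval $(-1,1)$, and one must be careful that ``disjoint parabolas'' really is equivalent to ``$S_\alpha$ is an intrinsic graph'' (a point $p\in V_0$ could in principle be covered by parabolas that are tangent there without crossing — Lemma~\ref{lem:ordering-rulings} and the explicit quadratic formula are what prevent this, since two distinct rulings of a would-be intrinsic graph project to parabolas that are either ordered or tangent, and tangency at an interior point with matching boundary heights forces the parabolas to be equal). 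The surjectivity/coverage step, translating $\eta(\R)=\R$ into ``every point of $K$ lies on some ruling,'' is the other place where continuity and the intermediate value theorem must be invoked carefully, but it is routine once the monotone reparametrization picture ($w\mapsto\eta(w)$) is in hand.
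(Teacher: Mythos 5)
Your reduction of the ``if'' direction to ``the parabolas $\Pi(R_w)$ are pairwise disjoint and cover $K$'' is where the argument breaks. Pairwise disjointness is simply false under the hypotheses of the lemma: whenever $\eta$ is nondecreasing but not injective, say $\eta\equiv z_0$ on an interval $[w_1,w_2]$ (equivalently, $\alpha$ has slope exactly $-2$ there), all the rulings $R_w$ with $w\in[w_1,w_2]$ pass through the single point $(0,0,z_0)$ on the $z$--axis, and their projections are distinct parabolas that all pass through $(0,0,z_0)$, tangentially. This is exactly what happens for the broken plane $\BP_u\cap\Pi^{-1}(K)$, $u>0$, which the paper records as a graphical strip over $K$ (there the parabolas are $z=wx^2$, all through the origin). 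So ``one rules out an actual crossing point'' cannot be done, and your fallback claim in the last paragraph --- that tangency at an interior point with matching boundary heights forces the parabolas to be equal --- is vacuous/false here, since the boundary heights $g_w(\pm1)=w$ of distinct rulings never match and yet the parabolas do meet at $x=0$. Indeed your own estimate only gives $g_{w_2}(x)-g_{w_1}(x)\ge (w_2-w_1)x^2$, which vanishes at $x=0$. The intrinsic-graph property at $x=0$ must instead be checked directly: the only points of $S_\alpha$ lying over $(0,0,z_0)$ are the midpoints $(0,0,\eta(w))$ of rulings (their $y$--coordinate $x\alpha(w)$ vanishes), so they coincide whenever their projections do. This is how the paper proceeds --- monotonicity and injectivity of $w\mapsto g_w(x)$ are claimed only for $x\ne 0$, and the fibre over the $z$--axis is handled separately. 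As written, your ``if'' direction asserts a false intermediate statement and therefore does not establish that $S_\alpha$ is an intrinsic graph.

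There is a second, independent gap: a graphical strip is by definition the intrinsic graph of a \emph{continuous} function on $K$, and you never verify continuity of the function $f$ that injectivity plus coverage implicitly define. This is not automatic: the paper shows the projection of the parametrization is an open map (hence a homeomorphism off the $z$--axis) and then checks continuity along the $z$--axis separately, using a bound of the form $|f(x,0,z)|\le C|x|$ with $C$ a bound for $|\alpha|$ on a suitable compact interval. By contrast, your ``only if'' direction is essentially sound and close in spirit to the paper's (the paper argues directly that a slope $<-2$ produces two points with equal projection but different $y$--coordinates, while you invoke Lemma~\ref{lem:ordering-rulings} and evaluate at $x=0$; either works), except that the justification of $\eta(\R)=\R$ is garbled --- it should simply read: $\Pi(S_\alpha)\cap\{x=0\}=\{(0,0,\eta(w))\mid w\in\R\}$, so $\Pi(S_\alpha)=K$ forces $\eta$ to be surjective.
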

Note in particular that if $S_\alpha$ is a graphical strip, then $\eta$ is a non-decreasing function; $\eta$ is increasing if and only if the rulings of $S_\alpha$ do not intersect.

A notable example of a graphical strip is the broken plane illustrated in Figure~\ref{fig:broken-plane}.
\begin{definition}\label{def:broken-plane}
  Let $u\ge 0$. 
  The \emph{broken plane} $\BP_u$ consists of two vertical half-planes of slope $\pm u$ connected by two wedges in the $xy$--plane. (See Figure~\ref{fig:broken-plane}.) That is,
  $$\BP_u := \{(x, -ux ,z)\mid x\in \R, z\ge 0\}
    \cup \{(x, ux ,z)\mid x\in \R, z\le 0\}
    \cup \{(x,y,0)\mid |y| \le u|x|\}.$$
  As $u\to \infty$, this converges to the set
  $$\BP_\infty := \langle Y,Z\rangle \cup \langle X, Y\rangle$$
\end{definition}

When $0<u<\infty$, $\BP_u=\Gamma_{b_u}$, where
$$b_u(x,0,z) = \begin{cases}
  u x & z < - \frac{u}{2} x^2 \\
  \frac{- 2 z}{x}& |z| \le \frac{u}{2} x^2 \\
  - u x & z > \frac{u}{2} x^2,
\end{cases}$$
and we define $\BP^+_u=\Gamma_{b_u}^+$. This is the union of two quadrants in $\HH$, one bounded by the $xy$--plane and the upper half-plane with slope $-u$ and one bounded by the $xy$--plane and the lower half-plane with slope $u$.

As $u\to \infty$, these epigraphs converge to the set
$$\BP_\infty^+ := \{(x,y,z)\in \HH\mid xz > 0\}.$$  

For any $0 \le u < \infty$, $\BP_u\cap \Pi^{-1}(K)$ is a graphical strip over $K$, and $\BP_u\cap \Pi^{-1}(K)=S_\alpha$, where
$$\alpha(z) = b_u(1,0,z) = \begin{cases}
  u & z < - \frac{u}{2} \\
  -2 z & |z| \le \frac{u}{2} \\
  -u & z > \frac{u}{2}.
\end{cases}$$

The main goal of this section is to prove the following  characterization of area-minimizing graphical strips over $K$. Let $U$ be the interior of $\Pi^{-1}(K)$, i.e., $U=\{(x,y,z)\in \HH\mid -1<x<1\}$. 
\begin{prop}\label{prop:strips}
  Let $S_\alpha$ be a graphical strip over $K$. Then $S_\alpha$ is area-minimizing on $U$ if and only if $\frac{\alpha(w_2)-\alpha(w_1)}{w_2-w_1} \ge -1$ for all $w_1<w_2$. 
\end{prop}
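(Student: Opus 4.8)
The plan is to reduce the problem to an analysis of the area functional $\area S_{\alpha,\lambda\tau}$ of the two-piece deformations $S_{\alpha,\tau}$ described in the introduction, and to match this against the monotone-epigraph criterion used for the sufficiency direction. I would organize the argument around three conversions: first translate the slope condition $\frac{\alpha(w_2)-\alpha(w_1)}{w_2-w_1}\ge -1$ into a condition on the function $\eta(w)=w+\tfrac{\alpha(w)}{2}$, namely that $\frac{\eta(w_2)-\eta(w_1)}{w_2-w_1}\ge \tfrac12$, i.e.\ that $\eta$ is ``strongly monotone''; equivalently, writing $z$ for the height at which a ruling of $S_\alpha$ crosses $x=1$ (which is $\eta(w)$), the condition says that consecutive rulings separate in the $z$-direction at least linearly in $w$. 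This is exactly the shape of the hypothesis in Theorem~\ref{thm:minimal-strips}, so I would first reconcile the two formulations via Lemma~\ref{lem:strip-injectivity} and a direct computation, thereby reducing Proposition~\ref{prop:strips} to Theorem~\ref{thm:minimal-strips}; but since the point of this section is to prove the key step of that theorem, I treat the two directions separately.

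\emph{Necessity (if $S_\alpha$ is area-minimizing then the slope bound holds).} I would argue by contrapositive: suppose there exist $w_1<w_2$ with $\frac{\alpha(w_2)-\alpha(w_1)}{w_2-w_1}<-1$. The idea is to exhibit a competitor with strictly smaller area supported near the rulings at heights $w_1,w_2$. Concretely, pick $\tau\in C^\infty_c(\R)$ supported near the relevant parameter interval and form the family $S_{\alpha,\lambda\tau}$, which for small $\lambda$ is a piecewise-ruled surface with the same boundary $X\gamma_\alpha\cup X^{-1}\gamma_{-\alpha}$ as $S_\alpha$ (the cut curve $\gamma_{\lambda\tau}$ is interior). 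The heart of the matter is to compute $\area S_{\alpha,\lambda\tau}=\area S_\alpha + A_1(\tau)\lambda + A_2(\tau)\lambda^2 + o(\lambda^2)$; this is Proposition~\ref{prop:vf-strips}. I expect $A_1$ to vanish by the symmetry of the two halves (or by choosing $\tau$ appropriately), so that the sign of the second variation $A_2(\tau)$ governs stability. The slope defect $\frac{\alpha(w_2)-\alpha(w_1)}{w_2-w_1}<-1$ should translate, in the integrand of $A_2$, into a region where a quadratic form in $\tau$ and its derivative is \emph{not} positive semidefinite — intuitively because the rulings of $S_\alpha$ are crowding together in $z$ faster than the ``$-1$'' threshold, and the relevant coefficient (something like $\eta'-\tfrac12$ paired against $(\tau')^2$ or a cross term) changes sign. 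Then a localized choice of $\tau$ makes $A_2(\tau)<0$, contradicting area-stability and hence area-minimality.

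\emph{Sufficiency (the slope bound implies area-minimizing in $U$).} Here I would use the calibration/monotone-epigraph approach flagged in the abstract: show that when $\frac{\alpha(w_2)-\alpha(w_1)}{w_2-w_1}\ge -1$ for all $w_1<w_2$, the epigraph $S_\alpha^+$ is a \emph{monotone} set in $U$ in the sense that it is (a translate/shear of) a set whose indicator is monotone along a suitable family of horizontal lines, equivalently that $S_\alpha^+$ can be calibrated by the horizontal vector field normal to $S_\alpha$ extended as constant along the rulings. The slope bound is precisely what guarantees this extended field is well-defined and has horizontal divergence zero: since $\eta$ is strongly increasing the rulings foliate $U$ without crossing (Lemma~\ref{lem:strip-injectivity} plus the strict version), and one checks the unit horizontal normal is constant along each ruling, so $\div_\HH$ of the calibration vanishes; the standard divergence-theorem comparison then shows $\Per_{S_\alpha^+}(B(\0,r)\cap U)\le \Per_F(B(\0,r)\cap U)$ for every competitor $F$ with $F\symdiff S_\alpha^+\Subset B(\0,r)\cap U$. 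I would cite the monotonicity/calibration machinery for intrinsic graphs (e.g.\ as in the treatment of intrinsic Lipschitz minimizers referenced in the introduction) rather than redo it.

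The main obstacle, as I see it, is the second-variation computation and its sign analysis in the necessity direction: one must parametrize $S_{\alpha,\lambda\tau}$ carefully (the two ruled pieces meet along the moving curve $\gamma_{\lambda\tau}$, and the rulings on each piece are determined by matching endpoints on $\gamma_{\lambda\tau}$ with the fixed boundary), expand the area integrand $\sqrt{1+(\nabla_f f)^2}$ — or rather the direct ruled-surface area formula — to second order, and identify the quadratic form cleanly enough that the threshold constant $-1$ drops out. A secondary subtlety is regularity: $\alpha$ is only continuous, so ``$\frac{\alpha(w_2)-\alpha(w_1)}{w_2-w_1}<-1$ for some $w_1<w_2$'' must be upgraded to a statement on a set of positive measure (or to a clean pointwise statement about $\eta'$ where it exists) before a smooth localized $\tau$ can be inserted; I would handle this by first reducing to the case where $\alpha$ is smooth via an approximation and lower-semicontinuity argument, proving the inequality for smooth $\alpha$, and then passing to the limit.
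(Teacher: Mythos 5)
Your sufficiency argument contains a genuine gap, and it is the decisive one. You propose to calibrate $S_\alpha^+$ by extending the horizontal unit normal ``constant along the rulings'' and assert that the bound $\frac{\alpha(w_2)-\alpha(w_1)}{w_2-w_1}\ge -1$ is precisely what makes this extension well-defined and $\div_{\mathbb{H}}$--free, because $\eta$ is then strictly increasing and the rulings do not cross. But non-crossing of the rulings corresponds to the weaker threshold $-2$, not $-1$: for example $\alpha(w)=-\tfrac32 w$ satisfies the hypotheses of Lemma~\ref{lem:strip-injectivity}, gives the smooth ruled graph $\{(x,-6xz,z)\}$ with pairwise disjoint rulings, and by the necessity direction (Lemma~\ref{lem:min-implies-bound}) it is \emph{not} area-minimizing in $U$; your criterion would ``calibrate'' it, so the argument proves too much. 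The underlying error is that constancy of the normal along rulings only makes the tangential minimal-surface operator vanish \emph{on the surface}; the natural extension of the normal to the slab (say, constant along cosets of $\langle Y\rangle$, writing the normal components as functions $A\circ \Pi$) has horizontal divergence $\nabla_f A + (f-y)\,\partial_z A$ off the surface, and the vertical-derivative term does not vanish for ruled graphs. Nowhere in your sketch does the constant $-1$ actually enter. The paper's sufficiency proof is structurally different: by Lemma~\ref{lem:alternate} the bound on $\alpha$ is equivalent to $-2\le \frac{\sigma(z_2)-\sigma(z_1)}{z_2-z_1}<2$ for the reparametrization by height $z$, and Lemma~\ref{lem:bound-implies-min} shows by a direct horizontal-plane computation that two interior points with distinct $z$ lie on a common horizontal line only if $x_1x_2\frac{\sigma(z_2)-\sigma(z_1)}{z_2-z_1}=2$, which is impossible for $|x_i|<1$; hence almost every horizontal line meets $S_\alpha\cap U$ at most once, $S_\alpha^+$ is monotone, and monotone sets are perimeter-minimizing by the kinematic formula (Proposition~\ref{prop:monotone-implies-min}), an integral-geometric argument rather than a calibration.

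Your necessity direction follows the paper's strategy (two-piece ruled deformation $S_{\alpha,\lambda\tau}$, first-order term cancelling between the two halves, sign of the second variation), but two steps need repair. First, $S_{\alpha,\lambda\tau}$ is not by itself an admissible competitor: although it has the same boundary curves on $\{x=\pm1\}$, the symmetric difference of the epigraphs is not compactly contained in the open slab $U$, so minimality of $S_\alpha$ in $U$ says nothing about it; the paper fixes this by comparing inside the dilated surface $s_{1+\epsilon,1+\epsilon}(S_\alpha)$ and letting $\epsilon\to 0$ only at the very end. Second, your plan to handle the low regularity of $\alpha$ by smoothing and lower semicontinuity does not close as stated, since area-minimality neither passes to the smooth approximations nor returns from them; in the paper the same dilation trick makes the relevant function $\hat\alpha$ Lipschitz for free (via Lemma~\ref{lem:ordering-rulings}, because the scaled rulings overshoot the slab), after which Proposition~\ref{prop:vf-strips} gives the second variation $\II(\tau)=\int \tau(\eta(w))^2\,(1+\alpha'(w))\,(1+\alpha(w)^2)^{-3/2}\ud w$ --- note it pairs $(1+\alpha')$ against $\tau^2$, not $(\tau')^2$ --- which is made negative at a Lebesgue point where $\hat\alpha'<-1$, with a separate argument when $\eta$ fails to be injective.
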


By the following lemma, this characterization is equivalent to the characterization in Theorem~\ref{thm:minimal-strips}.
\begin{lemma}\label{lem:alternate}
  Let $S_\alpha$ be a graphical strip over $K$. Then $\frac{\alpha(w_2)-\alpha(w_1)}{w_2-w_1} \ge -1$ for all $w_1<w_2$ if and only if there is a function $\sigma\from \R\to \R$ such that
  $-2 \le \frac{\sigma(z_2) - \sigma(z_1)}{z_2 - z_1} < 2$ for all $z_1<z_2$ and
  $$S_\alpha = \bigcup_{z\in \R} \big[(-1, -\sigma(z), z), (1, \sigma(z), z)\big].$$
\end{lemma}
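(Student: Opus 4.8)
The plan is to work throughout with the auxiliary function $\eta(w)=w+\frac{\alpha(w)}{2}$, which records the common $z$--coordinate of the two endpoints of the ruling of $S_\alpha$ through $(1,0,w)$: that ruling is exactly $\{(x,x\alpha(w),\eta(w))\mid -1\le x\le 1\}$. The two descriptions of $S_\alpha$ in the statement then correspond to parametrizing the rulings by $w$ (via $\alpha$) or by their height $z$ (via $\sigma$), and the key point in each direction is that $\eta$ is a strictly increasing homeomorphism of $\R$, after which $\sigma$ and $\alpha$ are related by $\sigma=\alpha\circ\eta^{-1}$ (equivalently $\alpha=\sigma\circ\eta$) and the two slope bounds translate into one another by an elementary computation with the M\"obius maps $r\mapsto\frac{2r}{2+r}$ and $s\mapsto\frac{2s}{2-s}$.

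For the forward implication, assume $\frac{\alpha(w_2)-\alpha(w_1)}{w_2-w_1}\ge -1$ for all $w_1<w_2$. Then $\frac{\eta(w_2)-\eta(w_1)}{w_2-w_1}=1+\tfrac12\cdot\frac{\alpha(w_2)-\alpha(w_1)}{w_2-w_1}\ge\tfrac12$, so $\eta$ is continuous (as $\alpha$ is) and strictly increasing with $\eta(w)\to\pm\infty$, hence a homeomorphism of $\R$. Setting $\sigma:=\alpha\circ\eta^{-1}$, the ruling through $(1,0,w)$ equals $[(-1,-\sigma(\eta(w)),\eta(w)),(1,\sigma(\eta(w)),\eta(w))]$, and since $w\mapsto\eta(w)$ is a bijection of $\R$ this yields $S_\alpha=\bigcup_{z\in\R}[(-1,-\sigma(z),z),(1,\sigma(z),z)]$. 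For $z_1<z_2$, writing $w_i=\eta^{-1}(z_i)$ and $r=\frac{\alpha(w_2)-\alpha(w_1)}{w_2-w_1}\ge -1$, one computes $\frac{\sigma(z_2)-\sigma(z_1)}{z_2-z_1}=\frac{\alpha(w_2)-\alpha(w_1)}{\eta(w_2)-\eta(w_1)}=\frac{r}{1+r/2}=\frac{2r}{2+r}$; since $r\mapsto\frac{2r}{2+r}$ is increasing on $[-1,\infty)$ with value $-2$ at $r=-1$ and supremum $2$, this quantity lies in $[-2,2)$, as required.

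For the reverse implication, suppose such a $\sigma$ exists. Each ruling $[(-1,-\sigma(z),z),(1,\sigma(z),z)]$ lies in the horizontal plane of height $z$, so $S_\alpha\cap\{z=z_0\}$ is a single line segment for every $z_0$, and in particular $S_\alpha\cap\{x=1\}=\{(1,\sigma(z),z)\mid z\in\R\}$ is the graph of $\sigma$. Because $S_\alpha$ is a graphical strip over $K$, Lemma~\ref{lem:strip-injectivity} and the remark after it give that $\eta$ is non-decreasing. If $\eta(w_1)=\eta(w_2)=z_0$ with $w_1\ne w_2$, then the $x=1$ endpoints $(1,\alpha(w_1),z_0)$ and $(1,\alpha(w_2),z_0)$ both lie on the single segment $S_\alpha\cap\{z=z_0\}$, forcing $\alpha(w_1)=\sigma(z_0)=\alpha(w_2)$ and hence $w_1=w_2$, a contradiction; so $\eta$ is strictly increasing. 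Now fix $w_1<w_2$ and set $z_i=\eta(w_i)$, so $z_1<z_2$. The point $(1,\alpha(w_i),z_i)\in S_\alpha\cap\{x=1\}$ shows $\alpha(w_i)=\sigma(z_i)$, and combining $z_2-z_1=(w_2-w_1)+\tfrac12(\alpha(w_2)-\alpha(w_1))$ with $\frac{\sigma(z_2)-\sigma(z_1)}{z_2-z_1}\ge -2$ (valid since $z_1<z_2$) and rearranging gives $\alpha(w_2)-\alpha(w_1)\ge -(w_2-w_1)$.

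The only step that is more than bookkeeping is the proof that $\eta$ is strictly increasing in the reverse direction, i.e.\ ruling out intersecting rulings; this is exactly where the $\sigma$--description is used, since it foliates $S_\alpha$ by horizontal planes and thereby forces each such plane to meet $S_\alpha$ in a single segment, which is incompatible with two distinct rulings sharing a plane. The remaining content is the dictionary between the two parametrizations of the rulings and the elementary monotonicity of the relevant M\"obius functions.
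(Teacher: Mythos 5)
Your proof is correct and follows essentially the same route as the paper: in both directions the argument runs through the reparametrization $\sigma=\alpha\circ\eta^{-1}$ (equivalently $\sigma\circ\eta=\alpha$) with $\eta(w)=w+\frac{\alpha(w)}{2}$, and the slope bounds are translated by the same elementary algebra. The only differences are cosmetic: you package the forward bound as monotonicity of $r\mapsto\frac{2r}{2+r}$ where the paper writes out two inequalities, and you spell out the strict monotonicity of $\eta$ (no two rulings at the same height) that the paper asserts in one line.
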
 
We will prove Lemma~\ref{lem:alternate} in  Section~\ref{sec:strip-prelims}.

There are two main ideas to the proof of Proposition~\ref{prop:strips}. First, in Section~\ref{sec:deforming-ruled}, we will show that a perimeter-minimizing graphical strip satisfies $\frac{\alpha(w_2)-\alpha(w_1)}{w_2-w_1} \ge -1$ for all $w_1<w_2$ by defining a family $S_{\alpha,\tau}$ of deformations of $S_\alpha$. These deformations are parametrized by functions $\tau\in C^\infty_c(\R)$. Each surface $S_{\alpha,\tau}$ is a union of a ruled graph over the strip $K^+=[0,1] \times \{0\} \times \R$ and a ruled graph over the strip $K^-=[-1,0] \times \{0\} \times \R$, with common boundary $\gamma_\tau$. We will show that the area of these deformations satisfies the following formula. 
\begin{prop}\label{prop:vf-strips}
  Let $S_\alpha$ be a graphical strip over $K$ and suppose that $\alpha\from \R\to \R$ is Lipschitz.
  
  Let $w_1<w_2$ be such that $\eta^{-1}(\eta(w_i))=\{w_i\}$ and let $D=\{(x,y,z) \in \HH\mid \eta(w _1)\le z\le \eta(w _2) \}$. Let $\tau\in C^\infty_c([\eta(w_1),\eta(w_2)])$. Then there is a $C=C(\tau)>0$ such that for all $\lambda \in (-C^{-1},C^{-1})$, 
  \begin{equation}\label{eq:strip-var}
    \left| \area (D \cap S_{\alpha,\lambda \tau}) - \area (D \cap S_{\alpha}) - \lambda^2 \II(\tau)\right|\le C (w_2-w_1) |\lambda|^3,
  \end{equation}
  where
  \begin{align*}
    \II(\tau) 
    &= \int_{-\infty}^{\infty} \frac{\tau(\eta(w))^2}{(1+\alpha(w)^2)^{\frac{3}{2}}}(1+\alpha'(w))\ud w 
  \end{align*}
\end{prop}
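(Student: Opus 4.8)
The plan is to compute $\area(D\cap S_{\alpha,\lambda\tau})$ directly from the area formula \eqref{eq:area-formula} by parametrizing the two ruled halves of $S_{\alpha,\lambda\tau}$ explicitly and expanding in $\lambda$. Recall that $S_{\alpha,\lambda\tau}$ consists of a ruled surface joining $\gamma_{\lambda\tau}=\{(0,\lambda\tau(z),z)\}$ to $X\gamma_\alpha$ over $K^+=[0,1]\times\{0\}\times\R$, together with its mirror image over $K^-$. Since $\alpha$ is Lipschitz and $\tau$ is smooth with compact support in $[\eta(w_1),\eta(w_2)]$, for $\lambda$ small the endpoints match up in a way that makes each half an intrinsic graph; the first step is to pin down, for each $z$ in the relevant range, which point $(0,\lambda\tau(v),v)$ of $\gamma_{\lambda\tau}$ is joined by a ruling to which point $(1,\alpha(w),\cdot)$ of $X\gamma_\alpha$, i.e.\ to solve the matching equation coming from \eqref{eq:point-slope} for the horizontal line through two prescribed points. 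This gives a function $w\mapsto v(w,\lambda)$ with $v(w,0)=w$ (the unperturbed ruling hits the axis at height $\eta(w)$, matching $\gamma_0$), and one differentiates this relation in $\lambda$ at $\lambda=0$ to get $\partial_\lambda v$.

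Next I would write the area of the half-strip over $K^+$ as an integral in the ruling parameter $w$ and the coordinate $x\in[0,1]$ along the ruling, using that along a single horizontal ruling the integrand $\sqrt{1+(\nabla_f f)^2}\,\mathrm{d}\mu$ has a controlled form — in fact for a ruled graph the area contribution of a ruling of slope $m$ is governed by $\sqrt{1+m^2}$ times the ``width'' in the $z$-direction swept out, by the same computation as in \eqref{eq:point-slope}. The key point is that $S_\alpha$ itself ($\lambda=0$) has each ruling going from $(-1,-\alpha(w),\cdot)$ to $(1,\alpha(w),\cdot)$ with slope $\alpha(w)$, contributing a factor $\sqrt{1+\alpha(w)^2}$; and the change in area when we replace the axis-endpoint at height $\eta(w)$ by the perturbed curve point $(0,\lambda\tau(v),v)$ is, to leading order, a purely second-order quantity because the unperturbed configuration is area-stationary among these ruled competitors with fixed outer boundary (the first-order term integrates to a boundary term that vanishes since $\tau$ has compact support). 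Carrying the expansion to second order, the Jacobian factors and the $\sqrt{1+\alpha^2}$ weights combine — after a change of variables from $z$ to $w$ via $z=\eta(w)$, which has $\mathrm{d}z=\eta'(w)\,\mathrm{d}w=(1+\tfrac{\alpha'(w)}{2})\,\mathrm{d}w$ — to produce exactly the weight $(1+\alpha'(w))/(1+\alpha(w)^2)^{3/2}$ and the quadratic dependence $\tau(\eta(w))^2$. Doubling for the two symmetric halves over $K^\pm$ gives $\II(\tau)$ as stated. The cubic remainder estimate then follows from Taylor's theorem with the $C^3$-type bounds on the integrand, with the $(w_2-w_1)$ factor coming from the length of the interval of integration, since $\tau$ and its derivatives are bounded in terms of $\tau$ alone (the constant $C=C(\tau)$).

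The main obstacle I expect is the bookkeeping in the matching map $v(w,\lambda)$ and, more seriously, verifying that for $|\lambda|$ small enough the deformed surface $S_{\alpha,\lambda\tau}$ really is a graphical strip (or at least has a well-defined area via \eqref{eq:area-formula}) over the relevant region — this is where the hypotheses $\eta^{-1}(\eta(w_i))=\{w_i\}$ and the Lipschitz bound on $\alpha$ are used, to ensure the rulings of the two halves don't collide and the intrinsic-graph property is preserved on $D$. One must show the perturbation stays within the regime of Lemma~\ref{lem:strip-injectivity}, controlled uniformly in $\lambda$ on the compact $z$-range where $\tau\ne 0$; away from $\supp\tau$ the surface is unchanged, so only a compact piece needs checking. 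Once injectivity is in hand, the rest is a controlled Taylor expansion of an explicit integral, and the only delicate analytic point is that the $\lambda^3$ error is genuinely uniform — which again reduces to the Lipschitz bound on $\alpha'$ entering the integrand and the fixed support of $\tau$.
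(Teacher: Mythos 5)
Your overall route is the same as the paper's (parametrize the two ruled halves explicitly, solve the matching between $\gamma_{\lambda\tau}$ and $X\gamma_\alpha$, compute the area of a ruled half via the area formula, then Taylor-expand in $\lambda$ and change variables $z=\eta(w)$), but the step you use to kill the first-order term is wrong as stated. You claim the first variation vanishes because the unperturbed strip is area-stationary among these ruled competitors, "the first-order term integrates to a boundary term that vanishes since $\tau$ has compact support." It does not. For a single half the first variation is
\begin{equation*}
  \mp\lambda\int \frac{\alpha(w)}{\sqrt{1+\alpha(w)^2}}\,\tau(\eta(w))\Bigl(1+\tfrac{\alpha'(w)}{2}\Bigr)\ud w
  \;=\;\mp\lambda\int \frac{\alpha(\eta^{-1}(z))}{\sqrt{1+\alpha(\eta^{-1}(z))^2}}\,\tau(z)\ud z,
\end{equation*}
which is not a total derivative and is nonzero in general (take $\alpha\equiv c\ne 0$ and $\int\tau\ne 0$). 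The actual mechanism is the antisymmetric gluing: $S_{\alpha,\tau}=\Sigma_{\tau,\alpha}\cup s_{-1,-1}(\Sigma_{-\tau,\alpha})$, so the left half is built from $-\tau$ and the odd-in-$\lambda$ terms of the two halves cancel each other. Your "doubling for the two symmetric halves" is therefore only correct at second order; at first order the halves contribute with opposite signs, and if you had carried out the computation believing in a boundary-term cancellation per half, the step would fail.

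Relatedly, the place where compact support of $\tau$ and an integration by parts genuinely enter is at \emph{second} order: expanding $\beta_\lambda=\lambda\tau+\tfrac{\lambda^2}{2}\tau'\tau+O(\lambda^3)$ produces a cross term proportional to $\alpha(1+\alpha^2)^{-1/2}\,\tau\,\tau'\,\lambda^2$, and it is this term that, after integrating by parts in $z$ (boundary terms vanish by the support condition), turns into $\tfrac{\alpha'\tau^2}{2(1+\alpha^2)^{3/2}}$ and combines with the $\tau^2(1+\alpha^2)^{-3/2}$ term to give the weight $(1+\alpha')$ in $\II(\tau)$; your sketch leaves this bookkeeping implicit. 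Two smaller points: the area of a ruled half is not just $\sqrt{1+\delta^2}$ times the swept $z$-width — the Jacobian contributes an extra $-\delta'/6$ term, which cancels in the difference only because $\tau$ vanishes at $\eta(w_1),\eta(w_2)$ so $\delta_\lambda(w_i)$ is $\lambda$-independent; and the hypothesis $\eta^{-1}(\eta(w_i))=\{w_i\}$ is used not to prevent rulings from colliding but to identify $D\cap S_{\alpha,\lambda\tau}$ exactly with the image of $[0,1]\times[w_1,w_2]$ under the two parametrizations, so that the areas being compared correspond (injectivity of each half only needs $\Lip(\lambda\tau)<2$, as in Lemma~\ref{lem:ruled-existence}).
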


In Lemma~\ref{lem:min-implies-bound}, we conclude that if $S_\alpha$ is an area-minimizing graphical strip over $K$, then $\frac{\alpha(w_2)-\alpha(w_1)}{w_2-w_1} \ge -1$ for all $w_1<w_2$. 

Second, in Section~\ref{sec:minimality}, we show that monotone sets are perimeter-minimizing and conclude that $S_\alpha^+$ is perimeter-minimizing when $\frac{\alpha(w_2)-\alpha(w_1)}{w_2-w_1} \ge -1$ for all $w_1<w_2$. This is a consequence of the kinematic formula, which expresses the perimeter of a subset $E\subset \HH$ in terms of an integral over the set of horizontal lines that intersect $\partial E$. When $E$ is monotone, then almost every horizontal line intersects $\partial E$ in at most one point. In this case, deformations of $E$ increase the number of intersections and thus increase perimeter. Proposition~\ref{prop:strips} follows immediately from the results of Section~\ref{sec:deforming-ruled} and Section~\ref{sec:minimality}.

\subsection{Graphical strips}\label{sec:strip-prelims}
In this section, we prove Lemma~\ref{lem:strip-injectivity} and Lemma~\ref{lem:alternate}. 

\begin{proof}[Proof of Lemma~\ref{lem:strip-injectivity}]
  By construction, every point in $S_\alpha$ is contained in a ruling and every ruling intersects the $z$--axis. In order for $S_\alpha$ to be a graphical strip, we need it to be an intrinsic graph over $K$.

  Let $\Theta\from [-1,1]\times \R\to \HH$, $\Theta(x,w)=(x,x\alpha(w),\eta(w))$ parametrize $S_\alpha$ and let $p\from [-1,1]\times \R\to K$ be the map 
  \begin{multline*}
    p(x,w) =\Pi(\Theta(x,w))
    =\Pi \left(x, x \alpha(w), \eta(w)\right) \\
    =\left(x, 0, \eta(w) - \frac{\alpha(w)}{2} x^2\right) = \left(x, 0, \eta(w)(1-x^2) + w x^2\right).
  \end{multline*}
    
  Suppose that $\eta$ is surjective and $\frac{\alpha(s)-\alpha(t)}{s-t} \ge -2$ for all $s<t$.  We claim that $p|_{([-1,1]\setminus \{0\})\times \R}$ is a homeomorphism. Let $h_x(w):= z(p(x,w)) = \eta(w)(1-x^2) + w x^2$. Since $\eta$ is nondecreasing, $h_x$ is an increasing surjective function when $x\ne 0$. It follows that $p|_{([-1,1]\setminus \{0\}) \times \R}$ is a bijection. For any $a$ and $b$ such that $-1\le a<b<0$ or $0<a<b\le 1$ and any $c<d$,
  $$p((a,b)\times(c,d)) = \{(x,0,z)\mid x\in (a,b), z \in (h_x(c),h_x(d))\}$$
  is an open set, so $p|_{([-1,1]\setminus \{0\}) \times \R}$ is an open map. Therefore, $p$ restricts to a homeomorphism from $([-1,1]\setminus \{0\}) \times \R$ to $K\setminus \langle Z\rangle$.

  Let $\mathbf{q}=(q_1,q_2):=(p|_{([-1,1]\setminus \{0\}) \times \R})^{-1}\from K\setminus\langle Z\rangle \to ([-1,1]\setminus \{0\}) \times \R$.
  For $(x,z)\in K$, let
  $$f_\alpha(x,z) :=\begin{cases}
    x \alpha(q_2(x,z)) & x\ne 0\\
    0 & x = 0.
  \end{cases}.$$
  Then $f_\alpha(p(x,w)) = x \alpha(w)$, so
  $$\Psi_{f_\alpha}(p(x,w)) = \Pi(\Theta(x,w)) Y^{x\alpha(w)} = \Theta(x,w),$$
  and $\Gamma_{f_\alpha}=S_\alpha$.

  We claim that $f_\alpha$ is continuous. Since $\mathbf{q}$ is continuous, $f_\alpha$ is continuous on $K\setminus \langle Z \rangle$. Let $z\in \R$ and let $w_1<w_2$ be such that $\eta(w_1)< z<\eta(w_2)$ and let $C=\max_{u\in [w_1,w_2]} |\alpha(u)|$. Let
  $$D=p([-1,1]\times [w_1,w_2]) = \{(x,0,z)\mid x\in [-1,1], z \in [h_x(c),h_x(d)]\};$$
  this contains a neighborhood of $(0,0,z)$, and if $v=(x,0,z)\in D$, then $|f_\alpha(v)|\le C x$. It follows that $f_\alpha$ is continuous at $(0,0,z)$ for any $z\in \R$. 
  
  Conversely, if there are $s<t$ such that $\frac{\alpha(s)-\alpha(t)}{s-t} < - 2$, then $\eta(t)<\eta(s)$, and there is some $x>0$ such that $h_x(s)=h_x(t)$. Then $p(x,s)=p(x,t)$, but $y(\Theta(x,s))=x\alpha(s)\ne y(\Theta(x,t))$. Therefore, $S_\alpha$ is not an intrinsic graph. Likewise, if $\eta$ is not surjective, then $\Pi(S_\alpha)$ is a strict subset of $K$.
\end{proof}

\begin{proof}[Proof of Lemma~\ref{lem:alternate}]
  Suppose that $\frac{\alpha(w_2)-\alpha(w_1)}{w_2-w_1} \ge -1$ for all $s<t$. Let $\eta(w):= w+\frac{\alpha(w)}{2}$; this is invertible and surjective by our hypothesis on $\alpha$, and we define $\sigma(z)=\alpha(\eta^{-1}(z))$. When $z=\eta(w)$,
  $$[(-1, -\alpha(w), \eta(w)), (1, \alpha(w), \eta(w))] = [(-1, -\sigma(z), z), (1, \sigma(z), z)],$$
  so
  $$S_\alpha = \bigcup_{z\in \R} [(-1, -\sigma(z), z), (1, \sigma(z), z)].$$
  
  Let $z_1<z_2$. We claim that $-2 \le \frac{\sigma(z_2) - \sigma(z_1)}{z_2 - z_1} < 2$. Let $w_i=\eta^{-1}(z_i)$ and $a_i=\alpha(w_i)=\sigma(z_i)$. Then $z_i=\eta(w_i)=w_i+\frac{a_i}{2}$. Since $\eta$ is monotone, we have $w_1<w_2$, so
  $$w_2 - w_1 = z_2 - z_1 - \frac{a_2-a_1}{2} > 0,$$
  i.e., $a_2 - a_1 < 2(z_2 - z_1)$.
  Furthermore, 
  $$\frac{a_2 - a_1}{w_2-w_1} = \frac{\alpha(w_2)-\alpha(w_1)}{w_2-w_1} \ge -1,$$
  so
  $$a_2 - a_1 \ge w_1-w_2 = z_1 - z_2 + \frac{1}{2}(a_2-a_1),$$
  so $a_2 - a_1 \ge -2(z_2-z_1)$. 
  Therefore, $-2 \le \frac{\sigma(z_2) - \sigma(z_1)}{z_2 - z_1} < 2$.

  Conversely, suppose that 
  \begin{equation}\label{eq:salpha-as-sigma}
    S_\alpha = \bigcup_{z\in \R} [(-1, -\sigma(z), z), (1, \sigma(z), z)]
  \end{equation}
  for some $\sigma$ such that
  \begin{equation}\label{eq:sigma-cond}
    -2 \le \frac{\sigma(z_2) - \sigma(z_1)}{z_2 - z_1} < 2
  \end{equation}
  for all $z_1<z_2$. We claim that $\frac{\alpha(w_2)-\alpha(w_1)}{w_2-w_1} \ge -1$ for all $w_1<w_2$.

  We have
  $$\{(x,y,z) \in S_\alpha \mid x=1\} = \{(1,\sigma(z),z)\mid z\in \R\} = \{(1,\alpha(w),\eta(w))\mid w\in \R\},$$
  so $\sigma(\eta(w))=\alpha(w)$ for all $w\in \R$. Let $w_1<w_2$. Since $\eta$ is nondecreasing, we have $\eta(w_1)\le \eta(w_2)$. In fact $\eta(w_1)<\eta(w_2)$; otherwise, $S_\alpha$ would have two rulings with the same $z$--coordinate.

  By \eqref{eq:sigma-cond} with $z_1=\eta(w_1)$, $z_2=\eta(w_2)$, 
  \begin{align*}
    \sigma\left(\eta(w_2)\right) - \sigma\left(\eta(w_1)\right) & \ge -2\left(\eta(w_2) - \eta(w_1)\right)\\
    \alpha(w_2) - \alpha(w_1) & \ge -2 \left(w_2 + \frac{\alpha(w_2)}{2} - w_1 - \frac{\alpha(w_1)}{2}\right)\\
    2(\alpha(w_2) - \alpha(w_1))&\ge -2(w_2-w_1),
  \end{align*}
  so $\frac{\alpha(w_2)-\alpha(w_1)}{w_2-w_1} \ge -1$, as desired.
\end{proof}

\subsection{Deforming ruled surfaces}\label{sec:deforming-ruled}
Recall that for any function $\tau\from \R\to \R$, we define $\gamma_\tau:=\{(0,y,z)\mid y=\tau(z)\}$ to be the graph of $\tau$ in the $yz$--plane, so that $S_\alpha$ has boundary
$$\partial S_\alpha = X^{-1} \gamma_\alpha \cup X\gamma_{-\alpha}.$$
In this section, for any sufficiently small $\tau$, we construct a ruled graph $\Sigma_{\tau,\alpha}$ over $K^+=[0,1] \times \{0\} \times \R$ with boundary $\partial \Sigma_{\tau,\alpha}= \gamma_\tau \cup X \gamma_\alpha$. By gluing two such graphs together, we obtain a surface
$$S_{\alpha,\tau} := \Sigma_{\tau,\alpha} \cup s_{-1,-1}(\Sigma_{-\tau,\alpha})$$
which is a deformation of $S_\alpha$.

First, we construct $\Sigma_{\tau,\alpha}$.
\begin{lemma}\label{lem:ruled-existence}
  Let $\alpha,\tau\from \R\to \R$ be continuous functions. Let $\eta(w):=w+\frac{\alpha(w)}{2}$, and suppose that $\frac{\alpha(s)-\alpha(t)}{s-t} \ge -2$ for all $s<t$ and $\eta(\R)=\R$. Suppose that $\Lip(\tau)<2$.
  Then there is a unique ruled graph $\Sigma_{\tau,\alpha}$ with boundary $\partial \Sigma_{\tau,\alpha}=\gamma_\tau\cup X\gamma_\alpha$.
\end{lemma}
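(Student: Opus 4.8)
The plan is to build $\Sigma_{\tau,\alpha}$ ruling by ruling. A ruled graph with the prescribed boundary must be a union of horizontal segments, each joining a point of $\gamma_\tau$ to a point of $X\gamma_\alpha$; by the analysis in Section~\ref{sec:char}, each such segment projects under $\Pi$ to a parabola arc in $V_0$ joining a point of $\Pi(\gamma_\tau)\subset \langle Z\rangle$ to a point of $\Pi(X\gamma_\alpha)=\{(1,0,\eta(w))\mid w\in\R\}$. So the first step is the elementary geometric fact that, given a point $(0,\tau(a),a)$ on $\gamma_\tau$ and a point $X\cdot(0,\alpha(w),w)=(1,\alpha(w),\eta(w))$ on $X\gamma_\alpha$, there is a horizontal segment joining them precisely when these two points lie on a common horizontal line of slope strictly between $-2$ and $2$ — indeed a horizontal line of slope $m$ through $(0,y_0,z_0)$ reaches $x=1$ at $y$-coordinate $y_0+m$, and using \eqref{eq:point-slope} one computes that matching both the $y$- and $z$-coordinates forces $m$ and $a$ to satisfy an explicit relation. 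The key point is to solve this relation: for each $w$, I claim there is a \emph{unique} $a=a(w)\in\R$ such that $(0,\tau(a),a)$ and $(1,\alpha(w),\eta(w))$ lie on a horizontal line, and that the resulting slope lies in $(-2,2)$. Solving the coordinate equations, $a$ must satisfy $a + \tfrac12\tau(a)\cdot(\text{something}) = \eta(w) - (\text{terms in }\alpha(w),\tau(a))$; the hypothesis $\Lip(\tau)<2$ makes the relevant map $a\mapsto a+\tfrac12(\alpha(w)-\tau(a))$ (or the appropriate variant coming from matching $z$-coordinates along the parabola) a strictly increasing homeomorphism of $\R$, which gives existence and uniqueness of $a(w)$ by the intermediate value theorem, and continuity of $w\mapsto a(w)$.

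Having produced, for each $w\in\R$, a unique horizontal segment $R_w$ from a point of $\gamma_\tau$ to $X\cdot(0,\alpha(w),w)$, I set $\Sigma_{\tau,\alpha}=\bigcup_{w\in\R}R_w$ and must check this is a ruled graph over $K^+$ with the right boundary. Boundary is immediate from construction once one checks $a$ is surjective onto $\R$ (so that all of $\gamma_\tau$ is hit), which follows from the homeomorphism property above together with surjectivity of $\eta$. To see it is an intrinsic graph over $K^+$, I parametrize as in the proof of Lemma~\ref{lem:strip-injectivity}: write $\Sigma_{\tau,\alpha}$ via a map $\Theta(x,w)$, $x\in[0,1]$, interpolating linearly along $R_w$ between its endpoint on $\langle Z\rangle$ (at $x=0$) and $X\cdot(0,\alpha(w),w)$ (at $x=1$); then $p(x,w):=\Pi(\Theta(x,w))$ has $z$-coordinate of the form $h_x(w)$ which, for each fixed $x\in[0,1]$, is a strictly increasing surjection of $\R$ — here the slope condition $m\in(-2,2)$ is exactly what guarantees monotonicity in $w$ and hence injectivity, by the same parabola-ordering argument as Lemma~\ref{lem:ordering-rulings}. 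The same open-map argument used for Lemma~\ref{lem:strip-injectivity} then shows $p$ is a homeomorphism off $\langle Z\rangle$, and continuity of the resulting graph function up to $\langle Z\rangle$ follows from the bound $|y(\Theta(x,w))|\le Cx$ on compact ranges of $w$, exactly as in that proof.

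Uniqueness is the last step: if $\Sigma'$ is any ruled graph with $\partial\Sigma'=\gamma_\tau\cup X\gamma_\alpha$, then every ruling of $\Sigma'$ is a horizontal segment with one endpoint on each boundary curve (a ruling cannot have both endpoints on the same component, since $\gamma_\tau$ and $X\gamma_\alpha$ are graphs over the $z$-axis and hence contain no horizontal segment, and an endpoint in the interior is excluded by the definition of ruled surface). Each point of $X\gamma_\alpha$ lies on a ruling, and by the uniqueness of $a(w)$ established above that ruling must be exactly $R_w$; so $\Sigma'\supseteq\Sigma_{\tau,\alpha}$, and since both are graphs over $K^+$ this forces equality. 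The main obstacle is the first step — correctly setting up and solving the coordinate-matching equation for the slope and base point of each ruling, and verifying that $\Lip(\tau)<2$ is exactly the hypothesis that makes the relevant map a homeomorphism; once that computation is pinned down, the rest runs parallel to the already-established Lemma~\ref{lem:strip-injectivity}.
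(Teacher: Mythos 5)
Your construction of the rulings is essentially the paper's, in different notation: matching coordinates along a horizontal line from $(1,\alpha(w),\eta(w))$ back to $x=0$ gives the equation $a-\tfrac12\tau(a)=\eta(w)$, and $\Lip(\tau)<2$ makes $\zeta(a)=a-\tfrac12\tau(a)$ a strictly increasing surjective homeomorphism of $\R$, so for each $w$ there is a unique endpoint $a(w)=\zeta^{-1}(\eta(w))$ on $\gamma_\tau$ and a unique segment $R_w$, of slope $\delta(w)=\alpha(w)-\tau(a(w))$. That step, and your uniqueness argument for the surface (every ruling of a competitor joins the two boundary curves and must coincide with some $R_w$), are sound; the latter is in fact spelled out more explicitly than in the paper.

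The gap is in the step where you show $\Sigma_{\tau,\alpha}=\bigcup_w R_w$ is an intrinsic graph. First, your claim that the connecting segments have slope strictly between $-2$ and $2$ is false: the slope of $R_w$ is $\alpha(w)-\tau(a(w))$, which is unbounded in general (take $\alpha\equiv 10$ and $\tau$ small); the hypothesis $\Lip(\tau)<2$ controls how the endpoints on $\gamma_\tau$ move, not the slopes of the rulings, and no slope bound is available. Second, and more seriously, you cannot obtain monotonicity of $w\mapsto z(\Pi(\Theta(x,w)))$ ``by the same parabola-ordering argument as Lemma~\ref{lem:ordering-rulings}'': that lemma concerns rulings of a surface already known to be an intrinsic graph $\Gamma_f$ (its proof uses that two characteristic curves of $\Gamma_f$ through a common point are tangent there), so invoking it here is circular --- graph-ness of $\Sigma_{\tau,\alpha}$ is exactly what is to be proved. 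Nor does ordering of the endpoints at $x=1$ (by $w$) and at $x=0$ (by $a(w)$, nondecreasing) prevent two projected parabolas from crossing at an interior $x$: a quadratic can dominate another at both endpoints and still dip below it in between if the curvatures, i.e.\ the slopes $\delta(w)$, differ enough. The missing content is precisely the quantitative argument in the paper's proof: assuming $g_{w_1}(x_0)=g_{w_2}(x_0)$ for some $x_0\in(0,1]$ and $w_1<w_2$, analyze the quadratic $p=g_{w_2}-g_{w_1}$ with $p(0)\ge 0$, $p(1)>0$, $p(x_0)=0$, and use $g_w'(0)=-\tau(a(w))$ to show such a touching forces $|\tau(a(w_2))-\tau(a(w_1))|>2|a(w_2)-a(w_1)|$, contradicting $\Lip(\tau)<2$. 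Without an argument of this type the injectivity of $\Pi$ on $\Sigma_{\tau,\alpha}$ is unproved. A minor further point: at $x=0$ the parametrization need not be injective when $\eta$ has intervals of constancy (several rulings may share an endpoint on $\gamma_\tau$), so that case must be handled separately via injectivity of $\Pi$ on $\gamma_\tau$, not by the $|y|\le Cx$ continuity bound you import from Lemma~\ref{lem:strip-injectivity}, which pertains to a surface passing through the $z$--axis rather than one bounded by $\gamma_\tau$.
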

\begin{proof}
  We first construct a tilted coordinate system on the $yz$--plane. Let $P\subset \HH$ be the $yz$--plane and let $Q = Y + \frac{Z}{2}$. Then $P=\langle Q, Z\rangle$. Since $\Lip(\tau)<2$, we can write $\gamma_\tau$ as a $P$--graph. That is, any point $Z^z Y^{\tau(z)}\in \gamma_\tau$ can be written
  \begin{equation}\label{eq:z-tau-zeta-tau}
    Z^z Y^{\tau(z)} = Z^{z - \frac{\tau(z)}{2}} P^{\tau(z)} = Z^{\zeta(z)} P^{\tau(z)}
  \end{equation}
  where $\zeta(z) :=z - \frac{\tau(z)}{2}$. Then $\zeta$ is invertible, so, letting $w=\zeta(z)$,
  $$\gamma_\tau = \{Z^{w} P^{\tau(\zeta^{-1}(w))} \mid w\in \R\}.$$
  Let $\beta(w) := \tau(\zeta^{-1}(w))$.

  Let $w\in\R$ and let $p=X Z^w Y^{\alpha(w)} \in X \gamma_\alpha$. For any $m \in \R$,
  \begin{multline}\label{eq:horiz-rule-sigma}
    p \cdot (X+ m Y)^{-1} = (1, \alpha(w),\eta(w)) (-1, -m, 0) \\ = \left(0, \alpha(w) - m, \eta(w) + \frac{\alpha(w) - m}{2}\right) = Z^{\eta(w)} P^{\alpha(w) - m},
  \end{multline}
  which lies in $\gamma_\tau$ if and only if $m = \alpha(w) - \beta(\eta(w))$.
  In this case,
  $$Z^{\eta(w)} P^{\alpha(w) - m}= Z^{\eta(w)} P^{\beta(\eta(w))} \stackrel{\eqref{eq:z-tau-zeta-tau}}{=} Z^{\zeta^{-1}(\eta(w))} Y^{\tau(\zeta^{-1}(\eta(w)))}= Z^{h(w)} Y^{\tau(h(w))},$$
  where $h(w) := \zeta^{-1}(\eta(w))$. Let
  \begin{equation}\label{eq:rulings-of-tau-alpha}
    R_w:= \left[X Z^w Y^{\alpha(w)}, Z^{\eta(w)} P^{\beta(\eta(w))}\right]
    = \left[X Z^w Y^{\alpha(w)}, Z^{h(w)} Y^{\tau(h(w))}\right]
  \end{equation}
  be the segment connecting $p$ to $\gamma_\tau$. Since $\eta$ is surjective, every point of $\gamma_\tau$ is the endpoint of some such segment.
  We call the union of these segments
  $$\Sigma_{\tau,\alpha} := \bigcup_w R_w.$$
  This is a surface ruled by the $R_w$ and bounded by $\gamma_\tau$ and $X \gamma_\alpha$.

  Let $\delta(w)=\alpha(w)-\beta(\eta(w))$, let
  $$\lambda_w(x) := Z^{h(w)} Y^{\tau(h(w))} (X + \delta(w) Y)^{x} = X Z^w Y^{\alpha(w)} (X + \delta(w) Y)^{x-1}$$
  parametrize $R_w$, and let $\Theta(x,w) := \lambda_w(x)$ parametrize $\Sigma_{\tau,\alpha}$. We claim that $\Sigma_{\tau,\alpha}$ is an intrinsic graph.

  For each $w$, there is some quadratic $g_w$ such that  $\Pi(\lambda_w(x))=(x,0,g_w(x))$. We have
  $$g_w(0)=z(\Pi(\lambda_w(0))) = z(\Pi(Z^{h(w)} Y^{\tau(h(w))})) = h(w),$$
  $$g_w(1) = z(\Pi(\lambda_w(1))) = z(\Pi(X Z^w Y^{\alpha(w)})) = w,$$
  and $g''_w(x)= -\slope R_w = -\delta(w)$, so
  $$g_w(x) = (1 - x) h(w) + x w + \frac{\delta(w)}{2} x(1-x).$$
  Then $\Pi(\Theta(x,w))= (x, 0, g_w(x))$. 
  We claim that for every $x\in (0,1]$, the map $w\mapsto g_w(x)$ is monotone increasing.

  Suppose that there are $x_0\in (0,1]$ and $w_1<w_2$ such that $g_{w_1}(x_0)=g_{w_2}(x_0)$. Let $p(x)=g_{w_2}(x)-g_{w_1}(x)$. Then 
  $p(0)=h(w_2)-h(w_1)\ge 0$, $p(1)=w_2-w_1> 0$, and $p''(x)=2c$ for all $x$, where $c=\frac{\delta(w_1)-\delta(w_2)}{2}$. Since $p$ is quadratic, $p(0)\ge 0$, $p(1) > 0$, and $p(x_0)= 0$, it has a minimum at some point $r \in (0,1)$ and $p(r)\le 0$. Therefore, $p(x)=p(r) + c (x-r)^2$ and $c>0$.

  This implies
  $$h(w_2) - h(w_1) =  p(1) = p(r) + c (1-r)^2.$$
  Since $h$ is nondecreasing, $p(r)<0$, and $r\in (0,1)$,
  $$0\le h(w_2) - h(w_1) \le  c (1-r)^2.$$

  By equation \eqref{eq:point-slope}, since $\lambda_w$ is a line of slope $\delta(w)$ through $(0,\tau(h(w)),h(w))$, we can also write
  $$g_w(x) = h(w) - \tau(h(w)) x - \frac{\delta(w)}{2}x^2.$$
  Then $g_w'(0) = -\tau(h(w))$, and 
  $$|\tau(h(w_2)) - \tau(h(w_1))| = |p'(0)| = 2 c |1-r| > 2 c(1-r)^2 \ge 2 |h(w_2) - h(w_1)|,$$
  which contradicts the assumption that $\Lip(\tau)<2$.

  Thus, for every $x\in (0, 1]$, the map $w\mapsto g_w(x)$ is increasing, and the map $w\mapsto g_w(0)=h(w)$ is non-decreasing. 
  Suppose that $p,q\in \Sigma_{\tau,\alpha}$ satisfy $\Pi(p)=\Pi(q)$. There are $x_1,x_2\in [0,1]$ and $w_1,w_2\in \R$ such that $p=\Theta(x_1,w_1)$ and $q=\Theta(x_2,w_2)$; since $\Pi(p)=\Pi(q)$, we have $x_1=x_2$. By the above, if $x_1\in (0,1]$, then $p=q$. Otherwise, if $x_1=x_2=0$, then $p$ and $q$ lie on $\gamma_\tau$, but $\Pi$ is injective on $\gamma_\tau$, so $p=q$. Therefore, $\Pi$ is injective on $\Sigma_{\tau,\alpha}$, so $\Sigma_{\tau,\alpha}$ is an intrinsic graph.
\end{proof}

Next, we calculate the area of $\Sigma_{\tau,\alpha}$ using the area formula \eqref{eq:area-formula}.
\begin{lemma}\label{lem:ruled-area-formula}
  Suppose that $\alpha$ is Lipschitz and that $S_\alpha$ is a graphical strip over $K$. 
  Let $\eta(w) = w + \frac{\alpha(w)}{2}$, $\zeta(z) =z - \frac{\tau(z)}{2}$, $\beta(w) = \tau(\zeta^{-1}(w))$, and $\delta(w)=\alpha(w) - \beta(\eta(w))$ as above. By Lemma~\ref{lem:strip-injectivity}, $\alpha'(w)\ge -2$ for almost every $w$ and $\eta$ is nondecreasing.
  
  Let
  $$\Theta(x,w) := X Z^w Y^{\alpha(w)} (X + \delta(w) Y)^{x-1}$$
  parametrize $\Sigma_{\tau,\alpha}$.
  Let $a<b$. Then
  \begin{equation}\label{eq:ruled-area}
    \area \Theta([0,1]\times[a,b]) = \int_{a}^b \sqrt{1+\delta(w)^2}\left(1+\frac{\alpha'(w)}{2}\right) \ud w - \frac{1}{6} \int_{\delta(a)}^{\delta(b)} \sqrt{1+m^2}\ud m.
  \end{equation}
\end{lemma}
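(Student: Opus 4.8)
The plan is to realize $\Sigma_{\tau,\alpha}$ as an intrinsic graph $\Gamma_g$, to read off its intrinsic gradient from the rulings, and then to evaluate the area integral \eqref{eq:area-formula} by pulling it back to the parameter rectangle $[0,1]\times[a,b]$.

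First I would set up the function $g$. By Lemma~\ref{lem:ruled-existence}, $\Sigma_{\tau,\alpha}=\Gamma_g$ for a continuous $g$ on $W:=\Pi(\Sigma_{\tau,\alpha})$, parametrized by $p:=\Pi\circ\Theta$, $p(x,w)=(x,0,g_w(x))$, with $g_w$ the quadratic found in that proof; recall in particular that $g_w''(x)=-\delta(w)$. From the defining relations $\eta(w)=w+\frac{\alpha(w)}{2}$, $\zeta(z)=z-\frac{\tau(z)}{2}$, $\beta=\tau\circ\zeta^{-1}$, $\delta=\alpha-\beta\circ\eta$, $h=\zeta^{-1}\circ\eta$ one gets $\tau(h(w))=\alpha(w)-\delta(w)$ and hence the clean identity $h(w)=w+\alpha(w)-\frac{\delta(w)}{2}$. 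Since $\alpha$ is Lipschitz, $\tau\in C^\infty_c$ with $\Lip(\tau)<2$, the functions $\zeta^{-1},h,\beta,\delta$ are locally Lipschitz --- for $\delta$ one solves $\delta'=\frac{\alpha'-(1+\alpha')\tau'(h)}{1-\tau'(h)/2}$ pointwise a.e., the denominator staying bounded away from $0$ --- so $h'=1+\alpha'-\frac{\delta'}{2}$ holds a.e.

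Next I would identify $\nabla_g g$. On any compact range of $w$ the rulings $R_w$ have uniformly bounded slope $\delta(w)$, so $\Sigma_{\tau,\alpha}$ is locally an intrinsic Lipschitz graph and $\nabla_g g$ exists in $L_{\infty,\mathrm{loc}}$; by Section~\ref{sec:char} the characteristic curves of $\Gamma_g$ are precisely the projected rulings $x\mapsto p(x,w)$, and along a $\nabla_g$--integral curve parametrized by $x$ we have $(\nabla_g g)(p(x,w))=-g_w''(x)=\delta(w)$. Thus $\nabla_g g$ is constant on each ruling and equal to $\delta$ of the second parameter. Plugging into \eqref{eq:area-formula} and changing variables via the (jointly Lipschitz, a.e.\ bijective) map $(x,w)\mapsto(x,g_w(x))$ --- whose Jacobian $\partial_w g_w(x)=(1-x)h'(w)+x+\frac{\delta'(w)}{2}x(1-x)$ is $\ge 0$ because $w\mapsto g_w(x)$ is nondecreasing by Lemma~\ref{lem:ruled-existence} --- gives
$$\area\Theta([0,1]\times[a,b])=\int_a^b\sqrt{1+\delta(w)^2}\,\Big(\int_0^1\partial_w g_w(x)\ud x\Big)\ud w.$$
The inner integral is $\frac{h'(w)+1}{2}+\frac{\delta'(w)}{12}$, which after substituting $h'=1+\alpha'-\frac{\delta'}{2}$ collapses to $1+\frac{\alpha'(w)}{2}-\frac{\delta'(w)}{6}$; finally $\int_a^b\sqrt{1+\delta(w)^2}\,\delta'(w)\ud w=\int_{\delta(a)}^{\delta(b)}\sqrt{1+m^2}\ud m$ by the substitution $m=\delta(w)$ (legitimate since $\delta$ is absolutely continuous), and this is exactly \eqref{eq:ruled-area}.

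The hard part will not be the arithmetic --- that is a two-line computation once the setup is in place --- but the soft analytic bookkeeping forced by $\alpha$ being only Lipschitz: justifying that \eqref{eq:area-formula} genuinely applies to $\Gamma_g$ (i.e.\ that $\nabla_g g$ is a bona fide $L_\infty$ function represented by the pointwise value computed along rulings), and that $(x,w)\mapsto(x,g_w(x))$ is an admissible, a.e.\ injective, Lipschitz change of variables with the stated Jacobian defined a.e. If one would rather sidestep the appeal to local intrinsic Lipschitzness, the identity $\nabla_g g(p(x,w))=\delta(w)$ can instead be checked directly against the distributional definition by carrying out the same change of variables and integrating by parts in $x$.
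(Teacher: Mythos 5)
Your proposal is correct and follows essentially the same route as the paper: apply the area formula with $\nabla_g g=\delta(w)$ constant along each ruling, pull back by $(x,w)\mapsto(x,g_w(x))$ using the monotonicity of $w\mapsto g_w(x)$ from Lemma~\ref{lem:ruled-existence} to control the Jacobian, integrate in $x$, and substitute $m=\delta(w)$. The only differences are cosmetic (you anchor the quadratic $g_w$ at the $\gamma_\tau$ end and use $h'=1+\alpha'-\frac{\delta'}{2}$, whereas the paper writes $g_w$ from the $X\gamma_\alpha$ end) plus your more explicit, and welcome, bookkeeping of the Lipschitz regularity of $\delta$ and $h$.
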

\begin{proof}
  By the area formula,
  \begin{multline*}
    \area \Theta([0,1]\times[a,b]) = \int_a^b \int_{0}^1 \sqrt{1+\delta(w)^2}\cdot \big|J[\Pi\circ \Theta](x,w)\big|\ud x \ud w\\
    =\int_a^b \sqrt{1+\delta(w)^2} j(w)\ud w,
  \end{multline*}
  where $j(w)=\int_0^1 \big|J[\Pi\circ \Theta](x,w)\big|\ud x.$

  Each ruling $\Theta([0,1]\times \{w\})$ is a line with slope $\delta(w)$ going through $X Z^w Y^{\alpha(w)}$. Since $\Pi(X Z^w Y^{\alpha(w)})=(1,0,w)$, by \eqref{eq:point-slope},
  \begin{align*} 
    \Pi(\Theta(x,w))
     = \left(x, 0, w - \alpha(w) (x-1) - \frac{\delta(w)}{2}(x-1)^2\right)
  \end{align*}
  Since $\Pi\circ \Theta$ is injective, for any $x\in (0,1)$, $w\mapsto z(\Pi(\Theta(x,w)))$ must be increasing. Then
  $$J[\Pi\circ \Theta](x,w)
  = 1 + (1-x) \alpha'(w) - \frac{(1-x)^2}{2}\delta'(w) \ge 0,$$
  and
  $$j(w) = \int_0^1 1 + (1-x) \alpha'(w) - \frac{(1-x)^2}{2} \delta'(w) \ud x = 1+\frac{\alpha'(w)}{2}- \frac{\delta'(w)}{6}.$$

  Therefore,
  \begin{multline*}
    \area \Theta([0,1]\times[a,b]) = \int_a^b \sqrt{1+\delta(w)^2}\left(1 + \frac{\alpha'(w)}{2} - \frac{\delta'(w)}{6}\right) \ud w \\
    = \int_a^b \int_{0}^1 \sqrt{1+\delta(w)^2} \left(1 + \frac{\alpha'(w)}{2}\right) \ud w - \int_{\delta(a)}^{\delta(b)} \sqrt{1+m^2} \ud m,
  \end{multline*}
  where we substitute $m=\delta(w)$ in the last equality.
\end{proof}

For $\tau$ such that $\Lip(\tau)<2$, let
$$S_{\alpha,\tau} := \Sigma_{\tau,\alpha} \cup s_{-1,-1}(\Sigma_{-\tau,\alpha}).$$
This is a deformation of $S_\alpha$, and we will use Lemma~\ref{lem:ruled-area-formula} to calculate the second variation of its area.
\begin{proof}[Proof of Proposition~\ref{prop:vf-strips}]
  Let $\lambda$ be small enough that $\Lip(\lambda\tau)<2$, so that $\Sigma_{\lambda \tau, \alpha}$ exists. Let $\zeta_\lambda(z) = z - \frac{\lambda \tau(z)}{2}$, let
  $$\beta_\lambda(w) = \lambda \tau(\zeta^{-1}_\lambda(w)),$$
  and let $\delta_\lambda(w) = \alpha(w) - \beta_\lambda(\eta(w))$, so that
  $$\Theta_{\lambda \tau, \alpha}(x,w) = 
  X Z^w Y^{\alpha(w)} (X + \delta_\lambda(w) Y)^{x-1}$$
  parametrizes $\Sigma_{\lambda \tau, \alpha}$ and $s_{-1,-1}\circ \Theta_{-\lambda \tau, \alpha}$ parametrizes $s_{-1,-1}(\Sigma_{-\lambda \tau,\alpha})$.
  Let
  $$R_{w,\lambda}:=\Theta_{\lambda \tau, \alpha}([0,1]\times\{w\})=\left[Z^{\zeta_\lambda^{-1}(\eta(w))} Y^{\lambda \tau(\zeta_\lambda^{-1}(\eta(w)))} ,X Z^w Y^{\alpha(w)}\right].$$

  Let $w_1<w_2$ be such that $\eta^{-1}(\eta(w_i))=\{w_i\}$ and let $D=\{(x,y,z) \in \HH\mid \eta(w _1)\le z\le \eta(w _2) \}$. Then for $z\not \in (\eta(w_1),\eta(w_2))$, we have $\zeta_\lambda(z) = z$, so $\zeta_\lambda^{-1}(z) = z$ and
  $$\zeta_\lambda^{-1}((\eta(w_1),\eta(w_2)))\subset (\eta(w_1),\eta(w_2)).$$
  We claim that for any $|\lambda|<1$,
  \begin{equation}\label{eq:int-limits-Salpha}
    S_{\alpha,\lambda\tau}\cap D = \Theta_{\lambda \tau,\alpha}([0,1]\times [w_1,w_2]) \cup s_{-1,-1}(\Theta_{-\lambda \tau,\alpha}([0,1]\times [w_1,w_2])).
  \end{equation}

  It suffices to show that $R_{w,\lambda}\subset D$ when $w\in [w_1,w_2]$ and $R_{w,\lambda}\cap D=\emptyset$ otherwise.
  The endpoints of $R_{w,\lambda}$ have $z$--coordinates
  $$z\left(Z^{\zeta_\lambda^{-1}(\eta(w))} Y^{\lambda \tau(\zeta_\lambda^{-1}(\eta(w)))}\right) = \zeta_\lambda^{-1}(\eta(w))$$
  and $z(X Z^w Y^{\alpha(w)}) = \eta(w)$. When $w\in [w_1,w_2]$, we have $\zeta_\lambda^{-1}(\eta(w)) \in [\eta(w_1),\eta(w_2)]$ and $\eta(w) \in [\eta(w_1),\eta(w_2)]$, so $R_{w,\lambda}\subset D$. When $w\not\in [w_1,w_2]$, we have $\zeta_\lambda^{-1}(\eta(w))=\eta(w) \not \in [\eta(w_1),\eta(w_2)]$, so $R_{w,\lambda}$ is disjoint from $D$.

  On the other hand, when $w \in [w_1,w_2]$, we have $\eta(w) \in [\eta(w_1),\eta(w_2)]$ and $\zeta_\lambda^{-1}(\eta(w))\in [\eta(w_1),\eta(w_2)]$, so both endpoints of $\Theta_{\lambda \tau, \alpha}([0,1]\times\{w\})$ lie in $D$. It follows that $\Theta_{\lambda \tau, \alpha}(x,w)\in D$ if and only if $w\in [w_1,w_2]$, which shows \eqref{eq:int-limits-Salpha}.

  Therefore, by Lemma~\ref{lem:ruled-area-formula},
  \begin{multline*}
    \area (S_{\alpha,\lambda \tau}\cap D) 
    =\area \Theta_{\lambda \tau, \alpha}([0,1]\times [w_1,w_2]) + \area \Theta_{-\lambda \tau, \alpha}([0,1]\times [w_1,w_2]) \\ 
    =\int_{w_1}^{w_2}  \left(\sqrt{1+\delta_\lambda(w)^2}+\sqrt{1+\delta_{-\lambda}(w)^2}\right)\left(1+\frac{\alpha'(w)}{2}\right) \ud w - \frac{1}{3} \int_{\delta(w_1)}^{\delta(w_2)} \sqrt{1+m^2}\ud m.
  \end{multline*}
  
  It remains to differentiate $\delta_\lambda(w)$. Let $z_w(\lambda):=\zeta^{-1}_{\lambda}(w)$.
  Then
  $$\zeta_\lambda(z_w(\lambda)) = z_w(\lambda) - \frac{\lambda}{2} \tau(z_w(\lambda)) = w.$$
  We differentiate both sides with respect to $\lambda$ to find
  $$z_w'(\lambda) - \frac{1}{2} \tau(z_w(\lambda)) + \frac{\lambda}{2} \tau'(z_w(\lambda)) z_w'(\lambda) = 0.$$
  Setting $\lambda=0$ and noting that $z_w(0)=\zeta^{-1}_{0}(w)=w$, we find
  $z_w'(0)= \frac{1}{2} \tau(w)$. 
  Likewise, let $b_w(\lambda):=\beta_{\lambda}(w)=\lambda \tau(z_w(\lambda))$. Then
  $$b_w'(\lambda)= \tau(z_w(\lambda)) + \lambda \tau'(z_w(\lambda)) z_w'(\lambda),$$
  so $b_w'(0)=\tau(t)$ and $b_w''(0)=2 \tau'(z_w(0)) z_w'(0)= \tau'(w)\tau(w)$, i.e.,
  $$\beta_{\lambda}(w) = \lambda \tau(w) + \frac{\lambda^2}{2} \tau'(w)\tau(w) + O_\tau(\lambda^3)$$
  for sufficiently small $\lambda$, where $O_\tau(\lambda^3)$ denotes an error term of magnitude at most $C(\tau) \lambda^3$.

  Substituting this into the power series
  $$\sqrt{1+(b-a)^2}=\sqrt{1+a^2} - \frac{a}{\sqrt{1+a^2}} b + \frac{1}{2(1+a^2)^{\frac{3}{2}}} b^2 + O(b^3),$$
  and abbreviating $\alpha=\alpha(w)$, $z=\eta(w)$, $\tau=\tau(z)$, $\frac{\ud \tau}{\ud z} = \tau'(\eta(w))$, we find
  \begin{align*}
    &\sqrt{1+(\beta_{\lambda}(\eta(w))-\alpha(w))^2} + \sqrt{1+(\beta_{-\lambda }(\eta(w))-\alpha(w))^2} \\
    &\qquad = \sqrt{1+\alpha^2} - \frac{\alpha}{\sqrt{1+\alpha^2}} \left(\lambda \tau + \frac{\lambda^2}{2} \frac{\ud \tau}{\ud z} \tau \right) + \frac{(\lambda \tau)^2}{2 (1+\alpha^2)^{\frac{3}{2}}}\\
    &\qquad \quad + \sqrt{1+\alpha^2} - \frac{\alpha}{\sqrt{1+\alpha^2}} \left(-\lambda \tau + \frac{\lambda^2}{2} \frac{\ud \tau}{\ud z} \tau \right) + \frac{(\lambda \tau)^2}{2 (1+\alpha^2)^{\frac{3}{2}}} + O_\tau(\lambda^3) \\
    &\qquad= 2 \sqrt{1+\alpha^2} - \frac{\alpha}{\sqrt{1+\alpha^2}} \frac{\ud\tau}{\ud z}\tau \lambda^2 + \frac{\tau^2}{(1+\alpha^2)^{\frac{3}{2}}}\lambda^2 + O_\tau(\lambda^3).
  \end{align*}
  Substituting this into \eqref{eq:ruled-area}, using the fact that $\frac{\ud z}{\ud w} = \frac{\ud \eta}{\ud w} = 1+\frac{1}{2} \frac{\ud \alpha}{\ud w}$, and integrating by parts we find
  \begin{align*}
    \area & (S_{\alpha,\lambda \tau}\cap D) - \area(S_{\alpha,0}\cap D)\\
          & =
            \lambda^2 \int_{w_1}^{w_2} \left(-\frac{\alpha}{\sqrt{1+\alpha^2}} \tau\frac{\ud\tau}{\ud z} + \frac{\tau^2}{(1+\alpha^2)^{\frac{3}{2}}}\right) \left(1+\frac{1}{2} \frac{\ud \alpha}{\ud w}\right) \ud w + O_\tau(\lambda^3 |w_2-w_1|) \\
          & =
            \lambda^2 \int_{w_1}^{w_2} -\frac{\alpha}{\sqrt{1+\alpha^2}} \cdot \tau \frac{\ud\tau}{\ud z} \frac{\ud z}{\ud w} + \frac{\tau^2}{(1+\alpha^2)^{\frac{3}{2}}}\left(1+ \frac{1}{2} \frac{\ud \alpha}{\ud w}\right) \ud w + O_\tau(\lambda^3 |w_2-w_1|) \\
          & =\lambda^2\int_{w_1}^{w_2} \frac{1}{(1+\alpha^2)^{\frac{3}{2}}}\frac{\ud \alpha}{\ud w}\cdot  \frac{\tau^2}{2} + \frac{\tau^2}{(1+\alpha^2)^{\frac{3}{2}}} \left(1+ \frac{1}{2} \frac{\ud \alpha}{\ud w}\right) \ud w + O_\tau(\lambda^3 |w_2-w_1|)\\
          & =\lambda^2 \int_{w_1}^{w_2} \frac{\tau^2}{(1+\alpha^2)^{\frac{3}{2}}}\left(1+ \frac{\ud \alpha}{\ud w}\right) \ud w + O_\tau(\lambda^3 |w_2-w_1|).
  \end{align*}
  This proves the proposition.
\end{proof}

Though the proposition deals with the case that $\alpha$ is Lipschitz, we can use it to find a necessary condition for $S_\alpha$ to be perimeter-minimizing in the general case.

\begin{lemma}\label{lem:min-implies-bound}
  Suppose that $S_\alpha$ is a graphical strip over $K$ and $S_\alpha$ is area-minimizing in $U$. Then $\frac{\alpha(w_2)-\alpha(w_1)}{w_2-w_1} \ge -1$ for all $w_1<w_2$.
\end{lemma}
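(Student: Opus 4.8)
The plan is to prove the contrapositive: if $\frac{\alpha(w_2)-\alpha(w_1)}{w_2-w_1}<-1$ for some $w_1<w_2$, then $S_\alpha$ is not area-minimizing in $U$, and the competitor that witnesses this is one of the deformations $S_{\alpha,\lambda\tau}$. The first step is a reformulation. Since $S_\alpha$ is a graphical strip, $w\mapsto\alpha(w)+2w$ is nondecreasing (Lemma~\ref{lem:strip-injectivity}), so $\alpha$ has locally bounded variation; write its distributional derivative as $\ud\alpha=\alpha'_{\mathrm{ac}}\,\ud w+\ud\alpha_s$, where $\alpha'_{\mathrm{ac}}\ge-2$ a.e.\ and $\ud\alpha_s\ge0$. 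Then the required bound $\frac{\alpha(w_2)-\alpha(w_1)}{w_2-w_1}\ge-1$ for all $w_1<w_2$ is equivalent to $w\mapsto\alpha(w)+w$ being nondecreasing, i.e.\ to $\ud\alpha+\ud w\ge0$, and since $\ud\alpha_s\ge0$ holds automatically this is in turn equivalent to $\alpha'_{\mathrm{ac}}\ge-1$ a.e. So it suffices to show: if $S_\alpha$ is area-minimizing in $U$, then $\alpha'_{\mathrm{ac}}\ge-1$ a.e.

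Suppose not, so $\{\alpha'_{\mathrm{ac}}<-1\}$ has positive measure. Although Proposition~\ref{prop:vf-strips} is stated for Lipschitz $\alpha$, the surfaces $S_\alpha$, $S_{\alpha,\lambda\tau}$ and their areas on a slab $D=\{\eta(w_1)\le z\le\eta(w_2)\}$ are defined for any continuous $\alpha$ satisfying the graphical-strip hypotheses (Lemmas~\ref{lem:ruled-existence} and~\ref{lem:ruled-area-formula}). Redoing the expansion in the proof of Proposition~\ref{prop:vf-strips} with $\alpha'(w)\,\ud w$ replaced by the measure $\ud\alpha$ throughout — equivalently, applying Proposition~\ref{prop:vf-strips} to the mollifications $\alpha_n=\alpha*\phi_n$ (smooth graphical strips with $\ud\alpha_n\to\ud\alpha$ weakly-$*$ and $\alpha_n\to\alpha$ locally uniformly) and letting $n\to\infty$, using that the cubic error in \eqref{eq:strip-var} is controlled by $\tau$ and the slab and hence uniform in $n$ — one obtains, for every $\tau\in C^\infty_c$ with $\Lip(\tau)<2$ supported in a slab $(\eta(w_1),\eta(w_2))$ with $\eta^{-1}(\eta(w_i))=\{w_i\}$, the expansion
\begin{equation*}
  \area(S_{\alpha,\lambda\tau}\cap D)-\area(S_\alpha\cap D)=\lambda^2\II(\tau)+O_\tau(|\lambda|^3),\qquad
  \II(\tau)=\int\frac{\tau(\eta(w))^2}{(1+\alpha(w)^2)^{3/2}}\,\ud w+\int\frac{\tau(\eta(w))^2}{(1+\alpha(w)^2)^{3/2}}\,\ud\alpha(w),
\end{equation*}
the second integral being a Stieltjes integral (for Lipschitz $\alpha$ this is the $\II(\tau)$ of Proposition~\ref{prop:vf-strips}). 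Using $\ud\alpha=\alpha'_{\mathrm{ac}}\ud w+\ud\alpha_s$ with $\ud\alpha_s\ge0$,
\begin{equation*}
  \II(\tau)=\int\frac{\tau(\eta(w))^2}{(1+\alpha(w)^2)^{3/2}}\bigl(1+\alpha'_{\mathrm{ac}}(w)\bigr)\,\ud w+\int\frac{\tau(\eta(w))^2}{(1+\alpha(w)^2)^{3/2}}\,\ud\alpha_s(w).
\end{equation*}

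Now I would choose $\tau$ so that $\II(\tau)<0$. Pick a Lebesgue point $w^*$ of $\alpha'_{\mathrm{ac}}$ with $\alpha'_{\mathrm{ac}}(w^*)<-1$ at which $\ud\alpha_s$ has zero density with respect to $\ud w$ (almost every point of $\{\alpha'_{\mathrm{ac}}<-1\}$ qualifies), set $z^*=\eta(w^*)$, and let $\tau$ be a fixed bump of height $1$ supported in $(z^*-\rho,z^*+\rho)$ with $\tau(z^*)=1$ (so $\Lip(\lambda\tau)<2$ for $|\lambda|$ small). If $\eta$ is not constant on any interval containing $w^*$, then as $\rho\to0$ the first integral equals $\bigl(1+\alpha'_{\mathrm{ac}}(w^*)\bigr)\int\frac{\tau(\eta(w))^2}{(1+\alpha^2)^{3/2}}\ud w$ plus a term of smaller order, a negative quantity comparable to $\int\frac{\tau(\eta(w))^2}{(1+\alpha^2)^{3/2}}\ud w$, while the $\ud\alpha_s$-integral is of smaller order, so $\II(\tau)<0$ for $\rho$ small. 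If instead $\eta$ is constant on a maximal nondegenerate interval $[p,q]\ni w^*$ — a ``wedge'' of $S_\alpha$, where $\alpha'_{\mathrm{ac}}=-2$ and $\ud\alpha_s=0$ — one takes $w_1<p<q<w_2$ with $\eta^{-1}(\eta(w_i))=\{w_i\}$; then the portion of the first integral over $[p,q]$ equals $\tau(z^*)^2\int_{p}^{q}\frac{-1}{(1+\alpha(w)^2)^{3/2}}\ud w<0$, a fixed negative number, while the remaining contributions (where $1+\alpha'_{\mathrm{ac}}\ge0$ a.e.\ and $\ud\alpha_s\ge0$) tend to $0$ as $\rho\to0$, so again $\II(\tau)<0$ for $\rho$ small.

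With such a $\tau$ fixed, the expansion above gives $\area(S_{\alpha,\lambda\tau}\cap D)<\area(S_\alpha\cap D)$ for all small $|\lambda|\ne0$. Since $S_{\alpha,\lambda\tau}$ is an intrinsic graph that agrees with $S_\alpha$ outside the bounded slab $D$ and has the same boundary on $\{x=\pm1\}$, its epigraph $S_{\alpha,\lambda\tau}^+$ — after truncating near $\{x=\pm1\}$ and regluing to $S_\alpha^+$ there, which costs an arbitrarily small amount of perimeter because the two graphs coincide on $\{x=\pm1\}$ — is an admissible competitor for $S_\alpha^+$ in $U$ of strictly smaller perimeter, contradicting that $S_\alpha$ is area-minimizing in $U$. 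The main obstacle is exactly this extension of Proposition~\ref{prop:vf-strips} to non-Lipschitz $\alpha$: one must justify that $\area(S_{\alpha,\lambda\tau}\cap D)$ depends continuously on $\alpha$ under mollification (or carry out the bounded-variation change of variables in Lemma~\ref{lem:ruled-area-formula} directly), keeping the cubic error uniform, and one must treat the wedge case in which $\tau$ cannot be supported where $\eta$ is flat.
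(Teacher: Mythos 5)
Your overall strategy (a competitor $S_{\alpha,\lambda\tau}$ with $\II(\tau)<0$, split into the two cases according to whether $\eta$ collapses an interval) is the same as the paper's, and your construction of $\tau$ making $\II(\tau)<0$ mirrors the paper's injective/non-injective $\hat\eta$ dichotomy. But there is a genuine gap at exactly the point you flag as ``the main obstacle'': your argument needs the second-variation expansion \eqref{eq:strip-var} for a profile $\alpha$ that is merely continuous with $\alpha+2w$ nondecreasing, and you only assert that this follows by mollification (with a uniform cubic error and convergence of $\area(S_{\alpha_n,\lambda\tau}\cap D)$, $\area(S_{\alpha_n}\cap D)$ to the unmollified areas) or by a BV rerun of Lemma~\ref{lem:ruled-area-formula}. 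None of that is carried out, and it is not routine: the Jacobian computation and the change of variables $m=\delta(w)$ in Lemma~\ref{lem:ruled-area-formula} use $\alpha'$ and $\delta'$ pointwise, and area is in general only lower semicontinuous, so the claimed continuity under mollification is precisely what must be proved. The paper avoids this entirely with one device you did not consider: dilate by $s_{1+\epsilon,1+\epsilon}$, so that $\hat S=s_{1+\epsilon,1+\epsilon}(S_\alpha)$ is a graphical strip over the wider strip and is perimeter-minimizing in $\hat U=s_{1+\epsilon,1+\epsilon}(U)$; applying Lemma~\ref{lem:ordering-rulings} to the extended rulings at $x=0$ and $x=1+\epsilon$ shows the restricted profile $\hat\alpha_\epsilon$ is automatically Lipschitz, so Proposition~\ref{prop:vf-strips} applies verbatim, and the conclusion for $\alpha$ is recovered at the end by letting $\epsilon\to 0$, since the difference-quotient inequality is closed under the pointwise convergence $\hat\alpha_\epsilon\to\alpha$.

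The same dilation also disposes of the admissibility issue: the deformation $S_{\hat\alpha,\lambda\tau_i}$, glued to $\hat S\setminus U$, has epigraph differing from $\hat S^+$ only in a set compactly contained in $\hat U\cap B(\0,r)$, because the common boundary curves $X\gamma_{\hat\alpha}\cup X^{-1}\gamma_{-\hat\alpha}$ now sit strictly inside $\hat U$. Your alternative — truncating at $x=\pm(1-\delta)$ and regluing to $S_\alpha^+$, paying a perimeter cost along two vertical walls — can be made to work (the wall cost is the $\ud y\,\ud z$--measure of the gap region, which tends to $0$ with $\delta$ by uniform continuity since the two graphs agree at $x=\pm 1$ and the $z$--range is bounded), but as written this estimate is also only asserted. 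In short: the plan is sound and parallel to the paper in its variational core, but the two technical pillars it rests on (the BV extension of Proposition~\ref{prop:vf-strips} with uniform error, and the gluing estimate) are left unproven, and the paper's single scaling trick is the missing idea that renders both unnecessary.
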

\begin{proof}
  Note that $S_\alpha^+ \symdiff S_{\alpha,\lambda\tau}^+$ is not compactly contained in $U$, so $S_{\alpha,\lambda \tau}$ is not a valid competitor for $S_\alpha$ in $U$. We fix this by scaling $S_\alpha$ slightly. This also lets us replace $\alpha$ by a Lipschitz function.

  Let $S_\alpha$ be a graphical strip over $K$ and suppose that $S_\alpha$ is perimeter-minimizing in $U$. Let $\epsilon>0$ and let 
  $\hat{S}:=s_{1+\epsilon,1+\epsilon}(S_\alpha).$
  Then $\hat{S}$ is a graphical strip over $s_{1+\epsilon,1+\epsilon}(K)$, so $\hat{S} \cap U$ is a graphical strip over $S$. 
  Let $f\from K\to \R$ be the function such that $S_\alpha=\Gamma_f$. Then $\hat{S}=\Gamma_{\hat{f}}$, where $\hat{f}\from s_{1+\epsilon,1+\epsilon}(K)\to \R$ is the function
  $$\hat{f}_\epsilon(p)=(1+\epsilon) f\left(s^{-1}_{1+\epsilon,1+\epsilon}(p)\right).$$
  Let $\hat{\alpha}_\epsilon(w) = \hat{\alpha}(w):=\hat{f}_\epsilon(1,w)$. Then   $\hat{S} \cap U= S_{\hat{\alpha}}$, and $\hat{S}$ is perimeter-minimizing in $\hat{U}:=s_{1+\epsilon,1+\epsilon}(U)$.

  We claim that $\hat{\alpha}$ is Lipschitz.
  For $w\in \R$, let $p_w=X Z^w Y^{\hat{\alpha}(w)}$. Then $p_w\in \hat{S}$ and there is a ruling $\hat{R}_w$ through $p_w$ with slope $\hat{\alpha}(w)$. By \eqref{eq:point-slope},
  $$\Pi(\hat{R}_w)=\{(x,0,g_w(x))\mid x\in [-1-\epsilon, 1+\epsilon]\},$$
  where 
  $$g_w(x) = w - \hat{\alpha}(w) (x-1) - \frac{\hat{\alpha}(w)}{2} (x-1)^2.$$

  Let $s<t$, so that $g_s(1) = s < t = g_t(1)$. By Lemma~\ref{lem:ordering-rulings}, $g_s(x) \le g_t(x)$ for all $x\in [-1-\epsilon, 1+\epsilon]$. Then
  $$g_t(0)-g_s(0) = t + \frac{\hat{\alpha}(t)}{2} - s - \frac{\hat{\alpha}(s)}{2} \ge 0,$$
  so $\hat{\alpha}(t) - \hat{\alpha}(s) \ge -2 (t-s)$ and
  $$g_t(1+\epsilon) -g_s(1+\epsilon) = t - s - \left(\epsilon + \frac{\epsilon^2}{2}\right)(\hat{\alpha}(t)-\hat{\alpha}(s)) \ge 0,$$
  i.e,
  $$\hat{\alpha}(t) - \hat{\alpha}(s) \le \left(\epsilon + \frac{\epsilon^2}{2}\right)^{-1} (t-s).$$
  That is, $\hat{\alpha}$ is Lipschitz.
  
  We claim that $\hat{\alpha}'(w)\ge -1$ for almost every $w\in \R$. 
  Suppose by contradiction that $\{w\mid \hat{\alpha}'(w)<-1\}$ has positive measure.
  Let
  $$\II(\tau) = \int_{-\infty}^\infty \frac{\tau(w)^2}{(1+\hat{\alpha}(w)^2)^{\frac{3}{2}}}\left(1+ \hat{\alpha}'(w)\right) \ud w;$$
  we claim that there is a $\tau\in C^\infty_c$ such that $\II(\tau)<0$.
  
  Let $\hat{\eta}(w):=w + \frac{\hat{\alpha}(w)}{2}$.
  We consider two cases, depending on whether $\hat{\eta}$ is injective. 
  If $\hat{\eta}$ is not injective, then there is some $z_0\in \R$ such that $\hat{\eta}^{-1}(z_0)=[a,b]$ for some $a<b$. For any $w\in (a,b)$, we have $\hat{\eta}(w)=w+\frac{\hat{\alpha}(w)}{2} = z_0$, so $\hat{\alpha}(w)=2z_0-2w$ and thus $\hat{\alpha}'(w)=-2$. Therefore,
  $$\II(\one_{\{z_0\}}) = \int_{-\infty}^{\infty} \frac{\one_{\{z_0\}}(\hat{\eta}(w))} {(1+\hat{\alpha}(w)^2)^{\frac{3}{2}}} \left(1 + \hat{\alpha}'(w) \right) \ud w 
  = \int_{a}^{b} - \frac{1} {(1+\hat{\alpha}(w)^2)^{\frac{3}{2}}} \ud w<0.$$
  
  If $\hat{\eta}$ is injective, we let $w_0\in \R$ be a Lebesgue point of $\hat{\alpha}'$ such that $\hat{\alpha}'(w_0)<-1$ and let $I_r=(\hat{\eta}(w_0-r), \hat{\eta}(w_0+r))$. Since $\hat{\alpha}$ is continuous, $w_0$ is also a Lebesgue point of $(1+\hat{\alpha}'(w))(1+\hat{\alpha}(w)^2)^{-\frac{3}{2}}$, and
  $$\lim_{r\to 0} \frac{\II(\one_{I_r})}{2r}  = \lim_{r\to 0} \frac{1}{2r} \int_{w_0-r}^{w_0+r} \frac{1} {(1+\hat{\alpha}(w)^2)^{\frac{3}{2}}} \left(1+\hat{\alpha}'(w) \right) \ud w = \frac{1+\hat{\alpha}'(w_0)}{(1+\hat{\alpha}(w_0)^2)^{\frac{3}{2}}}<0.$$
  
  In either case, there is some bounded interval $I$ (possibly $I=\{z_0\}$) such that $\II(\one_I)<0$. Let $\tau_i \in C^\infty_c$ be a sequence of uniformly bounded functions with uniformly bounded supports such that $\tau_i\to \one_I$ pointwise. Then $\lim_{i\to \infty} \II(\tau_i) =\II(\one_I)$ by dominated convergence, so there is a $\tau_i$ such that $\II(\tau_i)<0$. Let $F_\lambda = (\hat{S} \setminus U) \cup S_{\hat{\alpha},\lambda \tau_i}.$
  Then $F_\lambda$ is a deformation of $F_0=\hat{S}$, and there is an $r>0$ such that 
  $F_\lambda^+ \symdiff \hat{S}^+\Subset \hat{U}\cap B(\0,r).$
  By Proposition~\ref{prop:vf-strips}, there is a $C>0$ such that 
  $$\area(B(\0,r) \cap F_\lambda) \le \area(B(\0,r) \cap \hat{S}) + \lambda^2 \II(\tau_i) + C \lambda^3,$$
  so $\area(B(\0,r) \cap F_\lambda) < \area(B(\0,r) \cap \hat{S})$ when $\lambda$ is sufficiently small. This contradicts the fact that $\hat{S}$ is area-minimizing on $\hat{U}$, so $\hat{\alpha}'(w)\ge -1$ for almost every $w\in \R$.

  Thus, if $S_\alpha$ is area-minimizing in $U$, then $\frac{\hat{\alpha}_\epsilon(w_2)-\hat{\alpha}_\epsilon(w_1)}{w_2-w_1} \ge -1$ for all $\epsilon>0$ and all $w_2>w_1$.
  The continuity of $f$ implies that 
  $$\lim_{\epsilon \to 0} \hat{\alpha}_\epsilon(w) = \lim_{\epsilon\to 0} (1+\epsilon) f(s^{-1}_{1+\epsilon,1+\epsilon}(1,0,w)) = f(1,0,w)=\alpha(w),$$
  so
  $$\frac{\alpha(w_2)-\alpha(w_1)}{w_2-w_1} = \lim_{\epsilon \to 0} \frac{\hat{\alpha}_\epsilon(w_2)-\hat{\alpha}_\epsilon(w_1)}{w_2-w_1} \ge -1.$$
  This proves the lemma.
\end{proof}

\subsection{Minimality of graphical strips}\label{sec:minimality}

In this section, we show that monotone subsets of $\HH$ are perimeter-minimizing and give a criterion for $S_\alpha^+$ to be monotone. We first recall some definitions and formulas. Let $\cL$ be the space of horizontal lines in $\HH$, and for $U\subset \HH$, let $\cL(U)$ denote the set of horizontal lines that intersect $U$. Let $\cL$ be the measure on $\cN$ that is invariant under isometries of $\HH$ (rotations around the $z$--axis, left-translations, and maps $s_{\pm1,\pm1}$). This measure is unique up to constants, and we normalize it so that $\cN(\cL(B(\0,r)))=r^3$ for every $r>0$.

The kinematic formula (see~\cite{Mon05} or equation (6.1) in~\cite{CKN})  relates perimeter on lines to perimeter in $\HH$ as follows. There is a constant $c>0$ such that for any set $E\subset \HH$ with locally finite perimeter and any open subset $W\subset \HH$, $\Per_{E\cap L}(W\cap L)<\infty$ for $\cN$--almost every line $L$ and 
\begin{equation}\label{eq:kinematic}
  \Per_E(W)=c\int_{\cL} \Per_{E\cap L}(W\cap L) \ud\cN(L).
\end{equation}
Here, $\Per_{E\cap L}$ is the perimeter of $E\cap L$ as a subset of $L$. If $\Per_{E\cap L}$ is finite, then there is a finite union of intervals $S\subset L$ such that $(E\symdiff S)\cap L$ has measure zero and $\Per_{E\cap L}$ is the counting measure on $\partial S$.

We say that a subset $W\subset \HH$ is convex if it is convex as a subset of $\R^3$. For any $g\in \HH$, the map $h\mapsto gh$ is affine, so convexity is preserved by left-translation. Let $E\subset \HH$ be a set with locally finite perimeter and let $W\subset \HH$ be a convex open set. 
We say that $E$ is \emph{monotone} on $W$ if for almost every $L\in \cL$, we have $\Per_{E\cap L}(W\cap L)\le 1$. That is, up to a set of measure zero, $E\cap W\cap L$ is one of $\emptyset$, $W\cap L$, or the intersection of $W\cap L$ with a ray. Monotonicity is preserved by left-translation.

\begin{prop}\label{prop:monotone-implies-min}
  Let $E\subset \HH$ be a set with locally finite perimeter and let $W\subset \HH$ be a convex open set such that $E$ is monotone on $W$. Then $E$ is perimeter-minimizing in $W$.
\end{prop}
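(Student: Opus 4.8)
The plan is to use the kinematic formula \eqref{eq:kinematic} to reduce the $3$--dimensional perimeter comparison to a line-by-line comparison on horizontal lines, where the monotonicity hypothesis makes the estimate essentially trivial. Let $F\subset\HH$ be a competitor with $E\symdiff F\Subset B(\0,r)\cap W$ for some $r>0$. I would apply \eqref{eq:kinematic} to both $E$ and $F$ on the open set $W':=B(\0,r)\cap W$ (which is convex, being an intersection of convex sets), obtaining
$$\Per_E(W')=c\int_{\cL}\Per_{E\cap L}(W'\cap L)\ud\cN(L),\qquad \Per_F(W')=c\int_{\cL}\Per_{F\cap L}(W'\cap L)\ud\cN(L),$$
so it suffices to show $\Per_{E\cap L}(W'\cap L)\le\Per_{F\cap L}(W'\cap L)$ for $\cN$--almost every $L\in\cL$.

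The heart of the argument is the one-dimensional comparison. Fix a horizontal line $L$ for which both $E\cap L$ and $F\cap L$ have finite perimeter in $W'\cap L$ (a full-measure set of $L$ by the kinematic formula) and for which $E$ is monotone, i.e.\ $\Per_{E\cap L}(W\cap L)\le 1$. Since $W\cap L$ is convex, it is a subinterval (possibly a ray or all of $L$, or empty) of $L$, and $W'\cap L=W\cap L\cap(B(\0,r)\cap L)$ is a subinterval of it. Monotonicity says that, up to measure zero, $E\cap W\cap L$ is $\emptyset$, $W\cap L$, or the intersection of $W\cap L$ with a ray; intersecting further with $W'\cap L$, the set $E\cap W'\cap L$ is, up to measure zero, a subinterval $J_E$ of $W'\cap L$ that touches at least one of the two endpoints of $W'\cap L$ (or is empty or everything). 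Consequently $\Per_{E\cap L}(W'\cap L)\le 1$: its topological boundary inside the open interval $W'\cap L$ consists of at most one point. Now $E\symdiff F$ is disjoint from $\partial W'$ near $L$—more precisely, since $E\symdiff F\Subset B(\0,r)\cap W$, on the line $L$ the sets $E\cap L$ and $F\cap L$ agree in a neighborhood (within $L$) of the two endpoints of $W'\cap L$. Hence $F\cap W'\cap L$ also agrees with $J_E$ near both endpoints of $W'\cap L$. If $\Per_{E\cap L}(W'\cap L)=0$ the desired inequality is immediate, so assume it is $1$, meaning $J_E$ is a half of $W'\cap L$: it equals the endpoints' neighbourhood structure "filled on one side, empty on the other." Then $F\cap W'\cap L$ starts filled near one endpoint and empty near the other, so by the intermediate-value property its boundary inside $W'\cap L$ is nonempty, giving $\Per_{F\cap L}(W'\cap L)\ge 1=\Per_{E\cap L}(W'\cap L)$. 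This is the inequality we wanted.

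Integrating the pointwise inequality over $\cL$ against $\cN$ and applying \eqref{eq:kinematic} in reverse yields $\Per_E(W')\le\Per_F(W')$, i.e.\ $\Per_E(B(\0,r)\cap W)\le\Per_F(B(\0,r)\cap W)$; since $r$ and $F$ were arbitrary, $E$ is perimeter-minimizing in $W$. I expect the main obstacle to be the careful bookkeeping in the one-dimensional step: making precise the "agree near the endpoints" claim (which requires that $E\symdiff F$ being compactly contained in $B(\0,r)\cap W$ forces agreement on a full sub-neighborhood of $W'\cap L$ within $L$, uniformly enough to apply to a.e.\ $L$), and handling the degenerate cases where $W\cap L$ or $W'\cap L$ is empty, a single point, a ray, or all of $L$, and where $E\cap L$ differs from its representative interval by a null set. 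A secondary point to verify is that the finiteness clause in \eqref{eq:kinematic} applies simultaneously to $E$, to $F$, and to the monotonicity a.e.\ statement, so that all three hold for $\cN$--a.e.\ $L$ at once; this is just a countable intersection of full-measure sets.
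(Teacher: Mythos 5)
Your proposal is correct and follows essentially the same route as the paper: reduce via the kinematic formula \eqref{eq:kinematic} to a line-by-line comparison, use monotonicity to see that $E$ contributes at most one boundary point per horizontal line, and use $E\symdiff F\Subset W$ (so $E$ and $F$ agree near the ends of the interval cut out by the convex set) to force $F$ to contribute at least as many. The only cosmetic difference is that you localize to $B(\0,r)\cap W$ while the paper argues directly on $W$; the one-dimensional step and the final integration are the same.
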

\begin{proof}
  Let $F\subset \HH$ be a set with locally finite perimeter such that $E\symdiff F\Subset W$. We claim that
  \begin{equation}\label{eq:more-int}
    \Per_{E\cap L}(W\cap L)\le \Per_{F\cap L}(W\cap L)
  \end{equation}
  for almost every line $L\in \cL$.

  For almost every line $L\in \cL$, we have $\Per_{E\cap L}(W\cap L)\le 1$. If $\Per_{E\cap L}(W\cap L)=0$, then \eqref{eq:more-int} holds, so we suppose that $\Per_{E\cap L}(W\cap L)=1$. Then $W\cap \partial_{\cH^1}(E\cap L)$ consists of a single point, say $p$, and there is a ray $R$ based at $p$ such that $E\cap W\cap L=R\cap W\cap L$, up to a null set.

  Since $W$ is convex, the intersection $W\cap L$ is an interval, with (up to null sets) one end of $W\cap L$ in $E$ and the other end outside of $E$. Since $E\symdiff F\Subset W$, we likewise have one end of $W\cap L$ in $F$ and the other end outside of $F$. Therefore, $\Per_{F\cap L}(W\cap L)\ge 1=\Per_{E\cap L}(W\cap L)$.

  By \eqref{eq:kinematic},
  $$\Per_E(W)=c\int_{\cL} \Per_{E\cap L}(W\cap L) \ud\cN(L) \le \Per_F(W),$$
  so $E$ is perimeter-minimizing on $W$.
\end{proof}

Finally, we apply this to $S_\alpha$.
\begin{lemma}\label{lem:bound-implies-min}
  Let $S_\alpha$ be a graphical strip over $K$ and suppose that $\frac{\alpha(t)-\alpha(s)}{t-s} \ge -1$ for all $s<t$. Then $S_\alpha^+$ is perimeter-minimizing on $U=\{-1<x<1\}\subset \HH$.
\end{lemma}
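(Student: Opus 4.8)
The plan is to show that the epigraph $S_\alpha^+$ is monotone on $U$ in the sense of Section~\ref{sec:minimality} and then invoke Proposition~\ref{prop:monotone-implies-min}, using that $U=\{-1<x<1\}$ is convex and that $S_\alpha^+$ has locally finite perimeter (routine, e.g.\ because $f_\alpha$ is locally intrinsic Lipschitz by \cite{BiCaSC}, its intrinsic gradient being bounded on bounded sets). By Lemma~\ref{lem:alternate} the hypothesis on $\alpha$ is equivalent to the existence of $\sigma\colon\R\to\R$ with $-2\le\frac{\sigma(z_2)-\sigma(z_1)}{z_2-z_1}<2$ for all $z_1<z_2$ and $S_\alpha=\bigcup_z[(-1,-\sigma(z),z),(1,\sigma(z),z)]$; as in the proof of that lemma, $\eta(w)=w+\tfrac{\alpha(w)}{2}$ is then a strictly increasing homeomorphism of $\R$, and $S_\alpha=\Gamma_{f_\alpha}$ for a continuous $f_\alpha$. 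I would work throughout in this $\sigma$-parametrization: the ruling $T_z:=[(-1,-\sigma(z),z),(1,\sigma(z),z)]$ has slope $\sigma(z)$ and, by \eqref{eq:point-slope}, projects to the parabola $\Pi(T_z)=\{(x,0,g_z(x)):x\in[-1,1]\}$ with $g_z(x)=z-\tfrac{\sigma(z)}{2}x^2$. Using $\frac{\sigma(z_2)-\sigma(z_1)}{z_2-z_1}<2$ one sees that $z\mapsto g_z(x)$ is a strictly increasing bijection of $\R$ for each fixed $x\in[-1,1]$, so a unique ruling-projection passes over each point of $V_0$, and $f_\alpha(x,0,g_z(x))=x\sigma(z)$.

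Next I would fix a horizontal line $L$ and verify $\Per_{S_\alpha^+\cap L}(U\cap L)\le1$. Two cases are immediate: if $L=p\langle Y\rangle$ then $\Pi$ collapses $L$ to a point, so $L$ meets $S_\alpha$ in at most one point and $S_\alpha^+\cap L$ is at most a ray; if $L$ is a ruling $T_z$ then $S_\alpha^+\cap L=\emptyset$. Otherwise $L$ has a finite slope $m$; parametrizing $L$ by its $x$-coordinate, write $\ell(x)=A+mx$ for its $y$-coordinate and $g_C(x)=B-Ax-\tfrac m2 x^2$ for the $z$-coordinate of $\Pi(L)$ (so $g_C'=-\ell$), with $A,B\in\R$. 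A point of $L$ at parameter $x\in(-1,1)$ lies in $S_\alpha^+$ iff $\phi(x)>0$, where
$$\phi(x):=\ell(x)-f_\alpha(x,0,g_C(x))=A+x\bigl(m-\sigma(\zeta(x))\bigr),$$
with $\zeta(x)$ defined by $g_C(x)=\zeta(x)-\tfrac{\sigma(\zeta(x))}{2}x^2$. Since $\phi$ is continuous on $(-1,1)$, it is enough to prove that $\phi$ has at most one zero there; then $\{\phi>0\}\cap(-1,1)$ is, up to a null set, empty, all of $(-1,1)$, or an interval with an endpoint at $\pm1$, which gives $\Per_{S_\alpha^+\cap L}(U\cap L)\le1$.

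The heart of the argument is that for $x\ne0$, $\phi(x)=0$ is equivalent to $\Phi(x)=0$, where $\Phi(x):=\sigma\!\bigl(B-\tfrac{Ax}{2}\bigr)-m-\tfrac Ax$: if $\phi(x)=0$ then $\sigma(\zeta(x))=m+\tfrac Ax$, and feeding this into the relation defining $\zeta(x)$ collapses it to $\zeta(x)=B-\tfrac{Ax}2$ (and conversely). Since $\phi(0)=A$, one may assume $A\ne0$, and by symmetry (the case $A<0$ is handled identically, replacing $A$ by $|A|$ and ``increasing'' by ``decreasing'' below) take $A>0$. Now the two inequalities $-2\le\frac{\sigma(z_2)-\sigma(z_1)}{z_2-z_1}<2$ say that $x\mapsto\sigma(B-\tfrac{Ax}2)$ has difference quotients in $(-A,A]$, whereas on each of $(-1,0)$ and $(0,1)$ the map $x\mapsto-\tfrac Ax$ is increasing with difference quotients $\tfrac{A}{x_1x_2}>A$ (since $x_1x_2\in(0,1)$ there); adding, $\Phi$ is strictly increasing on $(0,1)$ and on $(-1,0)$, hence has at most one zero in each. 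To exclude a zero $x_0\in(0,1)$ together with a zero $x_1\in(-1,0)$, subtract the two identities to get $\sigma(B-\tfrac{Ax_0}2)-\sigma(B-\tfrac{Ax_1}2)=\tfrac A{x_0}-\tfrac A{x_1}$; the left side is $\le A(x_0-x_1)$ in absolute value by the slope bound on $\sigma$, while the right side equals $A(x_0-x_1)/|x_0x_1|>A(x_0-x_1)$, a contradiction. Hence $\phi$ has at most one zero in $(-1,1)$, $S_\alpha^+$ is monotone on $U$, and Proposition~\ref{prop:monotone-implies-min} completes the proof.

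I expect the last step to be the only real obstacle, and it is the only place the sharp bound $\frac{\alpha(t)-\alpha(s)}{t-s}\ge-1$ is used rather than the weaker $\ge-2$ that merely makes $S_\alpha$ a graph: monotonicity of $S_\alpha^+$ on each half-slab $\{0<x<1\}$, $\{-1<x<0\}$ is a soft consequence of the slope bound on $\sigma$, but ruling out one intersection of $L$ with $S_\alpha$ in each half — i.e.\ preventing $L$ from entering and leaving $S_\alpha^+$ inside the slab — requires the $1/x$-shape of the term $A/x$ in $\Phi$, which forces the two would-be intersection parameters of $L$ to have product bounded away from the interval $(-1,1)$. Getting the bookkeeping of signs and difference quotients right across $x=0$ is where I would be most careful.
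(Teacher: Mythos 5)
Your proposal is correct and takes essentially the same route as the paper: pass to the $\sigma$-parametrization via Lemma~\ref{lem:alternate}, show that $S_\alpha^+$ is monotone on $U$ by proving that a horizontal line not containing a ruling meets the surface at most once in the open slab, and conclude with Proposition~\ref{prop:monotone-implies-min}. Your zero-counting for $\Phi(x)=\sigma\bigl(B-\tfrac{Ax}{2}\bigr)-m-\tfrac{A}{x}$ is a line-by-line repackaging of the paper's computation, since subtracting your two identities gives exactly the paper's relation $x_0x_1\,\frac{\sigma(z_2)-\sigma(z_1)}{z_2-z_1}=2$, which is forbidden by $|x_0x_1|<1$ and $\bigl|\frac{\sigma(z_2)-\sigma(z_1)}{z_2-z_1}\bigr|\le 2$.
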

\begin{proof}
  We claim that $S_\alpha^+$ is monotone on $U$. By Lemma~\ref{lem:alternate}, 
  $$S_\alpha = \bigcup_{z\in \R} [(-1, -\sigma(z), z), (1, \sigma(z), z)]$$
  for some Lipschitz function $\sigma\from \R\to \R$ such that $-2 \le \sigma'(z) < 2$ for all $z$.

  For $x\in [-1,1]$, $z\in \R$, let $\Phi(x,z) = (x, x \sigma(z), z)$ parametrize $S_\alpha$. It suffices to show that any horizontal line that does not contain a ruling of $S_\alpha$ intersects $\Phi((-1,1)\times \R)$ at most once.

  Let $x_1,x_2\in (-1,1)$ and $z_1<z_2$. The horizontal plane $P_1$ centered at $\Phi(x_1,z_1)$ is the set
  \begin{align*}
    P_1 
    &= \left\{(x_1,x_1\sigma(z_1),z_1)\cdot (x-x_1,y-x_1\sigma(z_1),0) \mid x, y\in \R\right\} \\ 
    &= \left\{\left(x,y, z_1 + \frac{x_1 y - x x_1 \sigma(z_1)}{2}\right) \mid x, y\in \R\right\},
  \end{align*}
  so $\Phi(x_2,z_2)\in P_1$ if and only if 
  $$z_2 = z_1 + \frac{x_1 x_2\sigma(z_2) - x_2 x_1 \sigma(z_1)}{2},$$
  i.e.,
  $$x_1 x_2 \frac{\sigma(z_2)-\sigma(z_1)}{z_2-z_1} = 2.$$
  Since $\frac{|\sigma(z_2)-\sigma(z_1)|}{z_2-z_1} \le 2$ and $|x_1x_2|<1$, this is impossible. Therefore, if $L$ intersects $\Phi((-1,1)\times \R)$ at two points, then those two points have the same $z$--coordinate and thus lie on the same ruling of $S_\alpha$. Therefore, $S_\alpha^+$ is monotone on $U$ and thus perimeter-minimizing on $U$.
\end{proof}

Lemma~\ref{lem:min-implies-bound} and Lemma~\ref{lem:bound-implies-min} prove the two directions of Proposition~\ref{prop:strips}.

\section{Scaling limits of ruled graphs}\label{sec:scaling-limits}

In the last section, we classified area-minimizing graphical strips. In this section and the next section, we will use the classification of area-minimizing graphical strips to classify entire area-minimizing ruled intrinsic graphs. We first show that the scaling limit of an entire ruled intrinsic graph is a graphical strip; in fact, it is a broken plane (Definition~\ref{def:broken-plane}). 
\begin{lemma}\label{lem:scaling-limits-are-BPs}
  Let $f\from V_0 \to \R$ be a continuous function such that \(\Gamma_f\) is a ruled surface. There are $u\in [0,\infty]$ and $\theta\in \R$ such that if $\rot_\theta\from \HH\to \HH$ is rotation by $\theta$ around the $z$--axis, then $s_{t,t}^{-1}(\Gamma^+_f)\to \rot_\theta(\BP_u)$ as $t\to \infty$.
  Furthermore, if $u=0$, then $\Gamma_f$ is a vertical plane.
\end{lemma}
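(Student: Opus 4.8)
The plan is to encode $\Gamma_f$ in its family of rulings, track the rulings under the scaling $s_{t,t}^{-1}$, and read off the limit. Since $\Gamma_f$ is an entire ruled intrinsic graph it is closed, embedded, and has empty boundary, so every point lies on a ruling that is a complete horizontal line; by \eqref{eq:point-slope} each ruling $R$ projects to a parabola $\Pi(R)=\{(x,0,\,a_R+\beta_Rx+c_Rx^2)\mid x\in\R\}$ whose slope is $m_R=-2c_R$, and by Lemma~\ref{lem:ordering-rulings} these parabolas are pairwise non-crossing, hence totally ordered, and they cover $V_0$. I would parametrize the rulings by the $z$-intercept $a=a_R$ of their parabola. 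Inspecting the sign of the difference of two such parabolas with intercepts $a_1<a_2$ (a quadratic in $x$ that is $\ge0$ everywhere and positive at $x=0$) yields three facts: the first-order coefficient $\beta$ depends only on $a$; the curvature $c=c(a)$ is nondecreasing, so the slope $m=m(a)$ is nonincreasing; and the discriminant is nonpositive, i.e. $(\beta(a_2)-\beta(a_1))^2\le 4\,(c(a_2)-c(a_1))(a_2-a_1)$. Put $c_\pm:=\lim_{a\to\pm\infty}c(a)\in[-\infty,\infty]$ and $m_\pm:=-2c_\pm$, so $m_-\ge m_+$; since no ruling of a graph is a $\Pi$-fiber, every $c(a)$ is finite, whence $m_+<+\infty$ and $m_->-\infty$.

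Next I would extract the growth of $\beta$ and pass to the limit. When $c_\pm$ are finite the discriminant bound with $a_2=2a$ gives $(\beta(2a)-\beta(a))^2\le 4\,(c(2a)-c(a))\,a$ with $c(2a)-c(a)\to0$, and a dyadic iteration applied to $a\mapsto\beta(a)/\sqrt{|a|}$ shows $\beta(a)/\sqrt{|a|}\to0$ as $|a|\to\infty$. Under $s_{t,t}^{-1}$ the parabola $z=a+\beta x+cx^2$ becomes $z=\tfrac{a}{t^2}+\tfrac{\beta}{t}x+cx^2$, the curvature being the ruling slope and hence scaling-invariant. Fixing $(x_0,0,z_0)\in V_0$, the parabola of $s_{t,t}^{-1}(\Gamma_f)$ through it is the $s_{t,t}^{-1}$-image of the parabola of $\Gamma_f$ through $(tx_0,0,t^2z_0)$; writing $a_t$ for the latter's intercept, the equation $z_0=\tfrac{a_t}{t^2}+\tfrac{\beta(a_t)}{t}x_0+c(a_t)x_0^2$ together with the monotonicity of $c$ and the bound $\beta(a)=o(\sqrt{|a|})$ forces $a_t/t^2\to0$, $\beta(a_t)/t\to0$, and $c(a_t)\to c_\infty(x_0,z_0)$, where $c_\infty=c_+$ if $z_0>-\tfrac{m_+}{2}x_0^2$, $c_\infty=c_-$ if $z_0<-\tfrac{m_-}{2}x_0^2$, and $c_\infty=z_0/x_0^2\in[c_-,c_+]$ in between. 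So the graph functions of $s_{t,t}^{-1}(\Gamma_f)$ converge a.e. on $V_0$ to the graph function of the surface $W$ consisting of the slope-$m_+$ vertical half-plane over $\{z>0\}$, the slope-$m_-$ vertical half-plane over $\{z<0\}$, and the wedge $\{(x,mx,0)\mid m\in[m_+,m_-]\}$ over $z=0$. Comparing half-planes and wedges identifies $W$ with $\rot_\theta(\BP_u)$ for $\theta=\tfrac12(\arctan m_++\arctan m_-)$ and $u=\tan\!\big(\tfrac12(\arctan m_--\arctan m_+)\big)\in[0,\infty]$, and hence $s_{t,t}^{-1}(\Gamma_f^+)\to\rot_\theta(\BP_u^+)$ in $L^1_{\mathrm{loc}}$; here $u\ge0$ because $m_-\ge m_+$, and $u=\infty$ exactly when $c_+=+\infty$ and $c_-=-\infty$.

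For the final assertion: if $u=0$ then $\arctan m_-=\arctan m_+$, so $m_-=m_+$, and this common value is finite by the first paragraph. A nonincreasing function with equal limits at $\pm\infty$ is constant, so $c(a)$ is constant; then the difference of any two of the (equal-curvature) parabolas is affine, and non-crossing forces it to be constant, so $\beta$ is constant as well. The parabolas are then vertical translates of one another, and the union of the corresponding horizontal rulings is a vertical plane, which is $\Gamma_f$.

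The technical heart is the convergence in the second paragraph: getting a genuine limit rather than a subsequential one — this is exactly where the sublinear bound on $\beta$ is used, to kill the linear term in the rescaled parabola — and recognizing the central wedge as the scaling limit of all the transition rulings (those of slope strictly between $m_-$ and $m_+$) rather than of any fan of rulings already present in $\Gamma_f$. The cases $c_+=+\infty$ or $c_-=-\infty$ need separate care: there the corresponding half-plane of $W$ degenerates to a piece of a plane foliated by vertical ($\langle Y\rangle$-direction) lines, the $z$-intercept no longer parametrizes those rulings, and the epigraph convergence has to be argued more directly; when both limits are infinite this is the case $u=\infty$, $W=\rot_\theta(\BP_\infty)$.
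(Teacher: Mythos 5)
Your overall route is the same as the paper's: encode the rulings by their projected parabolas, use the non-crossing property (Lemma~\ref{lem:ordering-rulings}) to get monotonicity of the slopes, define the two asymptotic slopes, prove pointwise a.e.\ convergence of the rescaled graph functions $f_t$ to the explicit piecewise limit, identify the limit epigraph with $\rot_\theta(\BP_u^+)$, and handle $u=0$ by showing all rulings are parallel and coplanar. The finite case of your argument is essentially sound, but there is a genuine gap: the cases $c_+=+\infty$ or $c_-=-\infty$ (equivalently $m_+=-\infty$ or $m_-=+\infty$) are not proved, only flagged as needing ``separate care.'' These cases cannot be waved off --- the lemma allows $u=\infty$, and the proof of Theorem~\ref{thm:ruled-bernstein} relies on the scaling limit being $\rot_\theta(\BP_u^+)$ in exactly this situation --- and your key estimate $\beta(a)=o(\sqrt{|a|})$ genuinely fails to be available there, since its derivation uses $c(2a)-c(a)\to 0$, i.e.\ boundedness of $c$. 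So the one tool your limit argument rests on disappears precisely in the deferred case. The fix is what the paper does in Lemma~\ref{lem:limit-f}, and it needs no growth estimate on $\beta$ at all: for a point with $c_-x_0^2<z_0<c_+x_0^2$ one sandwiches the ruling through $(tx_0,0,t^2z_0)$ between two \emph{fixed} rulings, which bounds its intercept and linear coefficient by constants independent of $t$ (so $\beta(a_t)/t\to 0$ trivially and $c(a_t)\to z_0/x_0^2$); when $c_+=+\infty$ the ``transition'' region is everything above the lower parabola, so this argument already covers the infinite case verbatim. Only when the outer slope is finite does one need more, and there the paper compares with a single ruling of nearly extremal slope and uses $h\ge 0$, $0\le h''\le\epsilon$ to get $|h'(tx_0)|\le\sqrt{2\epsilon\,h(tx_0)}$. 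If you want to keep your $\beta(a)=o(\sqrt{|a|})$ route for the finite outer regions, that is fine, but you must supply the missing middle-region/infinite-slope argument rather than defer it.

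Two smaller points should also be repaired. First, $c$ is not a single-valued function of the intercept $a$: through one point of the $z$--axis there can be a whole fan of rulings with an interval of slopes (Lemma~\ref{lem:ruled-char-pts}, Corollary~\ref{cor:entire-ruled}; $\BP_u$ itself is the basic example), so your monotonicity and discriminant inequalities should be stated for arbitrary choices of rulings with intercepts $a_1<a_2$; they do hold in that form, $\beta$ really is determined by $a$ (two parabolas meeting at $x=0$ have equal derivative there), and the limits $c_\pm$ are independent of the branch, so this is an imprecision rather than an error, but as written several statements are false for surfaces with fans. Second, your claim that $a_t/t^2\to 0$ is wrong in the outer regions: there $a_t/t^2\to z_0-c_\pm x_0^2\ne 0$. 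This is harmless, since only $\beta(a_t)/t$ and $c(a_t)$ enter $f_t(x_0,0,z_0)=-\bigl(\beta(a_t)/t+2c(a_t)x_0\bigr)$, but it should be corrected. The identification of $\theta$ and $u$ from $m_\pm$ and your $u=0$ endgame (constant curvature forces constant $\beta$ by non-crossing, hence a vertical plane) match the paper and are fine.
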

We write $E_i\to E$ if $E_i$ converges locally to $E$, that is, for every compact set $K\subset \HH$, we have $\cH^4(E_i\symdiff E)\to 0$. 

We prove Lemma~\ref{lem:scaling-limits-are-BPs} by analyzing the characteristic curves of $\Gamma_f$. As noted in Section~\ref{sec:char}, every ruling of $\Gamma_f$ projects to a parabola in $V_0$, and since $\Gamma_f$ is entire, these parabolas cover $V_0$.

For each ruling $R$, let $m(R)$ be the slope of $R$, let $w(R)=(0,w_y(R),w_z(R)) \in W_0$ be the point where $R$ intersects the $yz$--plane, and let $\lambda_R(t)=w(R)(X+m(R)Y)^t$ parametrize $R$. Let $g_R(t)=z(\Pi(\lambda_R(t)))$ so that $\Pi(R)$ is the graph $\{(x,0,g_R(x))\in V_0\mid x\in \R\}$; by \eqref{eq:point-slope},
\begin{equation}\label{eq:gR-form}
  g_R(x)=w_z(R) - w_y(R) x - \frac{m(R)}{2} x^2.
\end{equation}

We first prove some lemmas characterizing $\Gamma_f$.
\begin{lemma}\label{lem:monotone-slopes}
  Let $R_1$ and $R_2$ be distinct rulings of \(\Gamma_f\) and suppose that $w_z(R_1)< w_z(R_2)$. Then $g_{R_1}(x) \le g_{R_2}(x)$ for all $x\in \R$ and $m(R_1) \ge m(R_2)$.
\end{lemma}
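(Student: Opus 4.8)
The strategy is to extract this from Lemma~\ref{lem:ordering-rulings} together with the explicit parabola formula \eqref{eq:gR-form}, and then to use the fact that $\Gamma_f$ is an \emph{entire} graph to rule out the ``wrong'' ordering and to force the inequality on slopes. First I would observe that, since $R_1 \neq R_2$, Lemma~\ref{lem:ordering-rulings} applies on $I_1 \cap I_2 = \R$ (both rulings project onto all of $V_0$ because $\Gamma_f$ is entire, so $I_i = \R$): either $g_{R_1} \le g_{R_2}$ everywhere or $g_{R_1} \ge g_{R_2}$ everywhere. To decide which, evaluate at $x = 0$: by \eqref{eq:gR-form}, $g_{R_i}(0) = w_z(R_i)$, and the hypothesis $w_z(R_1) < w_z(R_2)$ gives $g_{R_1}(0) < g_{R_2}(0)$, which forces the first alternative, $g_{R_1}(x) \le g_{R_2}(x)$ for all $x \in \R$. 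This handles the first assertion.

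For the slope inequality, consider the difference $p(x) := g_{R_2}(x) - g_{R_1}(x)$, which by \eqref{eq:gR-form} is the quadratic
\begin{equation*}
  p(x) = \bigl(w_z(R_2) - w_z(R_1)\bigr) - \bigl(w_y(R_2) - w_y(R_1)\bigr) x - \frac{m(R_2) - m(R_1)}{2}\, x^2.
\end{equation*}
We have just shown $p(x) \ge 0$ for all $x \in \R$. A quadratic that is nonnegative on all of $\R$ must have nonpositive leading coefficient, i.e. $-\tfrac{1}{2}(m(R_2) - m(R_1)) \ge 0$, which is exactly $m(R_1) \ge m(R_2)$. (If the leading coefficient vanishes, $p$ is affine and nonnegative on $\R$, hence constant, so $m(R_1) = m(R_2)$ and the inequality still holds.) This completes the proof.

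The only subtlety — and the one point I would be careful to state explicitly rather than gloss over — is the claim that each ruling of an entire ruled intrinsic graph projects \emph{onto all of} $V_0$, so that $I_1 \cap I_2 = \R$ and the quadratic comparison is valid globally. This follows because each ruling $R$ is an entire horizontal line $w(R)\langle X + m(R)Y\rangle$, parametrized by $\lambda_R$ over all $t \in \R$, and \eqref{eq:point-slope}/\eqref{eq:gR-form} show $x(\Pi(\lambda_R(t)))$ ranges over all of $\R$; hence $\Pi(R)$ is a full parabola $\{(x,0,g_R(x)) : x \in \R\}$. I do not anticipate any real obstacle here: once the global nonnegativity $p \ge 0$ on $\R$ is in hand, the leading-coefficient argument is immediate. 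The whole lemma is essentially a packaging of Lemma~\ref{lem:ordering-rulings} specialized to the entire case, with the $x=0$ evaluation picking out the correct branch.
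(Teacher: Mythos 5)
Your proof is correct and follows essentially the same route as the paper: evaluate the projected parabolas at $x=0$ to get $g_{R_1}(0)=w_z(R_1)<w_z(R_2)=g_{R_2}(0)$, invoke Lemma~\ref{lem:ordering-rulings} to upgrade this to $g_{R_1}\le g_{R_2}$ on all of $\R$, and then read off $m(R_1)\ge m(R_2)$ from \eqref{eq:gR-form}. The only difference is that you spell out the leading-coefficient argument for the nonnegative quadratic $g_{R_2}-g_{R_1}$, which the paper leaves implicit; this is a harmless elaboration, not a different method.
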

\begin{proof}
  We have $g_{R_1}(0) = w_z(R_1) < w_z(R_2) = g_{R_2}(t)$, so by Lemma~\ref{lem:ordering-rulings}, $g_{R_1}(t)\le g_{R_2}(t)$ for all $t$. By \eqref{eq:gR-form}, this implies $m(R_1) \ge m(R_2)$.
\end{proof}

For $p\in \HH$, $m\in \R$, let $L_{p,m}$ be the horizontal line of slope $m$ through $p$.
\begin{lemma}\label{lem:ruled-char-pts}
  Let $R_1$ and $R_2$ be distinct rulings of $\Gamma_f$ that intersect at a point $p\in \HH$; suppose that $m(R_1) <  m(R_2)$. Then for every $m\in [m(R_1), m(R_2)]$, $L_{p,m}$ is a ruling of $\Gamma_f$.
\end{lemma}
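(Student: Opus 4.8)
The plan is to argue that the ruling $L_{p,m}$ exists as a limit of nearby rulings of $\Gamma_f$ and that, because $\Gamma_f$ is a graph, this limit lies inside $\Gamma_f$. First I would set up the parametrization of the two given rulings: write $p=(x_p,y_p,z_p)$, and for each ruling $R$ through $p$ note that $\Pi(R)$ is the parabola $x\mapsto \Pi(p)_z - y_p(x-x_p) - \tfrac{m(R)}{2}(x-x_p)^2$ by \eqref{eq:point-slope}, so $\Pi(R_1)$ and $\Pi(R_2)$ are two parabolas tangent at $\Pi(p)$ with opening coefficients $-m(R_1)$ and $-m(R_2)$. Any $m\in[m(R_1),m(R_2)]$ gives a parabola $\Pi(L_{p,m})$ with the same tangent at $\Pi(p)$ but intermediate opening coefficient, hence $\Pi(L_{p,m})$ lies between $\Pi(R_1)$ and $\Pi(R_2)$ in the sense of Lemma~\ref{lem:ordering-rulings}: $g_{R_1}(x)\le g_{L_{p,m}}(x)\le g_{R_2}(x)$ for $x\ge x_p$ and the reverse for $x\le x_p$ (with equality only at $x_p$).

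The next step is to recover actual points of $\Gamma_f$ on $L_{p,m}$. Since $\Gamma_f$ is an entire intrinsic graph, every point $q=(x,0,g_{L_{p,m}}(x))\in\Pi(L_{p,m})\subset V_0$ is $\Pi(\Psi_f(q))$ for a unique point $\Psi_f(q)\in\Gamma_f$, and $\Psi_f(q)$ lies on some ruling $R_q$ of $\Gamma_f$ (here I use that every point of the ruled surface $\Gamma_f$ is contained in a ruling). The parabola $\Pi(R_q)$ passes through $q$ and, by Lemma~\ref{lem:ordering-rulings} applied to the pairs $(R_q,R_1)$ and $(R_q,R_2)$, must be ``trapped'' between $\Pi(R_1)$ and $\Pi(R_2)$ near $x_p$; I would use this trapping to force $\Pi(R_q)$ to pass through $\Pi(p)$ as well — if it missed $\Pi(p)$ it would have to cross one of $\Pi(R_1)$, $\Pi(R_2)$, contradicting Lemma~\ref{lem:ordering-rulings} — so $R_q$ is a ruling of $\Gamma_f$ through $p$, and a parabola through $\Pi(p)$ and $q$ is uniquely determined, forcing $\Pi(R_q)=\Pi(L_{p,m})$. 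Since $\Pi$ is injective on $\Gamma_f$ and maps $R_q$ onto $\Pi(L_{p,m})=\Pi(L_{p,m})$, and $R_q$ and $L_{p,m}$ are both horizontal lines with the same projection, we get $R_q=L_{p,m}$. Thus $L_{p,m}$ is a ruling of $\Gamma_f$.

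The main obstacle is the trapping argument: I need to be careful that $\Pi(R_q)$, a parabola through $q$, genuinely cannot avoid $\Pi(p)$. The point is that between $x_p$ and $x(q)$ the parabola $\Pi(R_q)$ lies strictly between the two bounding parabolas (by Lemma~\ref{lem:ordering-rulings}, since $q$ does), and at $x_p$ all three of $\Pi(R_1),\Pi(R_2),\Pi(L_{p,m})$ coincide; a parabola pinned to lie between two parabolas that touch at $x_p$ and itself passing arbitrarily close to that pinch point must pass through it, because otherwise its value at $x_p$ differs from the common value, and then by continuity it exits the region bounded by $\Pi(R_1)$ and $\Pi(R_2)$ somewhere between $x_p$ and $x(q)$, forcing a crossing with one of them — contradicting Lemma~\ref{lem:ordering-rulings}. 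Making this precise requires handling the two cases $x(q)>x_p$ and $x(q)<x_p$ and the edge case $q=\Pi(p)$ itself (where there is nothing to prove), but it is elementary once the parabola geometry is written out.
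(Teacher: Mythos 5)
Your proposal is essentially the paper's own argument: pick a point $q\neq\Pi(p)$ on $\Pi(L_{p,m})$, lift it to $\Gamma_f$, take a ruling $R_q$ through the lift, sandwich $g_{R_q}$ between $g_{R_1}$ and $g_{R_2}$ via Lemma~\ref{lem:ordering-rulings}, and identify $R_q$ with $L_{p,m}$; the paper just translates $p$ to the origin first, where the three parabolas become $-\tfrac{\mu}{2}x^2$ and the identification is immediate. Two slips in your write-up are worth fixing, though neither breaks the argument. First, by \eqref{eq:point-slope} every horizontal line through $p$ projects to a parabola with the same value and the same derivative $-y_p$ at $x_p$, so the differences of $g_{R_1}$, $g_{L_{p,m}}$, $g_{R_2}$ are multiples of $(x-x_p)^2$; hence $g_{R_2}\le g_{L_{p,m}}\le g_{R_1}$ on all of $\R$, and the ordering does \emph{not} reverse across $x_p$ as you assert (such a reversal for the rulings $R_1,R_2$ would contradict Lemma~\ref{lem:ordering-rulings} itself). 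Second, ``a parabola through $\Pi(p)$ and $q$ is uniquely determined'' is false as stated — two points do not determine a parabola; the correct (and available) statement is that once $R_q$ is known to pass through $p$, its projection has the form $z(\Pi(p))-y_p(x-x_p)-\tfrac{m(R_q)}{2}(x-x_p)^2$, so matching the value at $x_q\neq x_p$ forces $m(R_q)=m$ and hence $R_q=L_{p,m}$; this tangency-plus-one-point step is exactly how the paper pins down the parabola. Finally, your continuity/crossing discussion is unnecessary: evaluating the sandwich $g_{R_2}\le g_{R_q}\le g_{R_1}$ at $x=x_p$, where the outer two agree, already gives $g_{R_q}(x_p)=z(\Pi(p))$, i.e.\ $R_q$ passes through $p$.
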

\begin{proof}
  After a translation, we may suppose that $p=\0$, so that $R_i=L_{\0,m(R_i)}$ and $\Pi(R_i)$ is the graph of the function $g_i(x)=- \frac{m(R_i)}{2} x^2$. Let $m\in (m(R_1), m(R_2))$, $q=(1,0, - \frac{m}{2})$, and let $M$ be the ruling through $\Psi_f(q)$. 
  By Lemma~\ref{lem:ordering-rulings}, $g_{R_2}(x)\le g_M(x) \le g_{R_1}(x)$, and since the graphs of $g_{R_1}$ and $g_{R_1}$ are tangent to the $x$--axis at $0$, so is the graph of $g_M$. Since $g_M(1)=-\frac{m}{2}$, we have $g_M(x)=- \frac{m}{2} x^2$, so $M=L_{\0,m}$.
\end{proof}

Combining these two lemmas, we get the following characterization of entire ruled graphs.
\begin{cor}\label{cor:entire-ruled}
  Let $\sigma_+,\sigma_-\from \R\to \R$,
  $$\sigma_+(z) = \max \{m\mid L_{Z^z,m}\subset \Gamma_f\}$$
  $$\sigma_-(z) = \min \{m\mid L_{Z^z,m}\subset \Gamma_f\}.$$
  Since $\Gamma_f$ is closed, these minima and maxima exist. Then $\sigma_+$ and $\sigma_-$ are nonincreasing, and $L_{Z^z,m}\subset \Gamma_f$ if and only if $m\in [\sigma_-(z),\sigma_+(z)]$.
\end{cor}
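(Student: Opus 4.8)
The plan is to combine the two preceding lemmas about rulings through points of the $z$-axis with the observation that every point of $\Gamma_f$ lies on some ruling, and that ruling must meet the $z$-axis only if its projection passes through $\0$ — so instead I track rulings by where their projections sit relative to the $z$-axis. First I would record that $\sigma_+$ and $\sigma_-$ are well-defined: the set $\{m \mid L_{Z^z,m}\subset\Gamma_f\}$ is nonempty because $\Gamma_f$ is entire, so $Z^z\in\Gamma_f$ and the ruling through $\Psi_f^{-1}(Z^z)$ — wait, through the point of $\Gamma_f$ projecting appropriately — passes through $Z^z$; and it is closed (hence the sup and inf are attained) because $\Gamma_f$ is a closed set and the slope depends continuously on the ruling, so a limit of rulings of slope $m_i\to m$ through $Z^z$ is again a ruling through $Z^z$ of slope $m$.

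Next I would prove that $m\in[\sigma_-(z),\sigma_+(z)]$ implies $L_{Z^z,m}\subset\Gamma_f$. Pick rulings $R_-$ and $R_+$ of slopes $\sigma_-(z)$ and $\sigma_+(z)$ through $Z^z$; if $\sigma_-(z)<\sigma_+(z)$, then these are two distinct rulings meeting at the point $Z^z$ with $m(R_-)<m(R_+)$, so Lemma~\ref{lem:ruled-char-pts} applied at $p=Z^z$ gives that $L_{Z^z,m}$ is a ruling of $\Gamma_f$ for every $m\in[\sigma_-(z),\sigma_+(z)]$. The reverse inclusion is immediate from the definition of $\sigma_\pm$ as extremal slopes, so $\{m\mid L_{Z^z,m}\subset\Gamma_f\}=[\sigma_-(z),\sigma_+(z)]$ exactly. (The degenerate case $\sigma_-(z)=\sigma_+(z)$ is trivial.)

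The remaining point is monotonicity of $\sigma_+$ and $\sigma_-$. Let $z_1<z_2$ and consider a ruling $R_1$ through $Z^{z_1}$ of slope $\sigma_+(z_1)$ and a ruling $R_2$ through $Z^{z_2}$ of slope $\sigma_+(z_2)$. Using \eqref{eq:gR-form} with $w(R_i)$ having $z$-coordinate $w_z(R_i)$: since $R_i$ passes through $Z^{z_i}=(0,0,z_i)$, we get $g_{R_i}(0)=w_z(R_i)=z_i$, so $w_z(R_1)<w_z(R_2)$, and Lemma~\ref{lem:monotone-slopes} gives $m(R_1)\ge m(R_2)$, i.e.\ $\sigma_+(z_1)\ge\sigma_+(z_2)$. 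The identical argument with rulings of slope $\sigma_-(z_i)$ shows $\sigma_-(z_1)\ge\sigma_-(z_2)$, so both functions are nonincreasing. I do not expect a genuine obstacle here; the only point requiring a little care is the closedness argument establishing that the extremal slopes are attained, where one must make sure that a convergent sequence of horizontal lines through a fixed point, with convergent slopes, limits to a horizontal line through that point contained in the closed set $\Gamma_f$ — this is where "$\Gamma_f$ is closed" is genuinely used.
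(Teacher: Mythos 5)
Your core argument is exactly the combination the paper intends (the paper prints no proof beyond ``combining these two lemmas''): you get the interval structure by applying Lemma~\ref{lem:ruled-char-pts} at $p=Z^z$ to two extremal-slope lines through $Z^z$, you get monotonicity from Lemma~\ref{lem:monotone-slopes} after observing that a ruling through $Z^{z_i}$ meets the $yz$--plane at $Z^{z_i}$ itself, so $w_z(R_i)=g_{R_i}(0)=z_i$, and you get attainment of the extremal slopes from closedness of $\Gamma_f$ (a pointwise limit of lines $L_{Z^z,m_i}\subset\Gamma_f$ with $m_i\to m$ is $L_{Z^z,m}$, which is then contained in the closed set $\Gamma_f$). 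All of that is correct and is the paper's route.

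The step that fails is your justification that $\{m\mid L_{Z^z,m}\subset \Gamma_f\}$ is nonempty. It is not true that $Z^z\in\Gamma_f$ because the graph is entire: $Z^z=(0,0,z)$ lies in $\Gamma_f$ exactly when $f(0,0,z)=0$, and the point of $\Gamma_f$ lying over $(0,0,z)\in V_0$ is $(0,f(0,0,z),z)$, so the ruling through it meets the $yz$--plane at that point, not on the $z$--axis. Concretely, $f(x,0,z)=x+1$ gives the vertical plane $\{y=x+1\}$, an entire ruled intrinsic graph that is disjoint from the $z$--axis, so the slope set is empty for every $z$ and $\sigma_\pm$ are undefined. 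This is a wart the corollary itself shares: the sentence ``since $\Gamma_f$ is closed, these minima and maxima exist'' silently presumes the slope sets are nonempty (and bounded --- closedness alone only gives that a finite supremum is attained), and the paper never argues this. So the right fix is not to derive nonemptiness from entirety, which cannot work, but to treat it as an implicit normalization (e.g.\ restrict to those $z$ for which some horizontal line through $Z^z$ lies in $\Gamma_f$, or assume after a left translation that such lines exist), after which the rest of your proof goes through verbatim and agrees with the paper.
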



Let $m_\infty=\lim_{t\to \infty}\sigma_+(t)$ and $m_{-\infty}=\lim_{t\to -\infty}\sigma_+(t)$.
These limits determine the scaling limit of $\Gamma_f$.
\begin{lemma}\label{lem:limit-f}
  For $t>0$, let $f_t(p)=t^{-1}f(s_{t,t}(p))$ so that $\Gamma_{f_t}=s_{t,t}^{-1}(\Gamma_f)$. For $x\ne 0$, let
  $$F(x,0,z) = \begin{cases}
    m_{-\infty} x & \text{if }m_{-\infty} <\infty \text{ and } z \le - \frac{m_{-\infty}}{2} x^2 \\
    -\frac{2 z}{x}& \text{if } - \frac{m_{-\infty}}{2} x^2 < z < - \frac{m_{\infty}}{2} x^2 \\
    m_\infty x & \text{if }m_{\infty} > - \infty \text{ and } z \ge - \frac{m_{\infty}}{2} x^2.
  \end{cases}$$
  Then $f_t$ converges to $F$ pointwise almost everywhere on $V_0$.
\end{lemma}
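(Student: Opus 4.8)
The plan is to fix a base point $p=(x_0,0,z_0)\in V_0$ with $x_0\ne 0$ and to track the ruling of $\Gamma_f$ whose projection passes through the rescaled point $s_{t,t}(p)=(tx_0,0,t^2z_0)$, reading off $f_t(p)$ from that ruling's slope and base point as $t\to\infty$; this is enough since $\{x_0=0\}$ and the two critical parabolas $z_0=-\tfrac{m_{\pm\infty}}{2}x_0^2$ are null. Since $s_{t,t}$ is an automorphism sending horizontal lines to horizontal lines of the same slope and commuting with $\Pi$, the rulings of $\Gamma_{f_t}=s_{t,t}^{-1}(\Gamma_f)$ are the $s_{t,t}^{-1}(R)$, and the ruling $R^{(t)}$ of $\Gamma_f$ with $\Pi(R^{(t)})\ni s_{t,t}(p)$ satisfies $g_{R^{(t)}}(tx_0)=t^2z_0$ by \eqref{eq:point-slope}. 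Writing $w(R)=(0,w_y(R),w_z(R))$ for $R\cap\{x=0\}$, one checks $w_y(R)=f(0,0,w_z(R))$ (the point $w(R)$ lies on $\Gamma_f$ and projects to $(0,0,w_z(R))$), and expanding $g_{R^{(t)}}$ via \eqref{eq:gR-form} I would obtain the two identities
\[
 f_t(p)=\frac{w_y(R^{(t)})}{t}+m(R^{(t)})x_0,\qquad
 \frac{w_z(R^{(t)})}{t^2}=z_0+\frac{m(R^{(t)})}{2}x_0^2+\frac{w_y(R^{(t)})x_0}{t}.
\]
(If $s_{t,t}(p)$ lies on several rulings these are mutually tangent there by Lemma~\ref{lem:ordering-rulings} and yield the same value of $f_t(p)$, so it is unambiguous. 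When $\Gamma_f$ meets the $z$--axis — as is implicit in Corollary~\ref{cor:entire-ruled} — every $w_y(R)=0$ and the argument below simplifies considerably.)

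The next step is to locate $R^{(t)}$. By Lemma~\ref{lem:monotone-slopes}, rulings are ordered by $w_z$: $w_z(R)<w_z(R')$ forces $g_R\le g_{R'}$ everywhere and $m(R)\ge m(R')$. In particular $\sigma_-(c_a)\ge\sigma_+(c_b)$ whenever $c_a<c_b$, which forces $\sigma_-(c),\sigma_+(c)\to m_{-\infty}$ as $c\to-\infty$ and $\to m_\infty$ as $c\to+\infty$, and makes every slope of $\Gamma_f$ lie in $[m_\infty,m_{-\infty}]$. Comparing $g_{R^{(t)}}(tx_0)=t^2z_0$ with $g_R(tx_0)=-\tfrac{m(R)}{2}t^2x_0^2+O(t)$ for fixed rulings $R$, the signs of $z_0+\tfrac{m_{-\infty}}{2}x_0^2$ and $z_0+\tfrac{m_\infty}{2}x_0^2$ determine where $R^{(t)}$ sits. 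In the wedge region $-\tfrac{m_{-\infty}}{2}x_0^2<z_0<-\tfrac{m_\infty}{2}x_0^2$, choosing fixed rulings of slopes close to $m_{-\infty}$ and to $m_\infty$ (or, in the unbounded cases, of slope on either side of $-2z_0/x_0^2$) sandwiches $R^{(t)}$ for large $t$, so $w_z(R^{(t)})$ stays in a bounded interval, hence $w_z(R^{(t)})/t^2\to0$ and $w_y(R^{(t)})=f(0,0,w_z(R^{(t)}))$ stays bounded; the second identity then pins $m(R^{(t)})\to-2z_0/x_0^2\in(m_\infty,m_{-\infty})$, and the first gives $f_t(p)\to-2z_0/x_0=F(p)$. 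In the region $z_0<-\tfrac{m_{-\infty}}{2}x_0^2$ (so $m_{-\infty}<\infty$), $s_{t,t}(p)$ eventually lies below every fixed ruling, so $w_z(R^{(t)})\to-\infty$ and $m(R^{(t)})\to m_{-\infty}$, and it remains to show $w_y(R^{(t)})/t\to0$. The region $z_0>-\tfrac{m_\infty}{2}x_0^2$ is handled symmetrically, using $c\to+\infty$.

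The hard part will be that last bound, because there the base point $w_z(R^{(t)})$ escapes to infinity and boundedness of $w_y$ is no longer free. The plan is: for each $\e>0$ choose $C_\e$ so that every ruling with $w_z<C_\e$ has slope in $(m_{-\infty}-\e,m_{-\infty}]$; comparing $g_{R_a}$ and $g_{R_b}$ for rulings with $w_z=c_a<c_b<C_\e$ and using that a nonnegative quadratic has nonpositive discriminant yields the $\tfrac12$--Hölder estimate $|f(0,0,c_a)-f(0,0,c_b)|\le\sqrt{2\e\,|c_a-c_b|}$; feeding this into the two identities and bootstrapping gives $|w_z(R^{(t)})|\asymp t^2$, hence $|w_y(R^{(t)})|/t=O(\sqrt\e)$, and letting $\e\to0$ gives $f_t(p)\to m_{-\infty}x_0=F(p)$. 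Since $\{x=0\}$ and the critical parabolas are null, this establishes $f_t\to F$ almost everywhere. (When $\Gamma_f$ contains the $z$--axis this final step is vacuous, as then $w_y\equiv0$ and $f_t(p)=m(R^{(t)})x_0$ throughout.)
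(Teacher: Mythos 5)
Your proposal is correct and follows essentially the same route as the paper: identify the ruling of $\Gamma_f$ lying over the rescaled point, order it against fixed rulings via Lemma~\ref{lem:ordering-rulings} and Lemma~\ref{lem:monotone-slopes}, sandwich it between two fixed rulings in the wedge region, and in the outer regions compare it with a fixed ruling whose slope is $\epsilon$-close to $m_{\pm\infty}$, using the nonnegative-quadratic (discriminant) estimate to control the error. The only real difference is where that estimate is applied: the paper uses it directly at $x=tx_0$ to bound $|g_{S_t}'(tx_0)-g_{R_1}'(tx_0)|\le C t\sqrt{\epsilon}$, while you apply it at $x=0$ to get a H\"older bound on $w_y$ and then bootstrap through your two identities to get $|w_z(R^{(t)})|\lesssim t^2$ uniformly in $\epsilon$ — a slightly longer but equally valid way to reach the same $O(\sqrt{\epsilon})$ conclusion.
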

\begin{proof}
  Let $x_0,z_0\in \R$.
  We first consider the case that $x_0\ne 0$ and $z_0 \in (- \frac{m_{-\infty}}{2} x_0^2, -\frac{m_{\infty}}{2} x_0^2)$.   Let $R_1$, $R_2$ be rulings of $\Gamma_f$ such that $- \frac{m(R_1)}{2} x_0^2 < z_0 < - \frac{m(R_2)}{2} x_0^2$. Then
  $$g_{R_1}(tx_0) = - \frac{m(R_1)}{2} (tx_0)^2 + O(t) < t^2z_0$$
  when $t$ is sufficiently large; likewise, $g_{R_2}(tx_0) > t^2z_0$ when $t$ is sufficiently large.  That is, there is some $t_0>0$ such that $s_{t,t}(x_0,0,z_0)=(tx_0,0, t^2z_0)$ is between $\Pi(R_1)$ and $\Pi(R_2)$ when $t>t_0$.
  
  For each $t>0$, let $S_t$ be a ruling of $\Gamma_f$ such that $(tx_0,0,t^2z_0)\in \Pi(S_t)$.  When $t>t_0$, we have $g_{R_1}(tx_0) < t^2z_0 < g_{R_2}(tx_0)$ and thus $g_{R_1} \le g_{S_t} \le g_{R_2}$ on all of $\R$. We will use this inequality to bound the coefficients of $g_{S_t}$.

  Let $C>0$ be such that $|g_{R_1}(x)|<C$ and $|g_{R_2}(x)|<C$ for all $x\in [-1,1]$. Then $|g_{S_t}(x)|<C$ for all $x\in  [-1,1]$. By \eqref{eq:gR-form}, this impies $|w_z(S_t)|=|g_{S_t}(0)| < C$ and
  $$|w_y(S_t)| = \left| \frac{g_{S_t}(-1)-g_{S_t}(1)}{2}\right| < C.$$
  By \eqref{eq:gR-form} with $x=t x_0$,
  $$g_{S_t}(tx_0) + \frac{m(S_t)}{2} (tx_0)^2 
  = w_z(S_t) - w_y(S_t) t x_0,$$
  so
  $$\left|t^2 z_0+\frac{m(S_t)}{2} (tx_0)^2\right| 
  \le C + C |tx_0|,$$
  and
  $$\left|\frac{m(S_t)}{2} + \frac{z_0}{x_0^2}\right| \le \frac{C}{|tx_0|^2} + \frac{C}{|tx_0|}.$$
  That is, $\lim_{t\to \infty} m(S_t) = - \frac{2z_0}{x_0^2}$.
  Since the graph of $g_{S_t}$ is a characteristic curve, $g_{S_t}$ satisfies the differential equation
  $g_{S_t}'(x) = -f(x,0,g_{S_t}(x))$. Therefore, 
  \begin{multline*}
    \lim_{t\to\infty} f_t(x_0,0,z_0)=\lim_{t\to\infty} t^{-1} f(tx_0,0,t^2z_0)=\lim_{t\to\infty}-t^{-1} g_{S_t}'(t x_0) \\
    = \lim_{t\to\infty} -t^{-1}\left(- m(S_t) t x_0 - w_y(S_t)\right)
    = \lim_{t\to\infty}  m(S_t) x_0 =-\frac{2z_0}{x_0},
  \end{multline*}
  as desired.

  Next, we consider the case that $m_\infty>-\infty$ and $\frac{z_0}{x_0^2}>-\frac{m_\infty}{2}$. Let $\epsilon\in (0,1)$ and let $R_1$ be a ruling of $\Gamma_f$ such that $m_\infty \le m(R_1) < m_\infty + \epsilon$. Then
  there is a $t_0$ such that
  \begin{equation}\label{eq:gR1-x0}
    -\frac{m_\infty + \epsilon}{2} t^2x_0^2< g_{R_1}(tx_0)  < t^2 z_0
  \end{equation}
  when $t>t_0$.

  Let $t>t_0$ and let $S_t$ be a ruling of $\Gamma_f$ such that $(tx_0,0,t^2z_0)\in \Pi(S_t)$. 
  By Lemma~\ref{lem:ordering-rulings}, $g_{R_1}(x) \le g_{S_t}(x)$ for all $x\in \R$, and by Lemma~\ref{lem:monotone-slopes}, $m(R_1) \ge m(S_t) \ge m_\infty$. Let $h(x) = g_{S_t}(x) - g_{R_1}(x)$. We have $h(x)\ge 0$ for all $x$ and $h''(x)=m(R_1)-m(S_t) \in [0,\epsilon]$. Since $h$ is a quadratic, for any $w\in \R$,
  $$0 \le h(tx_0 + w) = h(tx_0) + h'(t x_0) w + \frac{1}{2} h''(t x_0)w^2 \le h(tx_0) + h'(t x_0) w + \frac{\epsilon}{2} w^2.$$
  Letting $w= - \epsilon^{-1} h'(t x_0)$, this implies
  $$h(tx_0) - \frac{h'(t x_0)^2}{2 \epsilon} \ge 0$$
  and thus $|h'(t x_0)|\le \sqrt{2\epsilon h(t x_0)} = \sqrt{2\epsilon (t^2z_0-g_{R_1}(t x_0))}$. By \eqref{eq:gR1-x0},
  $$|h'(t x_0)| \le \sqrt{2 \epsilon t^2 \left(z_0 +     \frac{m_\infty + \epsilon}{2} x_0^2\right)} \le C|t|\sqrt{\epsilon},$$
  where $C=\sqrt{2z_0 + (m_\infty + 1) x_0^2}$.

  Therefore,
  $$\left|f_t(x_0,0,z_0) - \frac{-g'_{R_1}(tx_0)}{t}\right| = \left|\frac{-g'_{S_t}(tx_0)}{t} - \frac{-g'_{R_1}(tx_0)}{t}\right| = |t|^{-1} |h'(tx_0)| \le C\sqrt{\epsilon}.$$
  When $t>\max\{t_0,\epsilon^{-1}|w_y(R_1)|\}$,
  \begin{align*}
    \big|f_t(x_0,0,z_0)
    & - m_\infty x_0\big| \\
    & \le \left|f_t(x_0,0,z_0) - \frac{-g'_{R_1}(tx_0)}{t}\right| + \left|\frac{-g'_{R_1}(tx_0)}{t} - m(R_1) x_0\right| + |m(R_1) - m_\infty||x_0| \\
    & \le C \sqrt{\epsilon} + \frac{|w_y(R_1)|}{t} + \epsilon |x_0| \\
    &\le C \sqrt{\epsilon} + \epsilon(1+|x_0|).
  \end{align*}
  Since $\epsilon$ was arbitrary, $\lim_{t\to \infty} f_t(x_0,0,z_0) = m_\infty x_0$, as desired.  The case that $z_0<-\frac{m_{-\infty}}{2} x_0^2$ is similar.
\end{proof}

Finally, we prove Lemma~\ref{lem:scaling-limits-are-BPs}.
\begin{proof}[Proof of Lemma~\ref{lem:scaling-limits-are-BPs}]
  For $-\infty \le M\le m \le \infty$, let
  $$F_{m,M}(x,0,z) = \begin{cases}
    m x & \text{if }m <\infty \text{ and } z \le - \frac{m}{2} x^2 \\
    -\frac{2 z}{x}& \text{if } - \frac{m}{2} x^2 < z < - \frac{M}{2} x^2 \\
    M x & \text{if }M > - \infty \text{ and } z \ge - \frac{M}{2} x^2.
  \end{cases}$$
  By Lemma~\ref{lem:limit-f}, $f_t$ converges pointwise almost everywhere to $F:=F_{m_{-\infty},m_\infty}$, so $\one_{\Gamma_{f_t}^+}$ converges weakly to $\one_{\Gamma_{F}^+}$. We claim that for any $m,M$ there are $\theta$ and $u$ such that, up to a set of measure zero, $\Gamma_{F_{m,M}}^+=\rot_\theta(\BP_u^+)$.

  Suppose that $m$ and $M$ are both finite. Then $\Gamma_F$ contains 
  the intrinsic graph of $a_{M}(x,0,z)=Mx$ over the set $\{z\le -M x^2\}$. This is the upper half-plane through $\0$ of slope $M$, bounded by the $xy$--plane. Likewise, $\Gamma_F$ contains the lower half-plane through $\0$ of slope $m$, bounded by the $xy$--plane. Finally, $\Gamma_F$ contains the graph of $h(x,0,z)=-\frac{2 z}{x}$ over the set $\{- \frac{m}{2} x^2 < z < - \frac{M}{2} x^2\}$; this is the union of the horizontal lines through the origin with slopes between $m$ and $M$. It follows that, up to a set of measure $0$,  $\Gamma_{F_{m,M}}^+$ consists of two quadrants of $\HH$, one above the $xy$--plane, bounded by the vertical plane through $\0$ of slope $M$, and one below the $xy$--plane, bounded by the vertical plane through $\0$ of slope $m$.

  Thus, if $m_{-\infty}$ and $m_\infty$ are finite, then $\Gamma_F^+$ is a union of two quadrants. If $m_{-\infty}$ or $m_\infty$ are infinite, then $F$ is a limit of $F_{m,M}$'s. As $m\to m_{-\infty}$ and $M\to m_\infty$, $\Gamma_{F_{m,M}}^+$ converges to a union of two quadrants with slopes $m_{-\infty}$ and $m_{\infty}$, so $\Gamma_F^+$ is again a union of two quadrants.  Any such union can be written as $\rot_\theta(\BP_u^+)$ for some $u\in [0,\infty]$ and $\theta\in \R$, so $\Gamma_{f_t}^+\to \rot_\theta(\BP_u^+)$ as desired.
  
  Finally, suppose that $u=0$, so that $\rot_\theta(\BP_u)$ is a vertical plane $V$. In this case, $m_\infty=m_{-\infty}$; let $m=m_\infty$. Then, by Corollary~\ref{cor:entire-ruled}, every ruling of $\Gamma_f$ has slope $m$. By \eqref{eq:point-slope}, if $L_1$ and $L_2$ are two horizontal lines with slope $m$ and $w_y(L_1)\ne w_y(L_2)$, then $\Pi(L_1)$ and $\Pi(L_2)$ intersect transversely, so $L_1$ and $L_2$ cannot both be rulings of the same intrinsic graph. It follows that any two rulings $R_1,R_2\subset \Gamma_f$ have the same projection to the $xy$--plane, i.e.,
  $$\pi(R_1)=\pi(R_2)=\left\{(x,y,0)\in \HH \mid y=mx+ w_y(R_1)\right\}.$$
  Therefore, $\Gamma_f=\{(x,y,z)\in \HH\mid y=mx+ w_y(R_1)\}$ is a vertical plane.
\end{proof}

\section{Proof of Theorem~\ref{thm:ruled-bernstein}}\label{sec:proof-ruled-bernstein}

Now we prove Theorem~\ref{thm:ruled-bernstein}. We will need the following closure result for perimeter-minimizing subsets of $\HH$. 
This statement and proof is based on Theorem~21.14 of \cite{Maggi}.

\begin{prop}\label{prop:limits-of-min}
  Let $A\subset \HH$ be an open set and let
  $E_1,E_2,\dots\subset \HH$ be a sequence of perimeter minimizers in $A$ such that $E_i\cap A \to E\cap A$, where $\Per_E(U)<\infty$ for every $U\Subset A$. Then $E$ is a perimeter minimizer in $A$.
\end{prop}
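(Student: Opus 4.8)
The plan is to follow the standard lower-semicontinuity-plus-upper-bound scheme for closure of perimeter minimizers, adapted to $\HH$, exactly as in the Euclidean case (Maggi, Theorem 21.14). Fix a set $F$ with locally finite perimeter and a radius $r>0$ such that $E\symdiff F\Subset B(\0,r)\cap A$; we must show $\Per_E(B(\0,r)\cap A)\le \Per_F(B(\0,r)\cap A)$. Write $W=B(\0,r)\cap A$ and choose an intermediate open set $W'$ with $E\symdiff F\Subset W'\Subset W$. The first step is to produce, for each $i$, a competitor $F_i$ for $E_i$ by gluing: set $F_i$ equal to $F$ inside $W'$ and equal to $E_i$ outside a slightly larger set, interpolating on the shell in between. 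Since $E_i\to E$ in $L^1_{\mathrm{loc}}$ and $E=F$ on the shell $W\setminus \overline{W'}$, the sets $E_i$ and $F$ are $L^1$-close there, so the interpolation can be done at the cost of a perimeter error that tends to $0$ as $i\to\infty$.

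The key technical point is the gluing estimate. Using a good choice of the interpolating radius inside the shell (a Fubini/mean-value argument over a family of concentric sub-balls, or concentric scaled balls adapted to the Korányi gauge), one gets
\[
\Per_{F_i}(W)\le \Per_F(W')+\Per_{E_i}(W\setminus \overline{W'})+o_i(1),
\]
where $o_i(1)\to 0$; here one uses that the $\HH$-perimeter of $E_i$ restricted to thin shells is controlled and that $|E_i\symdiff E|\to 0$ on the shell. This is the analogue of Maggi's Lemma; the only thing to check is that the cut-and-paste of finite-perimeter sets along a hypersurface behaves the same way for $\Per_\HH$ as for the Euclidean perimeter, which follows from the definition of $\Per$ via horizontal vector fields and the coarea/slicing properties already invoked in the paper (or, alternatively, from the kinematic formula \eqref{eq:kinematic}, slicing on horizontal lines, where the one-dimensional gluing is elementary). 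Because $E_i$ is a perimeter minimizer in $A$ and $E_i\symdiff F_i\Subset W\subset A$, minimality gives $\Per_{E_i}(W)\le \Per_{F_i}(W)$, hence
\[
\Per_{E_i}(W')\le \Per_{E_i}(W)-\Per_{E_i}(W\setminus\overline{W'})\le \Per_F(W')+o_i(1).
\]

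Now let $i\to\infty$. By lower semicontinuity of the $\HH$-perimeter under $L^1_{\mathrm{loc}}$ convergence \cite[2.12]{FSSCRectifiability}, $\Per_E(W')\le\liminf_i\Per_{E_i}(W')\le\Per_F(W')$. Finally one removes the gap between $W'$ and $W$: since $E=F$ on $W\setminus\overline{W'}$, we have $\Per_E(W)=\Per_E(W')+\Per_E(W\setminus\overline{W'})$ and likewise for $F$ on that shell, and the shell contributions agree; letting $W'\uparrow W$ along a sequence of admissible intermediate sets (equivalently, noting the estimate holds for every such $W'$ and the shell contribution of $E$ can be taken arbitrarily small by choosing $W'$ close to $W$, using $\Per_E(A')<\infty$ for $A'\Subset A$) yields $\Per_E(W)\le\Per_F(W)$. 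Since $F$ and $r$ were arbitrary with $E\symdiff F\Subset B(\0,r)\cap A$, $E$ is a perimeter minimizer in $A$.

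The main obstacle is the gluing estimate: one must verify that excising $E_i$ from a ball and replacing it by $F$ creates no uncontrolled perimeter on the interface, and that the interface term can be absorbed into the shell perimeter $\Per_{E_i}(W\setminus\overline{W'})$ plus a vanishing error. In $\R^n$ this is classical; here it requires knowing that $\Per_\HH$ localizes well and that sets of finite $\HH$-perimeter can be cut along level sets of the Korányi gauge (or along the concentric balls used) with the expected additivity and with interface perimeter bounded by the trace of $\one_{E_i}-\one_F$, which is small on the shell. Everything else—lower semicontinuity, minimality of each $E_i$, and the final limit $W'\uparrow W$—is routine.
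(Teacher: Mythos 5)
Your proposal is correct and follows essentially the same route as the paper's proof, which is likewise an adaptation of Maggi's Theorem~21.14: glue competitors $F_i=(F\cap G)\cup(E_i\setminus G)$ where the cutting region $G$ (a union of balls with radii chosen by a coarea/Fubini--Fatou argument) makes the interface term vanish along a subsequence, then apply minimality of each $E_i$ and lower semicontinuity of the perimeter. The one ingredient you flag as the main obstacle --- controlling the interface contribution of the cut-and-paste --- is handled in the paper exactly as you suggest, via the Franchi--Serapioni--Serra Cassano representation $\Per_E(U)=\cS^3_\infty(U\cap\partial_{\cH^4}E)$ together with the inclusion $\partial_{\cH^4}F_i\subset(\partial_{\cH^4}F\cap\overline{G})\cup(\partial_{\cH^4}E_i\setminus G)\cup\big((E_i^{(1)}\symdiff F^{(1)})\cap\partial G\big)$.
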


We will need the following metric characterization of perimeter.
For a measurable set $E\subset \HH$, let $E^{(1)}$ be the set of density points of $E$, i.e,
$$E^{(1)}:=\left\{p\in \HH \mid \lim_{r\to 0^+} \frac{\cH^4(B(p,r)\cap E)}{\cH^4(B(p,r))} = 1\right\},$$
and let $E^{(0)}=(\HH\setminus E)^{(1)}$. 
Let $\partial_{\cH^4}E$ be the measure-theoretic boundary of $E$, i.e. 
$$\partial_{\cH^4}E:=\HH\setminus \left(E^{(1)} \cup E^{(0)}\right).$$
Franchi, Serapioni, and Serra Cassano \cite{FSSCRectifiability} showed that there is a left-invariant homogeneous metric $d_\infty$ and a corresponding spherical Hausdorff measure $\cS^3_\infty$ such that for any set $E\subset \HH$ with locally finite perimeter,
$$\Per_E(U) = \cS_\infty^3(U\cap \partial_{\cH^4}E).$$

\begin{proof}[Proof of Proposition~\ref{prop:limits-of-min}]
  Let $F\subset \HH$ be a set such that $E\symdiff F\Subset A$ and $\Per_F(U)<\infty$ for every $U\Subset A$. We claim that $\Per_E(A)\le \Per_F(A)$.

  Our first goal is to find a set $G$ such that $E\symdiff F \Subset G \Subset A$ and such that 
  \begin{equation}\label{eq:good-G1}
    \cS_\infty^3\left(\partial_{\cH^4} F\cap \partial G\right)=0
  \end{equation}
  and 
  \begin{equation}\label{eq:good-G2}
    \liminf_{i\to \infty} \cS_\infty^3\left(\big(E^{(1)}\symdiff E_i^{(1)}\big) \cap \partial G\right)=0.
  \end{equation}

  By the compactness of $E\symdiff F$, there are finitely many points $p_j$ and radii $r_j$ such that $E\symdiff F \Subset \bigcup_j B(p_j,r_j)$ and $B(p_j,2r_j)\Subset A$ for all $j$.
  
  We claim that there is a $t\in [1,2]$ such that $G_t=\bigcup_j B(p_j, tr_j)$ satisfies \eqref{eq:good-G1} and \eqref{eq:good-G2}. Since
  $\cS_\infty^3(\partial_{\cH^4} F\cap G_2) <\infty$, $G_t$ satisfies \eqref{eq:good-G1} for all but countably many $t\in [1,2]$.
  Furthermore, the coarea formula implies that
  $$\int_1^2 \cS_\infty^3\left(\big(E^{(1)}\symdiff E_i^{(1)}\big) \cap \partial B(p_j, t r_j)\right)\ud t \lesssim r_j^{-1} \cH^4\left(\big(E^{(1)}\symdiff E_i^{(1)}\big) \cap B(p_j,2r_j)\right).$$
  By Fatou's lemma, 
  \begin{align*}
    \int_1^2 &\liminf_{i\to \infty}  \cS_\infty^3\left(\big(E^{(1)}\symdiff E_i^{(1)}\big) \cap \partial G_t \right)\ud t \\
    & \le 
    \liminf_{i\to \infty} \int_1^2 \sum _j \cS_\infty^3\left(\big(E^{(1)}\symdiff E_i^{(1)}\big) \cap \partial B(p_j, t r_j) \right)\ud t \\
    &\lesssim \liminf_{i\to \infty}\,
    \sum_j r_j^{-1} \cH^4\left(\big(E^{(1)}\symdiff E_i^{(1)}\big) \cap B(p_j,2r_j)\right)
    = 0,
  \end{align*}
  so $G_t$ satisfies \eqref{eq:good-G2} for almost every $t\in [1,2]$. Let $G=G_t$ for some $t$ such that \eqref{eq:good-G1} and \eqref{eq:good-G2} are satisfied.

  Let $\overline{G}$ be the closure of $G$, and for each $i$, let 
  $F_i=(F\cap G)\cup (E_i\setminus G).$
  To bound $\partial_{\cH^4} F_i$, note that $\partial_{\cH^4} F_i\cap G=\partial_{\cH^4} F\cap G$ and $\partial_{\cH^4} F_i\setminus \overline{G}=\partial_{\cH^4} E_i\setminus \overline{G}$. If $p\in \partial_{\cH^4} F_i\cap \partial G$, then either $p\in \partial_{\cH^4} F$, $p\in \partial_{\cH^4} E_i$, or $p\in E_i^{(1)}\symdiff F^{(1)}$. Therefore,
  \begin{equation}\label{eq:Fi-bdry}
    \partial_{\cH^4} F_i\subset \big(\partial_{\cH^4} F\cap \overline{G}\big) \cup \left(\partial_{\cH^4} E_i \setminus G\right) \cup \left(\big(E_i^{(1)}\symdiff F^{(1)}\big)\cap \partial G\right).
  \end{equation}
  
  Let $U$ be an open set such that $G\Subset U\Subset A$. Since $E_i$ is a perimeter minimizer and $E_i \symdiff F_i\Subset U$, 
  $$\Per_{E_i}(U) \le \Per_{F_i}(U) = \cS^3_\infty(U\cap \partial_{\cH^4}F_i).$$
  By \eqref{eq:Fi-bdry},
  $$\Per_{E_i}(U)\le \Per_{F}(\overline{G}) + \Per_{E_i}(U\setminus G) + \cS_\infty^3\left(\big(E^{(1)}\symdiff E_i^{(1)}\big)\cap \partial G\right).$$
  Subtracting $\Per_{E_i}(U\setminus G)$ from both sides,
  $$\Per_{E_i}(G) \le \Per_{F}(\overline{G}) + \cS_\infty^3\left(\big(E^{(1)}\symdiff E_i^{(1)}\big)\cap \partial G\right).$$
  
  By the lower semicontinuity of perimeter and equations \eqref{eq:good-G1}--\eqref{eq:good-G2},
  $$\Per_E(G)
    \le \liminf_{i\to \infty}\, \Per_{E_i}(G)
    \le \Per_{F}(\overline{G})=\Per_{F}(G),$$
  as desired.
\end{proof}

Finally, we prove Theorem~\ref{thm:ruled-bernstein}.
\begin{proof}[Proof of Theorem~\ref{thm:ruled-bernstein}]
  Let $\Gamma$ be an entire area-minimizing ruled intrinsic graph and let $\Gamma^+$ be its epigraph, so that $\Gamma^+$ is perimeter-minimizing. By Lemma~\ref{lem:scaling-limits-are-BPs}, there are $\theta$ and $u\in [0,\infty]$ such that $s_{t,t}^{-1}(\Gamma^+)\to \rot_\theta(\BP_u)$ as $t\to \infty$. By Proposition~\ref{prop:limits-of-min}, $\rot_\theta(\BP^+_u)$ is perimeter-minimizing; we claim that $u=0$.

  By Theorem~\ref{thm:minimal-strips}, $\BP^+_u$ is not perimeter-minimizing when $u\in (0,\infty)$. We claim that $\BP^+_\infty$ is not perimeter-minimizing. Recall that $\BP_\infty^+=\{(x,y,z)\in \HH\mid xz > 0\}$; one calculates that
  \begin{align*}
    Y^{-t}\BP_\infty^+
    &=\left\{\left(x,y-t,z+\frac{tx}{2}\right)\in \HH\mid x z > 0\right\}\\
    &=\left\{(x,y,z)\in \HH\mid x \left(z-\frac{tx}{2}\right) > 0\right\}.
  \end{align*}
  That is, $\partial(Y^{-t}\BP_\infty^+)$ consists of the plane $\{x=0\}$ and the plane $\{z=\frac{tx}{2}\}$. As $t\to \infty$, these planes grow closer together. In fact, $\cH^4(\BP_\infty^+\cap B(Y^t,1)) \to 0$, $\cH^3(\BP_\infty^+\cap \partial B(Y^t,1))\to 0$, and $\Per_{\BP_\infty^+ }(B(Y^t,1))\gtrsim 1$. Let $C_t=\BP_\infty^+ \setminus B(Y^t,1)$; then
  $$\Per_{C_t}(B(Y^t,2)) = \Per_{\BP_\infty^+}(B(Y^t,2)\setminus B(Y^t,1)) + \cH^3(\BP_\infty^+\cap \partial B(Y^t,1)),$$
  and $\Per_{C_t}(B(Y^t,2)) < \Per_{\BP_\infty^+}(B(Y^t,2))$
  when $t$ is large. Thus $\BP^+_\infty$ is not area-minimizing.

  The only remaining possibility is that $s_{t,t}(\Gamma^+)\to \rot_\theta(\BP^+_0)$. By the second part of Lemma~\ref{lem:scaling-limits-are-BPs}, this implies that $\Gamma$ is a vertical plane.
\end{proof}

\section{Non-uniqueness of area-minimizers}\label{sec:non-unique}

In this section, we consider the boundary case of Theorem~\ref{thm:minimal-strips}, where $\sigma(z)=-2z$. By Theorem~\ref{thm:minimal-strips}, the surface
$$\Sigma = \bigcup_{z\in \R} [(-1, 2z, z), (1, -2z, z)]$$
is area-minimizing, but Theorem~\ref{thm:non-unique} asserts that there are uncountably many area-minimizing surfaces with the same boundary. In this section, we will prove Theorem~\ref{thm:non-unique}.

First, note that for all $z_1,z_2\in \R$, 
\begin{equation}\label{eq:all-rulings}
  (-1, 2z_1,z_1)^{-1} (1,-2z_2,z_2) = (1, - 2z_1, -z_1) (1,-2z_2,z_2) = (2, -2z_1-2z_2, 0),
\end{equation}
so any two points $(-1, 2z_1,z_1)$ and $(1, -2z_2,z_2)$ are connected by a horizontal line. For any surjective continuous increasing function $\rho\from \R\to \R$, let $\Sigma_\rho$ be the ruled surface
$$\Sigma_\rho = \bigcup_{z\in \R} [(-1, 2z,z), (1,-2\rho(z),\rho(z))].$$
Let
$$F_\rho(x,z) = (-1, 2z, z) \cdot (1,-z-\rho(z),0)^{x+1}$$
parametrize $\Sigma_\rho$ and let 
$U=\{(x,y,z)\in \HH\mid -1<x<1\}$. We claim that $\Sigma_\rho$ is an intrinsic graph whose epigraph is monotone in $U$.

\begin{lemma}\label{lem:non-unique-monotone}
  Let $a_1,a_2,b_1,b_2$ be such that $b_1<b_2$ and $a_1< a_2$. For $i=1,2$, let $R_i=((-1, 2a_i,a_i),(1, -2 b_i,b_i))$. There are no horizontal lines from $R_1$ to $R_2$.
\end{lemma}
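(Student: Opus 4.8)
The plan is to reduce the lemma to a short positivity computation. Two points $p,q\in\HH$ lie on a common horizontal line precisely when $p^{-1}q\in\mathsf{A}$, that is, precisely when $z(p^{-1}q)=0$; hence it suffices to show that $z(p^{-1}q)\neq 0$ for every $p\in R_1$ and every $q\in R_2$ (which in particular also shows $R_1\cap R_2=\emptyset$).

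First I would fix explicit parametrizations of the two rulings. By the computation in \eqref{eq:all-rulings}, $(-1,2a_i,a_i)^{-1}(1,-2b_i,b_i)=(2,-2(a_i+b_i),0)$, so $R_i$ is an open subsegment of the horizontal line of slope $-(a_i+b_i)$ through $(-1,2a_i,a_i)$; parametrizing by the $x$--coordinate,
$$R_i=\Bigl\{\,\ell_i(x):=\bigl(x,\ a_i-b_i-(a_i+b_i)x,\ \tfrac{a_i+b_i}{2}+\tfrac{b_i-a_i}{2}\,x\bigr)\ \Bigm|\ x\in(-1,1)\,\Bigr\}.$$
Then I would substitute $p=\ell_1(x_1)$, $q=\ell_2(x_2)$ into the group law \eqref{eq:heisMult} and compute $z(p^{-1}q)$. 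Writing $\delta_a=a_2-a_1>0$ and $\delta_b=b_2-b_1>0$ and using $a_2+b_2-a_1-b_1=\delta_a+\delta_b$, the terms should collect so that the coefficients of $x_1$ and of $x_2$ in $2z(p^{-1}q)$ both equal $\delta_b-\delta_a$, giving
\begin{align*}
z(p^{-1}q)&=\tfrac12\bigl[(\delta_a+\delta_b)(1+x_1x_2)+(\delta_b-\delta_a)(x_1+x_2)\bigr]\\
&=\tfrac12\bigl[\delta_a(1-x_1)(1-x_2)+\delta_b(1+x_1)(1+x_2)\bigr].
\end{align*}

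The lemma then follows at once: for $x_1,x_2\in(-1,1)$ the products $(1-x_1)(1-x_2)$ and $(1+x_1)(1+x_2)$ are both strictly positive, and $\delta_a,\delta_b>0$, so $z(p^{-1}q)>0$. (On the closed segments the bracket vanishes exactly when $\{x_1,x_2\}=\{-1,1\}$, i.e.\ only for the two pairs of endpoints already joined by a horizontal line via \eqref{eq:all-rulings}; this is why the open rulings are the right objects here and why both hypotheses $a_1<a_2$ and $b_1<b_2$ are needed.) The only step calling for care is the bookkeeping in the Heisenberg product that produces the factorization above; everything else is immediate, so I do not anticipate a genuine obstacle.
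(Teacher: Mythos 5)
Your proof is correct. I checked the parametrization $\ell_i(x)=\bigl(x,\ a_i-b_i-(a_i+b_i)x,\ \tfrac{a_i+b_i}{2}+\tfrac{b_i-a_i}{2}x\bigr)$ against the group law (it does hit the two prescribed endpoints at $x=\pm1$), and the claimed collection of terms is right: $2z(\ell_1(x_1)^{-1}\ell_2(x_2))=(\delta_a+\delta_b)(1+x_1x_2)+(\delta_b-\delta_a)(x_1+x_2)=\delta_a(1-x_1)(1-x_2)+\delta_b(1+x_1)(1+x_2)$, which is strictly positive for $x_1,x_2\in(-1,1)$, and the criterion ``$p,q$ lie on a common horizontal line iff $z(p^{-1}q)=0$'' is the correct one. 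Your route differs from the paper's: the paper argues in the style of the Cheeger--Kleiner hyperboloid lemma, first locating the point where the projections $\pi(R_1),\pi(R_2)$ cross in the $xy$--plane, lifting it to points $q_1,q_2$ on the rulings, and then showing via Baker--Campbell--Hausdorff that $q_1V_1^s$ and $q_2V_2^t$ are horizontally connected iff $st=s_0t_0$, which has no solutions with $s,t$ strictly inside $(s_0,t_0)$ since $s_0t_0<0$. Your version skips the projection/intersection step entirely and replaces it by a single explicit computation of $z(p^{-1}q)$ whose positive factorization makes the conclusion (and the identification of the only boundary pairs where it degenerates, namely $\{x_1,x_2\}=\{-1,1\}$ with opposite signs) completely transparent; the paper's version, in exchange, exhibits the geometric mechanism (the ``hyperbola'' $st=\mathrm{const}$ of horizontally connected pairs) that recurs elsewhere in the literature. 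One small remark: your factorization actually shows the conclusion already under $a_1\le a_2$, $b_1\le b_2$ (not both equal), so the parenthetical claim that both strict inequalities are ``needed'' applies only to the statement for closed segments, not to the open rulings; this does not affect the proof.
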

\begin{proof}
  Our argument is based on the hyperboloid lemma proved in \cite[Lemma~2.4]{CheegerKleinerMetricDiff}, but we give a self-contained proof.
  
  Let $\pi\from \HH\to \R^2$, $\pi(x,y,z)=(x,y)$ be the projection to the $xy$--plane. Then $\pi(R_i)=((-1,2a_i), (1,-2b_i))$, and since $b_1<b_2$ and $a_1< a_2$, the projections $\pi(R_1)$ and $\pi(R_2)$ intersect at some point $p=(x_0,y_0)$. Let $q_i = (x_0,y_0,z_i) \in R_i$ be the point such that $\pi(q_i)=p$.

  Let $V_i=(1, -a_i-b_i, 0)$, $s_0=-1-x_0$ and $t_0=1-x_0$ so that
  $R_i=\{q_i V_i^r\mid r\in (s_0,t_0)\}.$
  Suppose that $q_1 V_1^s$ is connected to $q_2 V_2^t$ by a horizontal segment. Recalling that $V_1^s$ is just alternate notation for $sV_1$, we have $q_1 \cdot sV_1 \cdot W = q_2 \cdot t V_2$ for some $W\in \mathsf{A}$. Projecting to $\R^2$, we see that 
  $$\pi(q_1 \cdot sV_1 \cdot W) = p + sV_1 + W = p + t V_2,$$
  i.e., $W=tV_2 - s V_1$ and 
  $$sV_1\cdot (tV_2 - s V_1) \cdot (-t V_2) = (-q_1) \cdot q_2 = (z_2-z_1) Z.$$
  By the Baker--Campbell--Hausdorff formula, $V \cdot W = V + W + \frac{1}{2}[V,W]$, so
  $$sV_1\cdot (tV_2 - s V_1) \cdot (-t V_2) 
  = \frac{1}{2} [sV_1, tV_2 - s V_1] + \frac{1}{2} [t V_2, -tV_2] = \frac{st}{2} [V_1, V_2].$$
  We calculate that $[V_1,V_2]=wZ$, where $w = \frac{1}{2}(a_1-a_2+ b_1 - b_2)>0$, so $z_2-z_1=w s t$.

  By \eqref{eq:all-rulings}, there is a horizontal line from $q_1V_1^{s_0}=(-1,2a_1,a_1)$ to $q_2V_2^{t_0}=(1,-2b_2,b_2)$, so $z_2-z_1= w s_0 t_0$. Therefore, $p_1 V_1^{s}$ is connected to $p_2 V_2^{t}$ by a horizontal line if and only if $st=s_0t_0$. But there are no $s, t\in (s_0,t_0)$ such that $st=s_0t_0$, so there are no horizontal lines connecting $R_1$ and $R_2$.
\end{proof}

This implies that $\Sigma_\rho$ is an intrinsic graph, since any coset of $\langle Y\rangle$ intersects $\Sigma_\rho$ at most once. We claim that $\Sigma_\rho$ is an intrinsic graph over $K = [-1,1]\times \{0\} \times \R\subset V_0$. 
By \eqref{eq:point-slope},
$$\Pi([(- 1, 2z_1, z_1) (1,-2z_2,z_2)]) = \{(x,0,g_{z_1,z_2}(x))\mid x\in \R\}$$
where
\begin{multline}\label{eq:gz1z2}
  g_{z_1,z_2}(x) = 2 z_1 - 2z_1 (x+1) + \frac{z_1+z_2}{2} (x+1)^2 = \frac{z_1}{2}\left((x+1)^2 - 4 (x+1) + 4\right) + \frac{z_2}{2}(x+1)^2 \\
  = \frac{z_1}{2}(x-1)^2 + \frac{z_2}{2}(x+1)^2.
\end{multline}
Therefore, $\Pi(F_\rho(x,z)) = (x, 0 , g_{z,\rho(z)}(x))$. Since $\rho(z)$ is surjective, $z\mapsto g_{z,\rho(z)}(x)$ is surjective for any $x\in [-1,1]$. We conclude that $\Pi(\Sigma_\rho)=K$.

By Lemma~\ref{lem:non-unique-monotone}, the epigraph $\Sigma_\rho^+$ is monotone in $U$. Proposition~\ref{prop:monotone-implies-min} implies that $\Sigma_\rho$ is area-minimizing in $U$. This proves Theorem~\ref{thm:non-unique}. 

Finally, we calculate the area of $\Sigma_\rho$; since $\Sigma_\rho$ is area-minimizing in $U$, the area of $F_\rho([-1,1]\times[a,b])$ should depend only on $a$, $b$, $\rho(a)$ and $\rho(b)$.
Suppose that $\rho$ is Lipschitz.
By the area formula \eqref{eq:area-formula}, for any $a<b$,
\begin{multline*}
  \area F_\rho([-1,1]\times[a,b]) = \int_a^b \int_{-1}^1 \sqrt{1+(z+\rho(z))^2} \cdot \big|J[\Pi\circ F_\rho](x,z)\big|\ud x \ud z\\
  =\int_a^b \sqrt{1+(z+\rho(z))^2} j(z)\ud z,
\end{multline*}
where $j(z)=\int_{-1}^1 \big|J[\Pi\circ F_\rho](x,z)\big|\ud x$. By \eqref{eq:gz1z2}, we have
$$j(z) = \int_{-1}^1 \partial_z[g_{z,\rho(z)}(x)] \ud x 
= \int_{-1}^1 \frac{1}{2}(x-1)^2 + \frac{\rho'(z)}{2}(x+1)^2\ud x = \frac{4}{3}(1+\rho'(z)),$$
so, by substitution,
$$\area F_\rho([-1,1]\times[a,b]) = \frac{4}{3} \int_a^b \sqrt{1+(z+\rho(z))^2}(1+\rho'(z)) \ud z = \frac{4}{3} \int_{a+\rho(a)}^{b+\rho(b)} \sqrt{1+m^2} \ud m.$$

\section{Conjectural minimizers}\label{sec:competitors}

In this section, we construct the surfaces shown in Figure~\ref{fig:conj-surfs}, which we conjecture to be area-minimizing or energy-minimizing competitors for $\BP_u$.
We first construct the family of conjectural energy minimizers.
Given an intrinsic Lipschitz graph $\Gamma_f$ such that $f$ is defined on an open subset $U\subset V_0$, we define the \emph{intrinsic Dirichlet energy} of $f$ on $U$ by $E_U(f)=\frac{1}{2} \int_U (\nabla_ff)^2\ud \mu$, where $\mu$ is Lebesgue measure on $V_0$. We say that $f$ is \emph{energy-minimizing} on $U$ if, for any $r>0$, we have $E_{U\cap B(\0,r)}(f)\le E_{U\cap B(\0,r)}(g)$ for any intrinsic Lipschitz function $g$ such that $g-f\in C^0_c(U\cap B(\0,r))$. 

An intrinsic Lipschitz graph is \emph{harmonic} if it is a critical point of the energy with respect to contact deformations, which are smooth deformations of $\HH$ that preserve the horizontal distribution. These graphs were studied in \cite{YoungHGraphs}. Harmonic graphs can often be written as unions of horizontal lines which meet along horizontal curves. These graphs satisfy a slope condition; if two horizontal segments intersect on a horizontal curve, then the slope of the curve is the average of the slopes of the horizontal segments.

Let $W_u:=\BP_u\cap \{(x,y,z)\mid |x|\le 1\}$ and let $\partial_\pm W_u=\BP_u\cap \{x=\pm1\}$ be the two curves bounding $W_u$. We will construct a harmonic intrinsic graph $\Sigma_u^h$ which is bounded by $\partial W_u$ and ruled away from two singular curves; we conjecture that $\Sigma_u^h$ is an energy-minimizing filling of $\partial W_u$. 

The idea behind the construction of $\Sigma_u^h$ is that there is a horizontal segment $\lambda=[(-1,-u,-\frac{u}{2}), (1,-u,\frac{u}{2})]$ connecting $\partial_-W_u$ to $\partial_+W_u$. This separates $\partial W_u$ into two parts: one consisting of $\lambda$ and the upper portions of $\partial_\pm W_u$ and one consisting of $\lambda$ and the lower portions of $\partial_\pm W_u$. We will fill each part by a harmonic intrinsic graph.

\begin{figure}
  \def\svgwidth{.5\textwidth}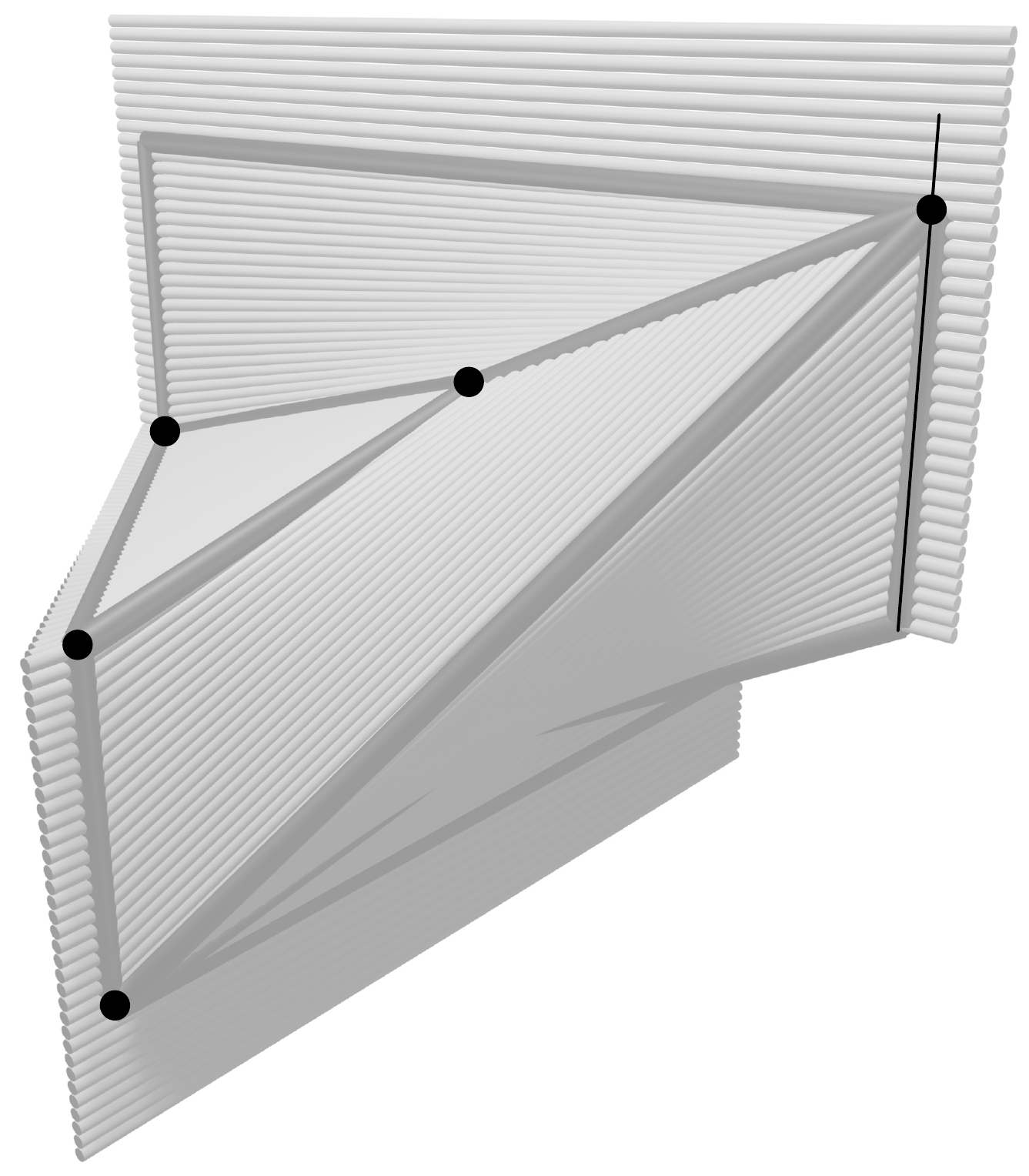
  \caption{\label{fig:energy-schematic} A schematic of $\Sigma^h_u$ with important features labeled.}
\end{figure}

Let $\alpha=\alpha(u)$ be the curve marked by a thick line in Figure~\ref{fig:energy-schematic}. This is the curve made up of five segments $\alpha_1,\dots, \alpha_5$,
\newcommand{\seg}[1]{\stackrel{#1}{\text{---}}}
\begin{multline*}
\alpha
=(-1,u,\infty) \seg{\alpha_1} (-1,u,0) \seg{\alpha_2} (-1,-u,0) \seg{\alpha_3} \left(-1,-u, -\frac{u}{2}\right) \seg{\alpha_4} \left(1,-u,\frac{u}{2}\right) \seg{\alpha_5} (1,-u,\infty).
\end{multline*}
(That is, $\alpha=\alpha_1\cup\dots\cup \alpha_5$, where $\alpha_1$ is the vertical ray pointing upward from $(-1,u,0)$, $\alpha_2=[(-1,u,0), (-1,-u,0)]$, and so on.)
Here, $\alpha_1$, $\alpha_3$, and $\alpha_5$ are vertical, $\alpha_4$ is horizontal, and $\alpha_2$ is neither.

The formula for the graph filling $\alpha$ is complicated, so we describe it in terms of its horizontal foliation.
Let $q_1=(0,-\frac{u}{2},\frac{u}{4})$. For any $y\in [-u,u]$, we have $q_1\cdot (-1,y+\frac{u}{2},0) = (-1,y,0)$, so there is a horizontal line from $q_1$ to any point in $\alpha_2$. Let $q_2=(1,-u,\frac{u}{2})$; then $q_1\cdot (1, -\frac{u}{2},0)=q_2$, so $q_1$ and $q_2$ are connected by a horizontal segment. The segment $[q_1,q_2]$ will be the characteristic set of $\Sigma^h_u$.

We write $\Sigma^h_u$ as a union of three families of horizontal segments, labeled (1), (2), and (3) in Figure~\ref{fig:energy-schematic}.
\begin{enumerate}
\item
  Segments connecting $q_1$ to the points of $\alpha_2$.
\item For each point $p\in [q_1,q_2]$, segments connecting $p$ to a point on $\alpha_1$ and a point on $\alpha_3$.
\item Segments with slope $-u$ connecting $(-1,u,z)$ to $(1,-u,z)$ for $z\in [\frac{u}{2},\infty)$. 
\end{enumerate}
For each $t\in [0,1]$, the second family contains two horizontal segments connecting $q_1\cdot (1, -\frac{u}{2},0)^t$ to $\alpha_1$ and $\alpha_3$. The projections of these segments to $\mathsf{A}$ connect $(t,-\frac{u}{2}(t+1))$ to $(-1,\pm u)$, so they have slope $-\frac{u}{2}\pm \frac{u}{t+1}$, while the characteristic nexus of $\Sigma^h_u$ has slope $-\frac{u}{2}$. That is, $\Sigma^h_u$ satisfies the slope condition in \cite{YoungHGraphs}. By Remark~3.10 of \cite{YoungHGraphs}, $\Sigma_u$ is an intrinsic Lipschitz graph, so $\Sigma^h_u$ is a harmonic graph. The full surface in Figure~\ref{fig:conj-surfs} is a union $\Sigma^h_u\cup s_{-1,1}(\Sigma^h_u)$ of two copies of $\Sigma^h_u$, and we conjecture that this is the minimal-energy surface filling $\partial W_u$.

Note that for any $u>0$, we can write $\Sigma^h_u=s_{1,u}(\Sigma^h_1)$. Stretch automorphisms send energy minimizers to energy minimizers \cite[3.2]{YoungHGraphs}, so proving that $\Sigma^h_1$ minimizes energy would imply that all of the $\Sigma^h_u$'s minimize energy.

Now we construct conjectural area-minimizing surfaces $\Sigma_u$ filling $\partial W_u$. These look similar to the harmonic surfaces constructed above; the main differences are that the characteristic set is the horizontal lift of a hyperbola rather than a horizontal line segment and that $\Sigma_u$ contains a family of horizontal segments of slope $0$, all parallel to $\alpha_4$ (labeled (3) in Figure~\ref{fig:schematic}).

\begin{figure}
  \def\svgwidth{.5\textwidth}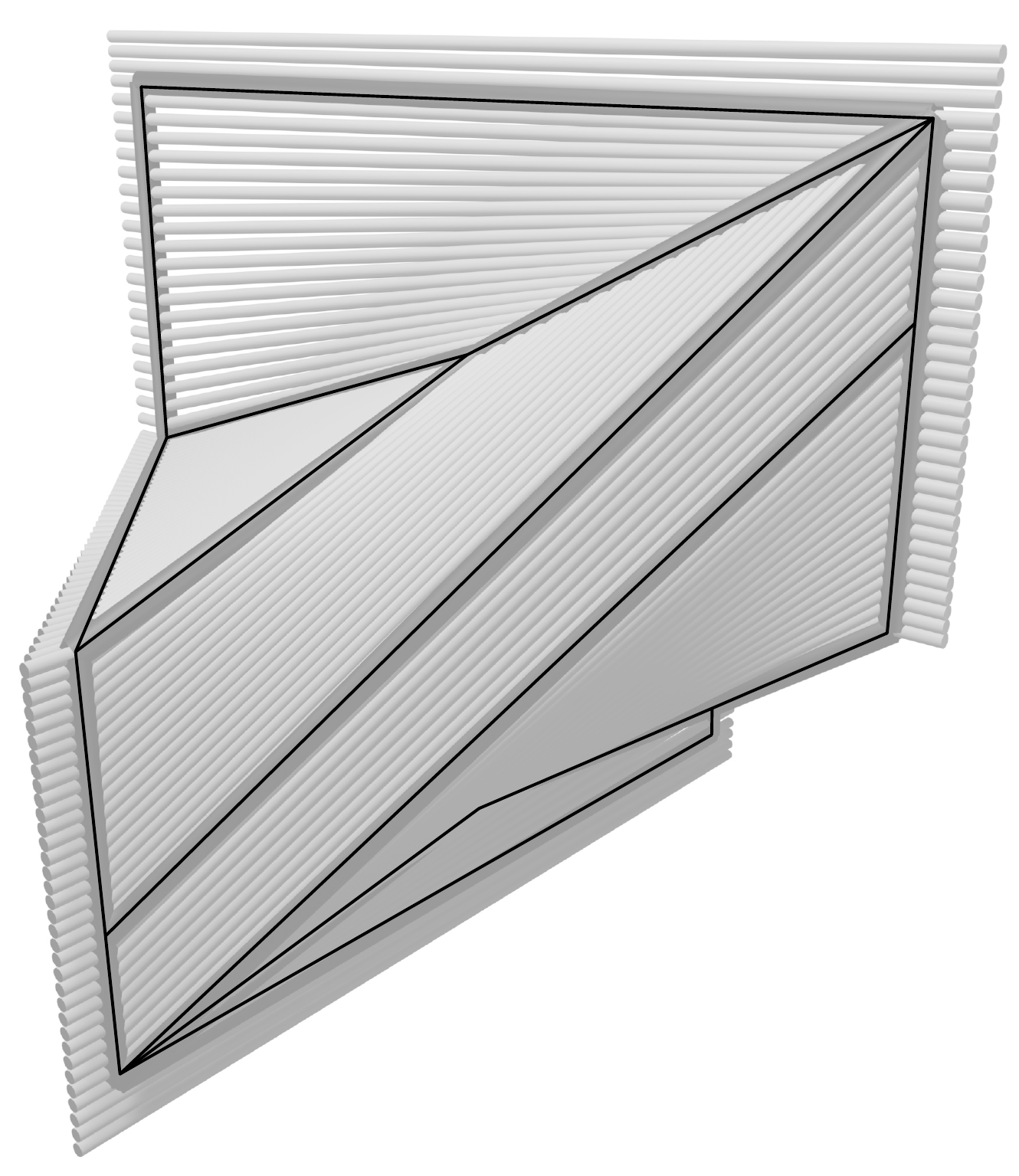
  \caption{\label{fig:schematic} A schematic of $\Sigma_u$, with important features labeled.}
\end{figure}

We construct $\Sigma_u$ out of area-minimizing $Z$--graphs and vertical rectangles.
As described in \cite{ChengHwangYang}, an area-minimizing $Z$--graph $\Sigma$ can be written as a union of horizontal line segments. These segments can meet along horizontal curves in $\Sigma$ as long as the tangent line to the horizontal curve bisects the angle between the horizontal line segments. Such singularities form the characteristic set of the surface.

\begin{figure}
  \begin{center}
    \def\svgwidth{.5\textwidth}
\begingroup%
  \makeatletter%
  \providecommand\color[2][]{%
    \errmessage{(Inkscape) Color is used for the text in Inkscape, but the package 'color.sty' is not loaded}%
    \renewcommand\color[2][]{}%
  }%
  \providecommand\transparent[1]{%
    \errmessage{(Inkscape) Transparency is used (non-zero) for the text in Inkscape, but the package 'transparent.sty' is not loaded}%
    \renewcommand\transparent[1]{}%
  }%
  \providecommand\rotatebox[2]{#2}%
  \newcommand*\fsize{\dimexpr\f@size pt\relax}%
  \newcommand*\lineheight[1]{\fontsize{\fsize}{#1\fsize}\selectfont}%
  \ifx\svgwidth\undefined%
    \setlength{\unitlength}{400bp}%
    \ifx\svgscale\undefined%
      \relax%
    \else%
      \setlength{\unitlength}{\unitlength * \real{\svgscale}}%
    \fi%
  \else%
    \setlength{\unitlength}{\svgwidth}%
  \fi%
  \global\let\svgwidth\undefined%
  \global\let\svgscale\undefined%
  \makeatother%
  \begin{picture}(1,1)%
    \lineheight{1}%
    \setlength\tabcolsep{0pt}%
    \put(0,0){\includegraphics[width=\unitlength,page=1]{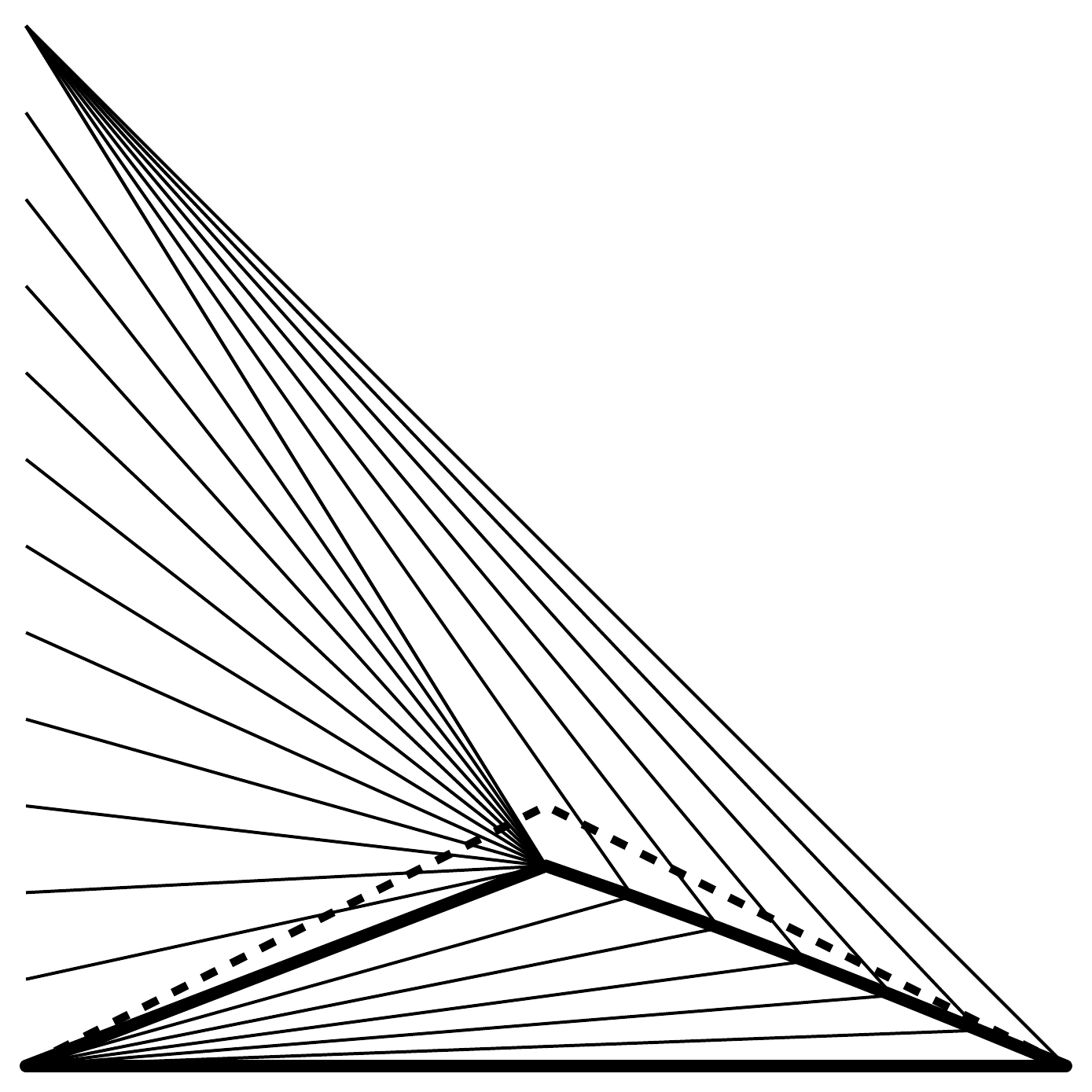}}%
    \put(-0.02053429,0.96369204){\color[rgb]{0,0,0}\makebox(0,0)[rt]{\lineheight{1.25}\smash{\begin{tabular}[t]{r}$(-1,u)$\end{tabular}}}}%
    \put(-0.01678429,0.01178127){\color[rgb]{0,0,0}\makebox(0,0)[rt]{\lineheight{1.25}\smash{\begin{tabular}[t]{r}$(-1,-u)$\end{tabular}}}}%
    \put(0,0){\includegraphics[width=\unitlength,page=2]{areaMinAOverlay.pdf}}%
    \put(0.59052023,0.11330196){\color[rgb]{0,0,0}\makebox(0,0)[lt]{\lineheight{1.25}\smash{\begin{tabular}[t]{l}$\gamma$\end{tabular}}}}%
    \put(1.01144657,0.01178127){\color[rgb]{0,0,0}\makebox(0,0)[lt]{\lineheight{1.25}\smash{\begin{tabular}[t]{l}$(1,-u)$\end{tabular}}}}%
    \put(0,0){\includegraphics[width=\unitlength,page=3]{areaMinAOverlay.pdf}}%
    \put(-0.02053429,0.49638033){\color[rgb]{0,0,0}\makebox(0,0)[rt]{\lineheight{1.25}\smash{\begin{tabular}[t]{r}$(-1,0)$\end{tabular}}}}%
  \end{picture}%
\endgroup%

  \end{center}
  \caption{\label{fig:areaMinDiagram} The projections to $\mathsf{A}$ of the horizontal segments that make up $\Sigma_u$.}
\end{figure}

Figure~\ref{fig:areaMinDiagram} shows the projection to the $xy$--plane $\mathsf{A}$ of the horizontal segments that make up $\Sigma_u$.
The characteristic set of $\Sigma_u$, marked by one of the thick lines in Figure~\ref{fig:areaMinDiagram},
is a lift of a segment of the hyperbola in $\mathsf{A}$ with foci at $(-1,\pm u)$ that passes through the point $(1,-u)$. We call this hyperbola $\gamma$; one can calculate that $\gamma$ satisfies the equation
$$\frac{y^2}{(\sqrt{u^2+1}-1)^2} - \frac{(x+1)^2}{u^2-(\sqrt{u^2+1}-1)^2}=1.$$
Let
$$a=-\left(\sqrt{u^2+1}-1\right)\sqrt{\frac{1}{u^2-(\sqrt{u^2+1}-1)^2}+1}$$
so that $(0,a)\in \gamma$. Let $L=[(-1,0), (1,-u)]$ be the long dashed line in Figure~\ref{fig:areaMinDiagram}. The segment of $\gamma$ from $(0,a)$ to $(1,-u)$ lies below $L$, so $a<-\frac{u}{2}$.

We construct $\Sigma_u$ by lifting the lines in Figure~\ref{fig:areaMinDiagram} to $\HH$. Let $p_1=(0,a,\frac{a}{2})$.
Then for any $y\in [-u,u]$, we have $p_1\cdot (-1,y-a,0) = (-1,y,0)$, so the lines on the left of Figure~\ref{fig:areaMinDiagram} lift to horizontal lines from the points of $\alpha_2$ to $p_1$.
Let $\tilde{\gamma}$ be the lift of $\gamma$ that passes through $p_1=(0,a,\frac{a}{2})$, and let $b$ be such that $p_2=(1,-u,b)\in \tilde{\gamma}$. The concatenation of $[(-1,-u,0),p_1]$, $\tilde{\gamma}$, and $[p_2, p_2 X^{-2}]$ is a horizontal curve in $\Sigma_u$ that connects $(-1,-u,0)$ to $p_2 X^{-2} = (-1,-u,b-u)$ and projects to the thick curve in Figure~\ref{fig:areaMinDiagram}. The area of this curve is equal to $u-b$, and the curve is contained in the triangle with vertices $(-1,-u), (1,-u)$, and $(0,-\frac{u}{2})$ (marked by dashed lines in Figure~\ref{fig:areaMinDiagram}), so $0 < u - b < \frac{u}{2}$ and $\frac{u}{2}<b<u$. That is, the endpoint of $\tilde{\gamma}$ lies in the interior of $\alpha_3$, as illustrated in Figure~\ref{fig:schematic}.

The horizontal segments in $\Sigma_u$ can be divided into four families, labeled (1), (2), (3), and (4) in Figure~\ref{fig:schematic}:
\begin{enumerate}
\item Segments connecting $p_1$ to the points of $\alpha_2$.
\item For each point $p\in \tilde{\gamma}$, segments connecting $p$ to a point on $\alpha_1$ and a point on $\alpha_3$. The projections of these segments to $\mathsf{A}$ connect points on the hyperbola $\gamma$ to its two foci.  
\item Segments with slope $0$ connecting $(-1,-u,z-u)$ to $(1,-u,z)$ for $z\in [\frac{u}{2},b]$. 
\item Segments with slope $-u$ connecting $(-1,u,z)$ to $(1,-u,z)$ for $z\in [b,\infty)$. 
\end{enumerate}
Since the tangent line to a hyperbola at a point bisects the lines from the point to the foci of the hyperbola, this surface satisfies the angle condition in Section~7 of \cite{ChengHwangYang} and is thus an area-minimizer.
The full surface in Figure~\ref{fig:conj-surfs} is a union $\Sigma_u\cup s_{-1,1}(\Sigma_u)$ of two copies of $\Sigma_u$, and we conjecture that this is the area-minimizing surface filling $\partial W_u$.

\bibliographystyle{alphaurl}
\bibliography{bernstein}

\end{document}